\newtheorem*{theorem*}{Theorem}
\newtheorem{theorem}{Theorem}[section]
\newtheorem{lemma}[theorem]{Lemma}
\newtheorem{proposition}[theorem]{Proposition}
\newtheorem{corollary}[theorem]{Corollary}
\theoremstyle{definition}
\newtheorem{definition}[theorem]{Definition}
\newtheorem{remark}[theorem]{Remark}
\numberwithin{equation}{section}
\DeclareMathOperator{\supp}{supp}
\DeclareMathOperator{\Tr}{Tr}
\DeclareMathOperator{\Var}{Var}
\title{Global fluctuations for Multiple Orthogonal Polynomial Ensembles}
\author{Maurice Duits\footnote{Department of Mathematics, Royal Institute of Technology (KTH), Stockholm, Swe\-den. Email: duits@kth.se. Supported by the Swedish Research Council grant (VR) Grant no. 2016-05450 and the G\"oran Gustafsson Foundation.} \and Benjamin Fahs \date{}
	\footnote{Department of Mathematics, Imperial College London, London, UK. Email: bf308@ic.ac.uk. Supported by the G\"{o}ran
Gustafsson Foundation (UU/KTH) and by the Leverhulme Trust research programme grant
RPG-2018-260.} \and Rostyslav Kozhan\footnote{Department of Mathematics, Uppsala University, Uppsala, Sweden. Email: kozhan@math.uu.se.}}
\begin{document}
	\maketitle
	\begin{abstract}
		  We study the fluctuations of linear statistics with polynomial test functions for Multiple Orthogonal Polynomial Ensembles.	Multiple Orthogonal Polynomial Ensembles form an important class of determinantal point processes that include random matrix models such as the GUE with external source, complex Wishart matrices, multi-matrix models and others.

		  	Our analysis is based on the recurrence matrix for the multiple orthogonal polynomials, that is constructed out of the nearest neighbor recurrences. If the coefficients for the nearest neighbor recurrences have limits, then we show that the right-limit of this recurrence matrix is a matrix that can be viewed as  representation of a Toeplitz operator with respect to a non-standard basis. This will allow us to prove  Central Limit Theorems for linear statistics of Multiple Orthogonal Polynomial Ensembles. A particular novelty is the use of the  Baker--Campbell--Hausdorff formula to prove that the higher cumulants of the linear statistics converge to zero.
		  	
		  	 We illustrate  the main results by discussing Central Limit Theorems for the Gaussian Unitary Ensembles with external source, complex  Wishart matrices  and specializations of the Schur measure related to multiple Charlier,   multiple Krawtchouk and multiple Meixner polynomials.
	\end{abstract}
	\section{Introduction}
Let $m \in \mathbb N$ and a system of Borel measures $\{\mu_j\}_{j=1}^m$ on
$\mathbb R$ be given.
For each vector $\vec k=(k_1,\ldots, k_m) \in \mathbb Z_{\ge0}^m$, the type II  multiple orthogonal polynomial of multi-index
$\vec k$
is defined as the monic polynomial $p_{\vec k}$ of  smallest possible degree such that
\begin{equation}\label{eq:MOP}
\int p_{\vec k}(x) x^l  \ d \mu_j(x)=0, \qquad \ell=0,\ldots, k_j-1, \quad j=1, \ldots,m.
\end{equation}
We say that the multi-index $\vec k$ is \emph{normal} if the degree of $p_{\vec k}$ is $|\vec k|=k_1+\ldots +k_m$. A system of
measures $\{\mu_j(x)\}_{j=1}^m$ is called  a \emph{perfect system} if  each $\vec k \in \mathbb Z_{\ge0}^m$ is normal. 	Note that if $m=1$ then this definition reduces to that of
standard orthogonal polynomials on the real line.

Multiple orthogonal polynomials arise naturally in
several different contexts. Originally they were introduced in analytic number theory (for proving irrationality and transcendence). They also play an important role in approximation theory (Hermite-Pad\'{e} approximations). We refer to \cite{NS,Ismail} for surveys on these topics. More recently, they have appeared in  random matrix theory and integrable probability,  as they integrate the Multiple Orthogonal Polynomial Ensembles \cite{Kmop}. This is an interesting class of point processes that 
generalize the more classical Orthogonal Polynomial Ensembles \cite{konig} and includes many interesting models such as Unitary Ensembles with external source, complex Wishart matrices, products of random matrices, the two matrix model and certain specializations of the Schur processes.

\subsection{Multiple Orthogonal Polynomial Ensembles}\label{ss:MOPE}

Let us take
\begin{equation}\label{eq:orthomeasures}
d\mu_j(x) = w_j(x) \, d\mu(x), \quad j\in\{1,\ldots,m\}
\end{equation}
for some Borel measure $\mu$ on $\mathbb R$ and non-negative weight functions $w_j$. Fix a vector of non-negative integers $\vec k=(k_1,\ldots,k_m)$ and set $n=k_1+\ldots+k_m=|\vec k|$.  Then a probability measure of the form
\begin{equation} \label{eq:MOPE}
\frac{1}{Z_{\vec{k}}}\det \big( x_i^{j-1}\big)_{i,j=1}^{n}\det\big(g_j( x_i)\big) _{i,j=1}^{n} \prod_{j=1}^{n} {\rm d} \mu(x_j),
\end{equation}
where  $g_j(x)$ are the functions
\begin{equation} \label{eq:MOPEgj}
w_1(x),xw_1(x), \ldots, x^{k_{1}-1} w_1(x), \cdots,w_m(x),xw_m(x), \ldots, x^{k_m-1} w_m(x),
\end{equation}
and $Z_{\vec{k}}$ is a normalizing constant
is called a \emph{Multiple Orthogonal Polynomial Ensemble (MOPE)}.
It should be noted that we assume here that \eqref{eq:MOPE} is non-negative, which is an implicit assumption on the weight functions $w_j$.  In the language of \cite{Bbio} we see that \eqref{eq:MOPE} defines a biorthogonal ensemble, where one of the family of functions consists of polynomials. It is therefore clear that \eqref{eq:MOPE} is a determinantal point process \cite{Jdet}. The name, MOPE, for these processes  was proposed in \cite{Kmop} and is explained by the fact that the correlation kernel can be expressed in terms of multiple orthogonal polynomials. The correlation kernel will however not play a role in the present paper.

A classical example of a MOPE is the Gaussian Unitary Ensemble with external source.  This well-studied model was introduced in \cite{BH}. In the model, we equip the space of $n \times n$ Hermitian matrices with  distribution proportional to
\begin{equation} \label{eq:externalsource}
	\sim e^{-n \Tr \big(\tfrac12 M^2-HM\big)} \ d M,
\end{equation}
where $H$ is a diagonal matrix and $dM$ is the Lebesgue measure on the entries. It turns out \cite{BK,BK2} that the eigenvalues of $M$ form a Multiple Orthogonal Polynomial Ensemble where $m$ is the number of distinct values along the diagonal with multiplicities $k_j$ so that $k_1+\ldots +k_m=n$. The weight functions are the Gaussian
	 $$
	 	w_j(x)=e^{-n\big(\tfrac12 x^2-h_jx\big)},
	 $$
and $\mu$ is the Lebesgue measure on $\mathbb R$.  The polynomials in this case are called multiple Hermite polynomials.

Another example is the complex Wishart Ensemble. Given a diagonal  matrix  $\Sigma=\mathrm{diag}(\sigma_1,\ldots,\sigma_n)$ with $\sigma_j>0$, one studies the eigenvalues of an $n \times n$  matrix $M$ chosen randomly from
\begin{equation}
	\sim	(\det M)^{\alpha} e^{-n \Tr M \Sigma }  \ d M,
\end{equation}
on the space of positive definite hermitian matrices (for $\alpha \in \mathbb N$ the eigenvalues of $M$ are the square of the singular values of a $n \times (n+\alpha)$ where the columns are independent and identically distributed as a complex multivariate normal distribution with   covariance matrix $\Sigma^{-1}$). Similarly to the Gaussian Unitary Ensemble with external source, the eigenvalues of $M$ form a MOPE, where $m$ is the number of distinct values along the diagonal of $\Sigma$ and the $k_j$'s are their multiplicities. The multiple polynomials that arise here are called the multiple Laguerre polynomials of the second kind \cite{BK,Kmop}.

There are many other examples, some of which we address in Section~\ref{sec:examples}. For now, we only briefly mention some important ones and refer to \cite{Kmop} for an extensive survey and a long list of references. In the Hermitian two matrix model, the biorthogonal polynomials  can be characterized as multiple orthogonal polynomials \cite{DKM,KMcL}.  MOPE's also appear in the recent activity around the singular values of products of random matrices \cite{KZ}. The Borodin--Muttalib Ensembles \cite{Bbio,KM} with an integer (or, by duality, the reciprocal of an integer)  parameter $\theta$ lead to multiple orthogonal polynomials with the number of weights $\theta$.

 Lesser known examples are several discrete multiple orthogonal polynomial ensembles that are particular specializations of the Schur measure and arise in the discrete integrable probability models, such as  last passage percolation, discrete interacting particles systems and random tilings of planar domains. Here we briefly mention how the multiple Meixner ensemble arises as a specialization of the Schur measure \cite{OK}. Later in Section \ref{sec:examples} we will also discuss Markov chains that lead to the multiple Charlier, multiple Krawtchouk and multiple Meixner polynomials and  are specializations of the Schur process \cite{OR}.  These are also specializations and marginals  of a more general Markov chain introduced  in  \cite{BF}. In fact, the multiple Charlier model with two weights was already studied  in \cite{Duits1} as an example of the model in \cite{BF},  but the connection to multiple Charlier polynomials was not mentioned there. For completeness, we will include a short discussion on Schur polynomials and the Schur process in Section \ref{sec:Schur}.

  We recall that for $a=(a_1,a_2,\ldots, a_n){\in\mathbb{R}_{\ge0}^n}$ 
  and  a partition $\lambda= (\lambda_1,\lambda_2,\ldots, \lambda_n) {\in \mathbb{Z}_{\ge 0}^n}$ with $\lambda_j \geq \lambda_k$ for $j \leq k$,   the Schur polynomial \cite[I.3]{MacDonald} is defined as
 \begin{equation} \label{eq:Schurpol}
 s_\lambda(a)= \frac{\det (a_{k}^{\lambda_j+n-j})_{j,k=1}^n}{ \det (a_{k}^{n-j})_{j,k=1}^n}.
 \end{equation}
 Note that this is a symmetric multivariate polynomial in the $a_k$'s.
Given $a,b{\in\mathbb{R}_{\ge0}^n}$, we can then define  the Schur measure as the probability measure  on $\lambda= (\lambda_1,\lambda_2,\ldots, \lambda_n)$ with $\lambda_j \geq \lambda_k$ for $j \leq k$,  by
 \begin{equation}\label{eq:preSchurmeasure}
\mathbb P(\lambda)= s_\lambda(a) s_\lambda(b) \prod_{j,k=1}^n\left(1-a_j b_k\right) .
 \end{equation}
 By taking the  limits $a_k\to 1$, $b_k\to q \in (0,1)$ and using l'H\^opital's rule, we see that $s_\lambda(a) s_\lambda(b)$ converges to a product of two Vandermonde determinants in the variables $x_j=\lambda_j-j$ and a factor $q^{x_1+ \ldots + x_n}$ (up to a constant). This is called the Meixner ensemble in the literature since the corresponding orthogonal polynomials that integrate this model are the Meixner polynomials. If, instead, one takes the limits $a_k\to 1$ and $b_k\to q_j$ such that there are precisely $m$ different values  $q_1, \ldots, q_m$ for these limits with multiplicities $k_1,\ldots,k_m$ such that $k_1+\ldots +k_m=n$, then $s_\lambda(a)$ converges again to a Vandermonde determinant but $s_\lambda(b)$ converges, up to a constant, to $\det g_j(x_i)$ where $g_j$ are as in \eqref{eq:MOPEgj}  with $w_j(x)=q_j^x$ and multiplicities $k_j$ (cf. Lemma \ref{lem:schurtoMOP}). Thus, the limiting Schur measure is an MOPE of the form  \eqref{eq:MOPE}. The corresponding multiple orthogonal polynomials are called multiple  Meixner polynomials of the first kind \cite{HV}, see also Section~\ref{ss:Meixner} below.

\subsection{Universality of global fluctuations}

The prime interest of the current paper is to investigate the global behavior of Multiple Orthogonal Polynomial Ensembles. A natural way to study a point process is via its linear statistics. Given a random configuration of points  $\{x_j\}_{j=1}^n$ and a smooth function $f:\mathbb R \to \mathbb R$, the linear statistic $X_n(f)$ is defined as
	$$X_n(f)= \sum_{j=1}^n f(x_j).$$
	
	The first natural question is whether there is a  limiting distribution for the random points $x_j$. In other words, does there exists a $\rho(x)$ such that
	$$
		\frac{1}{n} \mathbb E [X_n(f)] \to  \int f(x) \rho (x) dx,
	$$
	as $n\to \infty$? In many determinantal point processes in random matrix theory and integrable probability this is indeed the case and identifying $\rho(x)$ is an important problem. Naturally, $\rho$  depends on the parameters of the model at hand (in the case of MOPE, these parameters are the weights $w_j$, measure $\mu$, and the indices $k_j$).  It was proved by Hardy in  \cite{Hardy}  that for MOPE's, $\rho(x)$  is the  limiting zero distribution for the multiple orthogonal polynomials $p_n$, and that convergence even holds almost surely (under certain conditions on the recurrence coefficients for the multiple orthogonal polynomials). The characterization of the  limiting zero distribution of $p_n$ is a classical problem for multiple orthogonal polynomials.  This limiting distribution can often be given in terms of the solution of a vector equilibrium problem, which is a good starting point for an asymptotic analysis  of the multiple orthogonal polynomials via Riemann-Hilbert techniques in special cases. We refer to \cite{Kmop,VAGK} for general discussions.

	 In this paper, we will address the phenomenon that the global fluctuations  $X_n(f) -\mathbb E X_n(f)$ are universal. 	One of the remarkable facts, is that for many random processes in random matrix theory the variance of the linear statistic $X_n(f)$ does not grow with $n$ for  functions $f$ that are sufficiently smooth (jump discontinuities are known to contribute with terms that grow logarithmically in $n$). In fact, it has been proved in a variety of models that for sufficiently smooth $f$ there is limit
	$$\lim_{n\to \infty} \Var X_n(f)=\sigma_f^2,$$
	for some $\sigma_f^2>0$.  The models in which this limit holds have the property that the random points accumulate on a single interval. If they accumulate on multiple disjoint intervals, then the variance may have quasi-periodic behavior (see \cite{BD} for a discussion and further references). Note that if the points $x_j$ were to  be independent, then the variance would grow linearly in $n$. That the variance does not grow is a result of the  repulsion between the points that causes rigidity in the location of the points. Moreover, the limiting variance $\sigma_f^2$ is universal and only depends on a few general properties of the model. Even more is true:  for many models, in which the points accumulate on a single interval, it has been established that, as $n \to \infty$,
	\begin{equation}\label{eq:CLT}
	X_n(f)-\mathbb E X_n(f) \to N(0,\sigma_f^2),
	\end{equation}
	in distribution. This Central Limit Theorem (CLT) together with the universality of $\sigma_f$, is  sometimes referred to as global universality and has been proved in a long list of important models. We will not attempt to give a full list of references,  but only mention some. One of the first CLT's in random matrix theory is by Jonsson  who proved a  CLT  for Wishart Matrices \cite{jonsson}. In a seminal work \cite{Jduke}, Johansson proved \eqref{eq:CLT} for  general (continuous) $\beta$-ensembles on $\mathbb R$ with polynomial potentials and certain smoothness conditions on $f$. The case of discrete $\beta$-ensembles was dealt with by Borodin, Gorin and Guionnet \cite{BGG}.  For the special value $\beta=2$, the continuous and discrete log-gases are special cases of Orthogonal Polynomial Ensembles and, consequently, of MOPE's. In \cite{BD} a general CLT for polynomial test functions was proved for a class of biorthogonal ensembles, including Orthogonal Polynomial Ensembles as important examples. The goal of the present paper is to prove a general CLT in \eqref{eq:CLT} for MOPE's, inspired by the approach introduced in \cite{BD}.  An alternative approach to CLT's for MOPE's would be to use the  Schur-generating function approach recently developed  by Bufetov and Gorin \cite{BG}. It would be interesting to see how the techniques and results of the present paper would fit into their framework.

\subsection{Nearest neighbor recurrences}

 The main tool that we will use is the nearest neighbor recurrence relations for multiple orthogonal polynomials.  Similarly to the case of standard orthogonal polynomials on the real line, for each family of multiple orthogonal polynomials (from a perfect system) there exist coefficients $ \{a_{\vec k,j}\}_{\vec k \in \mathbb Z_{\ge0}^m}$ and $\{b_{\vec k,j}\}_{\vec k\in \mathbb Z_{\ge0}^m}$ such that
\begin{equation}\label{eq:nearest_neighbor}
x p_{\vec k}(x) =
p_{\vec k + \vec e_\ell}(x)+ b_{\vec k,\ell} p_{\vec k}(x)+\sum_{j=1}^m a_{\vec k,j} p_{\vec k-\vec e_j}(x)
\end{equation}
holds for every $1\le \ell \le m$, where $\{\vec e_j\}_{j=1}^m$ denotes the standard canonical basis of $\mathbb R^m$. Equations~\eqref{eq:nearest_neighbor} are called the \emph{nearest neighbor recurrence relations}, see~\cite{Ismail, VanAssche11}. If we subtract two of these relations with $\ell=r$ and $\ell=s$, we arrive at the equality

\begin{equation}\label{eq:consistency}
p_{\vec k + \vec e_r}(x)=p_{\vec k + \vec e_s}(x)+ (b_{\vec k,s}-b_{\vec k,r} ) p_{\vec k}(x),
\end{equation}
for $1 \leq  r,s  \leq m$. Note that for $m=1$ these relations are obsolete, so they are a particular feature for multiple orthogonal polynomials.

In concrete examples, the coefficients may be explicitly computed. There are families of multiple orthogonal polynomials corresponding to all of the classical orthogonal polynomials (Hermite, Laguerre, Jacobi,  Charlier,\\  Krawtchouk, Meixner, Hahn), and for these, there are explicit expressions in the literature \cite{ACV,BK,HV,NS,NV, Ismail,VanAssche11}.

In general, the map from the system of measures $\{\mu_j(x)\}_{j=1}^m$ to its nearest neighbor recurrence coefficients is highly non-trivial. There is an important class of measures though for which the coefficients are known to have a simple limiting behavior. \begin{definition}
We say that a perfect system $\{\mu_j(x)\}_{j=1}^m$ is in the  \emph{multiple Nevai class} if, for every $j\in\{1,\ldots,m\}$ and every $\vec \nu=[0,1]^m$ with $|\vec \nu|=1$, there exist $a_j(\vec \nu)$ and $b_j(\vec \nu)$ such that
\begin{equation}\label{eq:multipleNevai}
\begin{cases}
a_{\vec k ,j} \to a_j(\vec \nu),
\\
b_{\vec k,j} \to b_j(\vec \nu),
\end{cases}
\end{equation}
if $\vec k /|\vec k| \to \vec \nu$ as $|\vec  k| \to \infty$.
\end{definition}
As was shown in~\cite[Thm 4.6]{ADY}, multiple Nevai class in particular includes systems for which $\supp\,\mu_j$ are pairwise disjoint intervals (Angelesco systems) 
with $\mu_j$ absolutely-continuous whose a.c. densities $\tfrac{d\mu_j}{dx}$ are non-vanishing and analytic in a neighborhood of $\supp\,\mu_j$. This class is conjectured to be much wider, allegedly containing Angelesco systems with the mere conditions $\tfrac{d\mu_j}{dx}>0$ a.e. on $\supp\,\mu_j$. 
The computation of the constants $\{a_j(\vec \nu)\}_{j=1}^m$ and $\{b_j(\vec \nu)\}_{j=1}^m$ in~\eqref{eq:multipleNevai} was addressed in
\cite{ADY} and
\cite{AptKoz}.

It should be noted that the behavior \eqref{eq:multipleNevai} will typically not hold for measures that have unbounded support. Instead, an alternative formulation of \eqref{eq:multipleNevai} is expected to be true for a wide class of measures with unbounded support. Just as in the case of orthogonal polynomials (e.g., the Hermite polynomials), to get reasonable limiting expressions for the multiple orthogonal polynomials and their features, we need an appropriate rescaling. For example, for the Multiple Hermite Polynomials related to the random matrix model \eqref{eq:externalsource} we have rescaled the Gaussians to be $e^{-n (x-h_j)^2/2}$. This has the consequence that the eigenvalues accumulate on a union of bounded intervals.  By formula (2.2) in \cite{BK},
\begin{equation}\label{eq:multiplehermiterec}
x p_{\vec k}(x)= p_{\vec k+\vec e_\ell}(x)+h_\ell p_{\vec k} (x)+ \sum_{j=1}^m \frac{k_j }{n}p_{\vec k-\vec e_j}(x),
\end{equation}
for $|\vec k| \geq 0$. We see that \eqref{eq:multipleNevai} does not hold directly,  but we do obtain a limit if we let $|\vec k| \to \infty$ and $n \to \infty$ simultaneously.

To accommodate this type of rescaling, we allow the measures to depend on a parameter $n$ and write $p_{\vec k}^{(n)}$, $a_{\vec k,j}^{(n)}$ and $b_{\vec k,\ell}^{(n)}$ to indicate the dependence on $n$. In the orthogonal polynomial literature one often speaks of \emph{varying measures}. Then for a wide class of measures it is expected that the following limits hold
\begin{equation}\label{eq:multipleNevaivaryingPre}
\begin{cases}
a_{\vec k ,j}^{(n)} \to a_j(\vec \nu),
\\
b_{\vec k,j}^{(n)} \to b_j(\vec \nu),
\end{cases}
\end{equation}
if $\vec k /|\vec k| \to \vec \nu$ and $|\vec k|/n \to 1$ as $|\vec  k|,n  \to \infty$. The limits \eqref{eq:multipleNevai} or \eqref{eq:multipleNevaivaryingPre} are easily verified for the generalizations of the classical orthogonal polynomials. We discuss some examples in Section \ref{sec:examples}.

\subsection{Main results}

Our goal is to show that one can formulate a general CLT for MOPE's. In discussing the limiting behavior $X_n(f)$ as $n \to \infty$ we have to clarify how precisely we want to take the limit. Indeed, for each $n$ we need to choose a corresponding vector $\vec k$.  The choice we make is the following: for each $n \in \mathbb Z_{\geq 0}$ we take a vector $\vec k_n \in \mathbb Z_{\geq 0}^m$ such that
\begin{equation}\label{eq:paths}
\left.
\begin{aligned}
& |\vec k_n|=n;  \\
& \vec k_{n+1}= \vec k_n + \vec e_{j_n} \mbox{ for some } j_n \in \{1, \ldots, m\}; \\
& \mbox{there exists a } \vec \nu \in [0,1]^m, |\vec \nu|=1 \mbox{ with } \vec k_n /n \to\vec  \nu \mbox{ as } n \to \infty.
\end{aligned}
\right\}
\end{equation}

One can think of $\{\vec k_n\}_{n=0}^\infty$ as a path on the lattice $\mathbb Z_{\ge0}^m$,
that connects $(0,\ldots,0)$ to infinity in the direction of $\vec \nu$, see Fig.~1 for an example.


\begin{figure}
		\begin{center}
		\begin{tikzpicture}[scale=.8]
\draw[dashed] (0,0) -- (6,8.2);
		 	 \draw[->, thick] (-.5,0) -- (8,0);
		 	 \draw[->, thick] (0,-.5) -- (0,8);
		 	 \draw[help lines] (.5,0) -- (.5,8);
		 	 \draw[help lines] (1,0) -- (1,8);
		 	 \draw[help lines](1.5,0) -- (1.5,8);
		 	   \draw[help lines](2,0) -- (2,8);
		 	  \draw[help lines] (2.5,0) -- (2.5,8);
		 	     \draw[help lines] (3,0) -- (3,8);
		 	  \draw[help lines](3.5,0) -- (3.5,8);
		  \draw[help lines] (4,0) -- (4,8);
		 	   \draw[help lines](4.5,0) -- (4.5,8);
		 	      \draw[help lines] (5,0) -- (5,8);
		 	     \draw[help lines](5.5,0) -- (5.5,8);
		 	     \draw[help lines](6.0,0) -- (6.0,8);
		 	     \draw[help lines](6.5,0) -- (6.5,8);
		 	     \draw[help lines](7.0,0) -- (7.0,8);
		 	     \draw[help lines](7.5,0) -- (7.5,8);

    	 	 \draw[help lines] (0.75,0) -- (0.75,8);
        	 	 \draw[help lines] (0.25,0) -- (0.25,8);
		 	 \draw[help lines] (1.75,0) -- (1.75,8);
		 	 \draw[help lines](1.25,0) -- (1.25,8);
		 	   \draw[help lines](2.75,0) -- (2.75,8);
		 	  \draw[help lines] (2.25,0) -- (2.25,8);
		 	     \draw[help lines] (3.75,0) -- (3.75,8);
		 	  \draw[help lines](3.25,0) -- (3.25,8);
		  \draw[help lines] (4.75,0) -- (4.75,8);
		 	   \draw[help lines](4.25,0) -- (4.25,8);
		 	      \draw[help lines] (5.75,0) -- (5.75,8);
		 	     \draw[help lines](5.25,0) -- (5.25,8);
		 	     \draw[help lines](6.75,0) -- (6.75,8);
		 	     \draw[help lines](6.25,0) -- (6.25,8);
		 	     \draw[help lines](7.75,0) -- (7.75,8);
		 	     \draw[help lines](7.25,0) -- (7.25,8);

		 	   \draw[help lines](0,.5) -- (8,.5);
		 	    \draw[help lines](0,1) -- (8,1);
		 	      \draw[help lines] (0,1.5) -- (8,1.5);
		 	     \draw[help lines] (0,2) -- (8,2);
		 	      \draw[help lines](0,2.5) -- (8,2.5);
		 	     \draw[help lines](0,3) -- (8,3);
		 	    \draw[help lines](0,3.5) -- (8,3.5);
		 	   \draw[help lines](0,4) -- (8,4);
		 	   \draw[help lines] (0,4.5) -- (8,4.5);
		 	   \draw[help lines] (0,5) -- (8,5);
		 	    \draw[help lines] (0,5.5) -- (8,5.5);
		 	    \draw[help lines] (0,6.0) -- (8,6.0);
		 	    \draw[help lines] (0,6.5) -- (8,6.5);
		 	    \draw[help lines] (0,7.0) -- (8,7.0);
		 	    \draw[help lines] (0,7.5) -- (8,7.5);

		 	   \draw[help lines](0,0.75) -- (8,0.75);
		 	   \draw[help lines](0,0.25) -- (8,0.25);
		 	    \draw[help lines](0,1.75) -- (8,1.75);
		 	      \draw[help lines] (0,1.25) -- (8,1.25);
		 	     \draw[help lines] (0,2.75) -- (8,2.75);
		 	      \draw[help lines](0,2.25) -- (8,2.25);
		 	     \draw[help lines](0,3.75) -- (8,3.75);
		 	    \draw[help lines](0,3.25) -- (8,3.25);
		 	   \draw[help lines](0,4.75) -- (8,4.75);
		 	   \draw[help lines] (0,4.25) -- (8,4.25);
		 	   \draw[help lines] (0,5.75) -- (8,5.75);
		 	    \draw[help lines] (0,5.25) -- (8,5.25);
		 	    \draw[help lines] (0,6.75) -- (8,6.75);
		 	    \draw[help lines] (0,6.25) -- (8,6.25);
		 	    \draw[help lines] (0,7.75) -- (8,7.75);
		 	    \draw[help lines] (0,7.25) -- (8,7.25);
		 	
\draw[->,red,very thick] (0,0)--(.5,0)--(.5,.5)--(1.5,.5)--(1.5,0.75)--(3,0.75)--(3,2)--(3.5,2)--(3.5,4)--(4,4)--(4,6)--(4.5,6)--(4.5,6.5)--(5.0,6.5)--
(5.0,6.75)--(5.25,6.75)--(5.25,7.25)--(5.5,7.25)--(5.5,7.5)--(5.75,7.5)--(5.75,8.1);
		 	     \draw (8,-.5) node{$k_1$};
		 	      \draw (-.5,8) node{$k_2$};
	\end{tikzpicture}
	\caption{An example of an up-right path $\{\vec{k}_n\}_{n=0}^\infty$ for $m=2$. As $n \to \infty$ the path will grow to infinity at an angle determined by $\vec \nu = (\tfrac12,\tfrac{\sqrt3}{2})$.}
\end{center}
\end{figure}
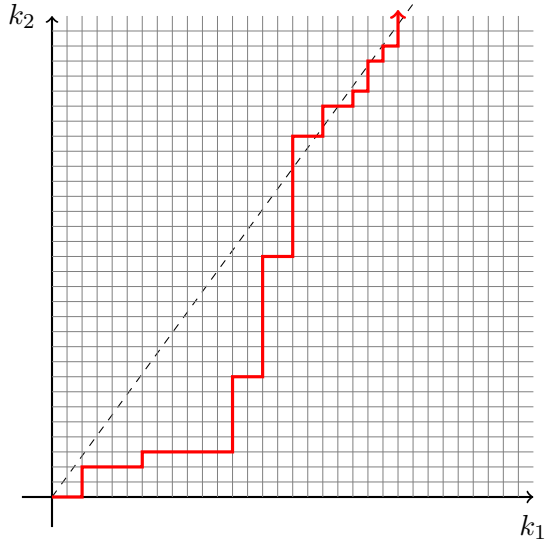

The following two theorems are the main results of the paper. The first one deals with the fixed choice of orthogonality measures and the second allows the measures to vary.

\begin{theorem}[Non-varying systems]\label{MainTheorem}	Let $\mu$ have compact support and assume that the system~\eqref{eq:orthomeasures} belongs to the multiple Nevai class.  Take a path $\{\vec k_n\}_{n=0}^\infty$ satisfying~\eqref{eq:paths}.  Consider $x_1,\ldots,x_n$ taken randomly from \eqref{eq:MOPE} with $m$ weights $w_1,\ldots,w_m$ and multiplicities $\vec k_n$.
	Let $a_j=a_j(\vec \nu)$ and $b_j=b_j(\vec \nu)$ be the limits in \eqref{eq:multipleNevai}. 
	Then, for any polynomial $f $ with real coefficients, we have

	$$\sum_{j=1}^n f(x_j)- \mathbb E\left[ \sum_{j=1}^n f(x_j) \right] \to N\left(0,\sum_{\ell=1}^\infty \ell f_\ell f_{-\ell}\right),$$
	in distribution, where
	$$f_\ell= \frac{1}{2 \pi i} \oint_\gamma f\left(z+ \sum_{j=1}^m\frac{a_j}{z-b_j}  \right) \frac{dz}{z^{\ell +1}}$$
	and $\gamma$ is a contour around the poles $b_j$  with  counter-clockwise orientation.
\end{theorem}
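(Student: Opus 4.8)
The plan is to follow the determinantal-process cumulant method of \cite{BD}, adapted to the (non-symmetric) recurrence matrix of the multiple orthogonal polynomials. The first step is to encode multiplication by $x$ on the polynomials indexed along the path $\{\vec k_n\}$ as a single banded matrix $J$. Taking $\ell=j_n$ in \eqref{eq:nearest_neighbor}, one has $x p_{\vec k_n}=p_{\vec k_{n+1}}+b_{\vec k_n,j_n}p_{\vec k_n}+\sum_j a_{\vec k_n,j}p_{\vec k_n-\vec e_j}$, so the only obstruction to $J$ being genuinely banded is that the lower-order terms $p_{\vec k_n-\vec e_j}$ need not lie on the path. Applying the consistency relations \eqref{eq:consistency} repeatedly, each off-path polynomial of degree $n-1$ can be rewritten as the on-path polynomial of degree $n-1$ plus a bounded combination of lower on-path polynomials, with coefficients built from the $b_{\vec k,\ell}$. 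This produces a banded lower-Hessenberg matrix $J$ whose bandwidth is controlled by $m$, representing multiplication by $x$ in the on-path basis.

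Given $J$, the determinantal structure of the MOPE yields the purely algebraic identity $\mathbb E[e^{tX_n(f)}]=\det\big(1+(e^{tf(J)}-1)P_n\big)$, where $P_n$ is the coordinate projection onto the first $n$ basis vectors and $f(J)$ is banded because $f$ is a polynomial; crucially this requires no self-adjointness of $J$. Taking the trace-logarithm $F_n(t)=\Tr\log\big(1+(e^{tf(J)}-1)P_n\big)$ and expanding in $t$ expresses every cumulant of $X_n(f)$ as a finite sum of traces of words in $f(J)$ and $P_n$. Since $f(J)$ is banded and $P_n$ is a coordinate projection, each commutator $[P_n,f(J)]$ is supported in a fixed-size window around index $n$, so all cumulants localize near the edge of the path.

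The next step is the right-limit. Under the multiple Nevai class hypothesis the coefficients along the path converge to $a_j,b_j$, so translating the window by $n$ and letting $n\to\infty$ replaces $J$ by a bi-infinite, translation-invariant banded operator $J_\infty$. A plane-wave ansatz $p_{\vec k}\sim\prod_j t_j^{k_j}$ in the constant-coefficient recurrence forces the common value $t_\ell=z-b_\ell$ and gives $x=z+\sum_{j=1}^m a_j/(z-b_j)=:\psi(z)$; this is precisely the statement that $J_\infty$ is the matrix, in a non-standard basis, of the Laurent operator with symbol $\psi$. Consequently $f(J_\infty)$ is the Laurent operator with symbol $f\circ\psi$, whose Laurent coefficients are exactly the $f_\ell=\frac{1}{2\pi i}\oint f(\psi(z))\,z^{-\ell-1}\,dz$ of the statement. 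Using $C_2=-\tfrac12\Tr\big([P_n,f(J)]^2\big)$, which follows from cyclicity and $P_n^2=P_n$ without self-adjointness, the limit becomes $-\tfrac12\Tr\big([P,f(J_\infty)]^2\big)$ with $P$ the half-line projection, and the standard Hankel-corner computation for Laurent operators evaluates this to $\sum_{\ell\geq1}\ell f_\ell f_{-\ell}$.

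The heart of the argument, and the main obstacle, is to show that every cumulant of order $\geq 3$ vanishes in the limit. Here I would use the Baker--Campbell--Hausdorff formula to organize the cumulant expansion of $F_n(t)$ into traces of iterated commutators $\mathrm{ad}_{f(J)}^{\,k}(P_n)$: writing $f(J_\infty)=L_{<}+L_0+L_{>}$ as strictly lower, diagonal, and strictly upper triangular parts and factoring $e^{tf(J_\infty)}$ through BCH, the triangular factors interact with $P$ only up to trace-class corrections and therefore contribute solely to the first two cumulants, while the surviving higher-order terms are traces of the form $\Tr\big(L\,\mathrm{ad}_L^{k}(P)\big)$ with $k\geq2$, which vanish by cyclicity of the trace. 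The difficulty is precisely in justifying these cyclicity manipulations: one must prove that the relevant iterated commutators and their products are trace class, with estimates uniform in $n$ before passing to the limit, and verify that neither the non-standard basis nor the lack of self-adjointness of $J$ destroys these bounds. Once the first cumulant is controlled (it provides only the centering), the second cumulant is shown to converge to $\sum_{\ell\geq1}\ell f_\ell f_{-\ell}$, and all higher cumulants are shown to vanish, the Central Limit Theorem follows from the method of cumulants.
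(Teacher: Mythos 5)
Your opening step fails, and it fails at precisely the point that is the central difficulty of this problem. You claim that iterating the consistency relations \eqref{eq:consistency} rewrites each off-path polynomial $p_{\vec k_n-\vec e_\ell}$ as the on-path polynomial of degree $n-1$ plus a \emph{bounded} combination of lower on-path polynomials, so that $J$ is banded with bandwidth controlled by $m$. This is false: one application of \eqref{eq:consistency} gives $p_{\vec k_n-\vec e_\ell}=p_{\vec k_{n-1}}+B_{n,\ell}\,p_{\vec k_{n-1}-\vec e_\ell}$ with $B_{n,\ell}=b_{\vec k_{n-1}-\vec e_\ell,\ell}-b_{\vec k_{n-1}-\vec e_\ell,j_{n-1}}$, and the leftover term $p_{\vec k_{n-1}-\vec e_\ell}$ is again off-path in general, so the iteration runs all the way down, producing the coefficients $\sum_{\ell=1}^m a_{\vec k_n,\ell}B_{n,\ell}B_{n-1,\ell}\cdots B_{n-r+1,\ell}$ on $p_{\vec k_{n-r-1}}$ for \emph{every} $r$ (Proposition \ref{pr:recurrence}). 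In the Nevai limit $B_{n,\ell}\to b_\ell-b_{j_{n-1}}$, which is nonzero whenever $\ell\neq j_{n-1}$ and the $b_j$ are distinct, so these products do not die out: $J$ is only lower Hessenberg, with unboundedly many nonzero subdiagonals for a general path \eqref{eq:paths}. The same error infects your right-limit step: the right limit of $J$ is \emph{not} a translation-invariant (Laurent) matrix; it is the matrix $T_c$ of the Toeplitz \emph{operator} $\tau_c$ in the non-standard basis $\pi_{\vec k_n}(z)=\prod_j(z-b_j)^{k_j}$ (Theorem \ref{thm:rightlimitMOPnevai}), and already for $m=2$ multiple Hermite polynomials on an alternating path it is a genuinely block Toeplitz matrix with $2$-periodic diagonals. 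If your two claims were true, the theorem would follow at once from \cite{BD}; the whole point here is that they are not. In particular your variance computation, which invokes ``the standard Hankel-corner computation for Laurent operators,'' has no Laurent operator to apply to: one must instead conjugate $T_r$ to the genuine Toeplitz matrix $\mathcal T_r$ by the triangular change-of-basis matrix $S$ of Lemma \ref{S-1TS} and prove $C_2^{(n)}(T_r)=C_2^{(n)}(\mathcal T_r)$, which is delicate exactly because $S$ and the projection $P_n$ do not commute (Lemma \ref{lem:variance0}).

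Your treatment of the higher cumulants guesses the right tool (Baker--Campbell--Hausdorff) but leaves the decisive point unresolved: you concede that ``one must prove that the relevant iterated commutators and their products are trace class, with estimates uniform in $n$,'' and offer no such proof. The correct argument avoids this analytic problem entirely by an algebraic mechanism your decomposition misses: splitting $T_r=T_{r_-}+T_{r_+}$ into a strictly lower triangular part and a \emph{banded} upper triangular part, one has the finite-column commutator condition $[T_{r_-},T_{r_+}]Q_s=0$ for some $s$, proved via a contour-integral identity for $\tau_{r_-}\tau_{r_+}-\tau_{r_+}\tau_{r_-}$ (Lemma \ref{lem:decompr}); this comes from the Toeplitz-operator structure, not from triangularity alone. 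Under that condition every Dynkin nested commutator in the BCH expansion has finitely many nonzero columns, so its truncated trace vanishes \emph{exactly} for $n$ large (Corollary \ref{cor:vanishingofnestedcom}, Theorem \ref{thm:decompB}); and since the cumulants are defined through truncated exponentials $\exp_r$ and localize exactly to a finite window around index $n$ by the one-sided banded structure (Theorem \ref{lemmacumu}), no trace-class or operator-norm estimates are ever needed. Without this condition, your cyclicity manipulations and the claim that the triangular factors ``contribute solely to the first two cumulants'' remain assertions, not proofs.
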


Note that the theorem is essentially a result on the universality of the Central Limit Theorem. This result also presents an additional motivation from integrable probability to study which systems of measures are in the multiple Nevai class.  However, to have a wide scope of applications, the restriction on the compactness of the support of $\mu$ is too severe. Fortunately, it is not necessary.

Let us now allow the weights $w^{(n)}_j$ and the measure $\mu^{(n)}$ in~\eqref{eq:orthomeasures} to be $n$-dependent (where $n$ is the number of points in the process). Then the recurrence coefficients $\{a_{\vec k,j}^{(n)}\}$ and $\{b_{\vec k,j}^{(n)}\}$ are also $n$-dependent. Instead of the Nevai condition~\eqref{eq:multipleNevai} we now only require that for a path \eqref{eq:paths} and for any $j\in\{1,\ldots,m\}$,
\begin{equation}\label{eq:multipleNevaivarying}
\begin{cases}
a_{\vec k_{n+s},j}^{(n)} \to a_j(\vec \nu), \quad \mbox{for all } s\in\mathbb Z,
\\
b_{\vec k_{n+s},j}^{(n)} \to b_j(\vec \nu), \quad \mbox{for all } s\in\mathbb Z,
\end{cases}
\end{equation}
as $n \to \infty$. Note that these right-limit-type conditions are slightly  weaker than \eqref{eq:multipleNevaivaryingPre}.
\begin{theorem}[Varying systems] \label{thm:varying}
    Assume that an $n$-dependent family of perfect systems satisfies~\eqref{eq:multipleNevaivarying} along a path with~\eqref{eq:paths}.
	Then, for any polynomial $f$  with real coefficients, we have
	$$\sum_{j=1}^n f(x_j)- \mathbb E\left[ \sum_{j=1}^n f(x_j) \right] \to N\left(0,\sum_{\ell=1}^\infty \ell  f_\ell f_{-\ell}\right),$$
	in distribution, where
	$$f_\ell= \frac{1}{2 \pi i} \oint_\gamma f\left(z+ \sum_{j=1}^m\frac{a_j}{z-b_j}  \right) \frac{dz}{z^{\ell +1}}$$
	and $\gamma$ is a contour around the poles $b_j$  with  counter-clockwise orientation.
\end{theorem}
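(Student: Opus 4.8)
The plan is to adapt the transfer-matrix philosophy of \cite{BD} to the non-self-adjoint recurrence matrix furnished by the nearest neighbor recurrences. First I would linearly order the multiple orthogonal polynomials along the path, writing $q_n:=p_{\vec k_n}$, and encode multiplication by $x$ as a single banded (one-sided) matrix $J$ in this basis: the superdiagonal comes from the term $p_{\vec k_n+\vec e_{j_n}}=q_{n+1}$ in \eqref{eq:nearest_neighbor}, the diagonal from the $b_{\vec k_n,j_n}$, and the lower-triangular part from the $a_{\vec k_n,j}\,p_{\vec k_n-\vec e_j}$ after each off-path polynomial $p_{\vec k_n-\vec e_j}$ is rewritten as a combination of $q_{n-1},q_{n-2},\dots$ through the consistency relations \eqref{eq:consistency}. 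Because the ensemble is determinantal with a rank-$n$ (biorthogonal) projection kernel, the Laplace transform of the linear statistic factorizes as
$$\mathbb E\!\left[e^{tX_n(f)}\right]=\det\nolimits_n\!\left(P_n\,e^{t f(J)}\,P_n\right),$$
where $P_n$ is the coordinate projection onto $\operatorname{span}\{q_0,\dots,q_{n-1}\}$ and $f(J)$ is the image of multiplication by $f$. For polynomial $f$ the matrix $f(J)$ is banded, so $[P_n,f(J)]$ has finite rank and every determinant and trace below is a genuinely finite-dimensional object; no trace-class estimates are needed. The cumulants of $X_n(f)$ are the Taylor coefficients in $t$ of $\log\det_n(P_n e^{t f(J)}P_n)$, so the theorem reduces to showing that, after centering, this converges to $\tfrac12\sigma_f^2 t^2$.

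Second, I would identify the right-limit of $J$. Under hypothesis \eqref{eq:multipleNevaivarying} the recurrence coefficients freeze near the moving boundary index $n$, so $J$ acquires a constant-coefficient right-limit operator $\mathcal J$. To read off its symbol I would use the plane-wave ansatz $p_{\vec k}\sim\prod_{j}t_j^{\,k_j}$: substituting into the frozen recurrence \eqref{eq:nearest_neighbor} forces $t_\ell+b_\ell$ to be independent of $\ell$ (this is exactly what \eqref{eq:consistency} encodes), and setting the common value equal to $z$ gives $t_\ell=z-b_\ell$ together with the dispersion relation
$$x=z+\sum_{j=1}^m\frac{a_j}{z-b_j}=:A(z).$$
Thus $\mathcal J$ is a representation of the Laurent (Toeplitz) operator with symbol $A$ in the non-standard plane-wave basis, $f(\mathcal J)$ has symbol $f(A(z))$, and the numbers $f_\ell=\tfrac{1}{2\pi i}\oint_\gamma f(A(z))\,z^{-\ell-1}\,dz$ are its Fourier coefficients, the contour $\gamma$ encircling the poles $b_j$ that carry all the spectral information.

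Third, for the central limit statement itself I would compute $\log\det_n(P_n e^{t f(J)}P_n)$ through a Gauss (LDU) factorization of the operator exponential,
$$e^{t f(J)}=e^{\mathcal L}\,e^{\mathcal D}\,e^{\mathcal U},$$
with $\mathcal L$ strictly lower, $\mathcal D$ diagonal and $\mathcal U$ strictly upper triangular, whose generators are produced order by order by the Baker--Campbell--Hausdorff recursion. The point of this factorization is that the unitriangular factors $e^{\mathcal U}$ and $e^{\mathcal L}$ are invisible to the compression $P_n$ except through their ``leakage'' across the moving boundary index $n$; the compression identity then expresses the determinant as an extensive bulk term, linear in $t$ and equal to $t\,\Tr(P_n f(J))$, times $\det(1+\mathcal H_n)$ for a Hankel-type boundary operator $\mathcal H_n$ assembled from $(1-P_n)e^{\mathcal U}P_n$ and $P_n e^{\mathcal L}(1-P_n)$. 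After centering, the bulk term is removed, and passing to the right-limit $\mathcal H_n$ converges to a limit whose Fredholm determinant equals $\exp\big(\tfrac12 t^2\sum_{\ell\ge1}\ell f_\ell f_{-\ell}\big)$. This simultaneously produces the variance and forces every cumulant of order at least three to vanish, because the quadratic dependence on $t$ is exact in the limit. Convergence of all cumulants to those of a Gaussian yields convergence in distribution, and the same argument covers Theorem~\ref{MainTheorem} as the special case of an eventually constant $J$.

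I expect the main obstacle to be this third step. In the self-adjoint orthogonal-polynomial setting of \cite{BD} the boundary analysis is tractable because $J$ is a genuine Jacobi matrix; here $J$ is neither symmetric nor tridiagonal, its lower triangle has a range that varies with the local geometry of the path, and for irrational $\vec\nu$ the frozen operator is only almost periodic in the standard basis. The role of the Baker--Campbell--Hausdorff formula is precisely to tame this: one must show that the higher commutators generated in the factorization either do not cross the boundary or vanish in the right-limit, so that the surviving boundary contribution is exactly quadratic in $t$ and the extensive contributions to cumulants of order at least two cancel. Controlling these commutator terms uniformly in $n$, and verifying that the non-standard plane-wave basis really does turn $\mathcal J$ into a clean Toeplitz operator with symbol $A$, is the technical heart of the argument.
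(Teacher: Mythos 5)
Your first two steps coincide with the paper's own: the path-ordered recurrence matrix $J$ (Proposition~\ref{pr:recurrence}), the identity $\mathbb E[\exp_r(\lambda X_n(f))]=\det\left(P_n\exp_r(\lambda f(J))P_n+Q_n\right)$, and the identification of the right limit of $J$ with the matrix $T_c$ of the Toeplitz operator $\tau_c$, $c(z)=z+\sum_j a_j/(z-b_j)$, taken in the basis $\pi_{\vec k_n}(z)=\prod_j(z-b_j)^{k_j}$ --- your ``plane-wave ansatz'' is exactly this basis, and this is Theorem~\ref{thm:rightlimitMOPnevai}. The genuine gap is your third step, which is precisely the route the paper abandons. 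A factorization of $e^{tf(J)}$ followed by a Fredholm-determinant boundary analysis is the Helton--Howe--Pincus/Ehrhardt strategy of \cite{BD}, and it breaks down here: $T_{f\circ c}$ is \emph{not} a Toeplitz matrix and is in general not even a bounded operator, since its entries below the diagonal are built from products of factors $(b_\ell-b_{j_k})$ and can grow exponentially with the distance to the diagonal. Consequently the boundary operators you want to assemble need not be trace class, $\det(1+\mathcal H_n)$ is not a well-defined Fredholm determinant, and ``passing to the right-limit'' inside it has no justification --- this is exactly the convergence issue the paper cites as its reason for replacing the factorization argument. (As written the step is also vacuous: for $\mathcal U$ strictly upper and $\mathcal L$ strictly lower triangular one has $(1-P_n)e^{\mathcal U}P_n=0$ and $P_ne^{\mathcal L}(1-P_n)=0$; and for an exact LDU factorization the principal minor equals $\exp(\Tr P_n\mathcal D)$ with \emph{no} correction factor, so the entire difficulty is hidden in the uncontrolled BCH series defining $\mathcal D$.) Saying that BCH should ``tame'' the boundary terms states the goal, not the argument.

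What is missing are the two mechanisms the paper actually uses, both of which stay at the level of finite algebraic identities and never require operator convergence. First, a localization lemma (Theorem~\ref{lemmacumu}): for one-sided banded matrices the cumulant functional $C_m^{(n)}(B)$ depends only on the entries $B_{ik}$ with $|i-n|,|k-n|<2m^2b$; this is what legitimizes replacing $f(J)$ by its right limit $T_{f\circ c}$ cumulant by cumulant, with no trace-class hypotheses and no factorization of exponentials. Second, the structural fact that kills the higher cumulants: writing $r=f\circ c=r_++r_-$ with $r_+=Pr$ the polynomial part, the commutator $[T_{r_-},T_{r_+}]$ annihilates $Q_s$ for $s\geq \deg r_+-1$, i.e.\ has only finitely many nonzero columns, because $(r_+(w)-r_+(z))/(w-z)$ is a polynomial in $z$ (Lemma~\ref{lem:decompr}); Dynkin's form of BCH then rewrites $C_m^{(n)}(T_r)$ for $m\geq 3$ as traces of nested commutators in $T_{r_\pm}$, each of which inherits the finitely-many-columns property and hence has vanishing trace against $P_n$ for large $n$ (Theorem~\ref{thm:decompB}, Corollary~\ref{cor:vanishingofnestedcom}). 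Your proposal never isolates this commutator identity, and without it the assertion that ``the quadratic dependence on $t$ is exact in the limit'' is circular. Finally, even the variance needs an argument you do not supply: since $T_r$ is not Toeplitz, its entries are not Fourier coefficients, and identifying $C_2^{(n)}(T_r)$ with $\sum_{\ell\geq 1}\ell f_\ell f_{-\ell}$ requires conjugating to the genuine Toeplitz matrix $\mathcal T_r$ by the triangular change-of-basis matrix $S$ of Lemma~\ref{S-1TS}; because $S$ does not commute with $P_n$, this is the delicate finite-$n$ identity of Lemma~\ref{lem:variance0}, after which Lemma~\ref{lem:variance} gives the stated limit.
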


Note that Theorem \ref{MainTheorem} is a special case of Theorem \ref{thm:varying}, so it is sufficient to prove Theorem \ref{thm:varying} which will be the topic of the remainder of this paper.

Note that if all $|b_j|<1$   then $f_\ell$ are the Fourier coefficients of the function $f\left(z+ \sum_{j=1}^m\frac{a_j}{z-b_j}  \right) $. Moreover, $f_{-\ell}=\overline{f_\ell}$ and thus the limiting variance is the $H_{\frac12}$-norm of that function. Such $H_{\frac12}$-noise in the fluctuations is typical in random matrix theory. It should also be noted such $H_{\frac12}$-noise can often be seen as a one-dimensional section of a Gaussian Free Field with Dirichlet boundary conditions. Indeed, for certain Markov chains related to orthogonal polynomials, the Gaussian Free Field fluctuations were established in \cite{Duits2}. We believe that it is possible to use similar techniques to prove Gaussian Free Field fluctuations for similar Markov chains involving multiple orthogonal polynomials (for instance, for Dyson's Brownian motion started from several starting points and the Markov chains discussed in Section \ref{sec:examples}). We intend to come back to this problem in future work.

Our approach for proving Theorem \ref{thm:varying} is inspired by  \cite{BD}. In that paper, the authors considered biorthogonal ensembles where the biorthogonal functions satisfy certain recurrence relations. In case that the recurrence coefficients have limits (more precisely, when the recurrence matrix  has a right limit along the diagonal  that is a  banded Toeplitz matrix) then Corollary 2.2 in \cite{BD} says that  there is a CLT of the type \eqref{eq:CLT}.  The special case $m=1$ in Theorem \ref{thm:varying} is  (the varying version of)  Theorem 2.5  in \cite{BD}. The approach of \cite{BD} can also be performed on the unit circle \cite{DK}.  In further works, it has been extended to prove universality of the fluctuations on  the mesoscopic scale for orthogonal polynomial ensembles on the real line \cite{BDmeso} and, very recently, on the circle \cite{BO}.  In \cite{Duits2} a multi-time extension   was constructed to prove the global fluctuations for a class of non-colliding processes (such as stationary Dyson's Brownian motion) are governed by the Gaussian Free Field.  See also  \cite{L} where the results of \cite{BD} were revisited and related to several combinatorial identities.

Although the results of \cite{BD} also hold for some special MOPE's (in fact, \cite{BD} contains  examples including the 2-matrix model and also \cite{L} discusses the  example of singular values of products of Ginibre matrices), it fails to cover important examples such as the ones mentioned previously and which we discuss later in Section \ref{sec:examples}. The results of \cite{BD} assume that there are finite term recurrence relations for the family of polynomials $\{p_{\vec k_n}\}_{n=0}^\infty$. By using the nearest neighbor recurrences \eqref{eq:nearest_neighbor} together with~\eqref{eq:consistency}, we can find the recurrence relations for the family  $\{p_{\vec k_n}\}_{n=0}^\infty$  (cf. Proposition \ref{pr:recurrence}), but the coefficients in these recurrences depend on the nearest neighbor recurrence coefficients in a complicated way. In general, they will \emph{not} have limits which is required in order for the main results of \cite{BD} to apply. Moreover, the number of terms in this recurrence is not bounded and the recurrence matrix is therefore not banded (nor essentially banded as in \cite{BDmeso}). These are the two reasons that our setting falls outside of the scope of \cite{BD} and we  have to come up with new ideas to deal with them.

\subsection{Overview of the paper}

In Section 2 we construct the recurrence  matrix for the  family $\{p_{\vec k_n}\}_{n=0}^\infty$ out of the nearest neighbor recurrences. Moreover, we show that if \eqref{eq:multipleNevai} or \eqref{eq:multipleNevaivarying}  hold, then one can construct a Toeplitz \emph{operator} with a rational symbol out of the limiting values. The recurrence matrix for the  family $\{p_{\vec k_n}\}_{n=0}^\infty$  and the matrix representation of the Toeplitz operator in a particularly chosen basis share the same asymptotic behavior, as shown in Theorem \ref{thm:rightlimitMOPnevai}. It is important to note that this basis is not the canonical basis and the matrix is therefore not a Toeplitz \emph{matrix}.

In Section 3 we discuss certain general determinants involving exponentials of one-sided banded matrices. We show that  the asymptotic behavior  (except for the leading term) of these determinants  depend only on a small part of the one-sided banded matrix. We then prove that under certain conditions, the determinant converges to a Gaussian based on the Baker--Campbell--Hausdorff formula. This last part is an important novelty and replaces the use of Ehrhardt's version of the Helton--Howe--Pincus formula in \cite{BD,BDmeso,Duits2,DK}. The latter is hard to use in our setting since our matrix representation of the Toeplitz operator is not necessarily bounded, leading to non-trivial questions on convergences. By using the Baker--Campbell--Hausdorff formula we avoid such questions and obtain a more direct proof.

In  Section 4 we start by  recalling that the moment-generating function for the linear statistic is a determinant of the type considered in Section 3. After verifying that the condition of the results in Section 3 are satisfied we then arrive at the proof of Theorem \ref{thm:varying}.

Finally, in Section 5 we present some important applications of Theorem \ref{thm:varying}. We show that Theorem \ref{thm:varying} gives CLT's for the Gaussian Unitary Ensemble with external source, complex Wishart matrices and certain Markov processes on non-intersecting paths related to multiple Charlier, multiple Krawtchouk and multiple Meixner polynomials.
\section{Recurrence matrices}
\subsection{Orthogonal polynomials}	
Let us remind the reader the basics of orthogonal polynomials on the real line and of Orthogonal Polynomial Ensembles.
Let $\mu$ be a compactly supported Borel measure on $\mathbb R$ with infinitely many points in its support. The (monic) orthogonal polynomial $p_n$ is the unique monic polynomial of degree $n$ such that
\begin{equation}
\int_{\mathbb R} p_n(x) x^j \ d \mu(x) = 0, \qquad j=0,1,\ldots, n-1.
\end{equation}
One of the important features of orthogonal polynomials on the real line is that they satisfy a three term recurrence. Indeed, there exist $\{a_k\}_{k=0}^{\infty}  \subset (0,\infty)$ and $\{b_k\}_{k=0}^{ \infty}  \subset \mathbb R$  such that
\begin{equation}
x p_k(x)= p_{k+1}(x)+b_k p_k(x)+ a_{k-1} p_{k-1}(x).
\end{equation}
Here we have set $a_{-1}=0$. We can organize this recurrence into a matrix form
\begin{equation}
\label{eq:usualJacobi}
x
\begin{pmatrix}
p_{0}(x)\\
p_{1}(x)\\
p_{2}(x)\\
\vdots
\end{pmatrix}
=
\begin{pmatrix}
b_0 & 1  & \\
a_0 & b_1 & 1 & \\
& a_1 & b_2 & 1\\
&&\ddots & \ddots& \ddots
\end{pmatrix}
\begin{pmatrix}
p_{0}(x)\\
p_{1}(x)\\
p_{2}(x)\\
\vdots
\end{pmatrix}.
\end{equation}
The tridiagonal infinite matrix on the right-hand side of~\eqref{eq:usualJacobi} is called the Jacobi matrix of $\mu$, which we denote by $\mathcal J$.
A significant part of the literature  on orthogonal polynomials is devoted to studying the relationship between (properties of) $\mu$ and (properties of) $\mathcal{J}$ and the coefficients $a_n$'s and $b_n$'s.

An important special class of measures are those that are in the \emph{Nevai class}. This class is defined as all the measures for which there exists $a$ and $b$ such that
\begin{equation}\label{eq:NevaiClass}
a_n\to a, \qquad b_n \to b, \quad \mbox{as } n\to\infty.
\end{equation}
The Nevai class of measures is rather large, but it is not easy to explicitly characterize in terms of properties of $\mu$. A famous result of Denisov--Rakhmanov says that if a measure $\mu$ has essential support $[b-2\sqrt a,b+2\sqrt a]$ and the absolutely continuous part $\tfrac{d\mu}{dx}>0$ almost everywhere on this interval, then $\mu$ is in the Nevai class~\eqref{eq:NevaiClass}.

 We recall that for  $a(z)=\sum_{j} a_jz^j$ the Toeplitz matrix $\mathcal T_a$ with symbol $a$ is defined as
$$\label{defToep} (\mathcal{T}_a)_{jk}=a_{k-j}, \qquad j,k \in \mathbb Z_{\ge0}.$$
Note that the Toeplitz matrix has two standard definitions in the literature, namely the one we have given or its transpose.  In our definition we are motivated by \eqref{formulaToep} below.

Observe that if the measure $\mu$ is in the Nevai class~\eqref{eq:NevaiClass}, then the matrix $\mathcal{J}$ is a compact perturbation of the Toeplitz matrix $\mathcal{T}_s$ with symbol $s(z)=z+b+a/z$. In particular,
\begin{equation}\label{eq:TeoplitzrightlimitOP}
	\mathcal J_{n+j,n+k} -(\mathcal{T}_s)_{n+j,n+k} \to 0,
\end{equation}
as $n \to \infty$.

It is important to note that the orthogonality measures for various classical orthogonal polynomials, such as Hermite and Laguerre polynomials, are not in the Nevai class. Indeed, the measures have unbounded support and the recurrence coefficients are unbounded. This is hardly surprising, since it is known that the asymptotic behavior of the  Hermite and Laguerre polynomials is best described using a rescaling (so that their zeros accumulate on compact intervals), in other words, varying orthogonality. That is, while taking the limit $n \to \infty$ as in \eqref{eq:TeoplitzrightlimitOP},  we introduce the $n$-dependence in the measure $\mu^{(n)}$, the recurrence coefficients $a_k^{(n)}$, $b_k^{(n)}$, monic orthogonal polynomials $p_n^{(n)}$, and the Jacobi matrix $\mathcal J^{(n)}$. If done appropriately, the   following generalization for \eqref{eq:TeoplitzrightlimitOP} is expected to be universal:
\begin{equation}\label{eq:TeoplitzrightlimitOPvarying}
\mathcal J^{(n)}_{n+j,n+k} -(\mathcal{T}_s)_{n+j,n+k} \to 0,
\end{equation}
as $n \to \infty$, for $s(a)=z+b+a/z$ and some $a>0$.

Very roughly speaking, the limit \eqref{eq:TeoplitzrightlimitOPvarying} can be expected to hold if the zeros of $p_n^{(n)}$ accumulate on a single interval $[b-2\sqrt a,b+2 \sqrt a]$. If they accumulate on several intervals, the limit is not true. This  has been proved for varying measures of the type $e^{-n V(x)} dx$ where $V$ is real analytic function with sufficient growth at infinity \cite{DKMVZ}.

\subsection{Recurrence matrices for multiple orthogonal polynomials}\label{sec:J}

We now return to
multiple orthogonal polynomials: let $\{\mu_j\}_{j=1}^m$ 
be a perfect system of measures.
Then 
the type II multiple orthogonal polynomials satisfy the nearest neighbor recurrence relations~\eqref{eq:nearest_neighbor} and the relations \eqref{eq:consistency}.

Now let us choose any path with~\eqref{eq:paths}.
Note that for each $n \in \mathbb Z_{\ge0}$ we have that $x p_{\vec k_n}(x)$ is a polynomial of degree $n+1$. Therefore there exist coefficients $J_{n,j}$ such that
$$x p_{\vec k_n}(x)= p_{\vec k_{n+1}}(x)+ \sum_{j=0}^n J_{n,j} p_{\vec k_{j}}(x).$$
These coefficients can be organized into a matrix $J= (J_{n,j})_{n,j=0}^\infty$ (we take $J_{n,n+1}=1$ for all $n$ and $J_{n,j}=0$ if $j\ge n+2$), and the above relation can be written as
\begin{equation}
\label{eq:generalJacobi}
x
\begin{pmatrix}
p_{\vec k_0}(x)\\
p_{\vec k_1}(x)\\
p_{\vec k_2}(x)\\
\vdots
\end{pmatrix}
=
J  \begin{pmatrix}
p_{\vec k_0}(x)\\
p_{\vec k_1}(x)\\
p_{\vec k_2}(x)\\
\vdots
\end{pmatrix}.
\end{equation}
The matrix $J$ with $J_{n,j}=0$ for $j\ge n+2$ is said to be in (lower) Hessenberg form. The lower triangular part of $J$ can be computed from the nearest neighbor recurrence \eqref{eq:nearest_neighbor} and \eqref{eq:consistency}, as we show in the next proposition.

\begin{proposition}\label{pr:recurrence}
	For each $n\in\mathbb Z_{\geq 0}$, let $j_n\in\{1,\ldots,m\}$ be as in~\eqref{eq:paths}.
	
	Then
	\begin{multline}\label{eq:coefJ}
	x p_{\vec k_n}(x)
	= p_{\vec k_{n+1}}(x)+ b_{\vec k_n, j_n} p_{\vec k_n}(x)+\left( \sum_{\ell=1}^m a_{\vec  k_n,\ell} \right) p_{\vec k_{n-1}}(x)\\
	+\sum_{r=1}^{n-1}   \left( \sum_{\ell=1}^m a_{\vec  k_n,\ell}  B_{n,\ell}B_{n-1,\ell}  \cdots B_{n-r+1,\ell} \right) p_{\vec k_{n-r-1}}(x),
	\end{multline}
	where
	\begin{equation}\label{eq:coefficientsBm}
	B_{n,\ell}= b_{\vec k_{n-1}-\vec e_\ell,\ell} - b_{\vec k_{n-1}-\vec e_\ell,j_{n-1}}.
	\end{equation}
\end{proposition}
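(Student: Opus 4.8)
The plan is to obtain \eqref{eq:coefJ} from a single application of the nearest neighbor recurrence \eqref{eq:nearest_neighbor} followed by an iterated use of the consistency relation \eqref{eq:consistency}. Throughout I adopt the convention $p_{\vec k}\equiv 0$ whenever $\vec k\notin\mathbb Z_+^m$, which will make the recursions below terminate automatically. First I would apply \eqref{eq:nearest_neighbor} at $\vec k=\vec k_n$ with the distinguished choice $\ell=j_n$. Since $\vec k_n+\vec e_{j_n}=\vec k_{n+1}$ by \eqref{eq:paths}, this immediately produces the leading term $p_{\vec k_{n+1}}$, the diagonal term $b_{\vec k_n,j_n}\,p_{\vec k_n}$, and a sum $\sum_{\ell=1}^m a_{\vec k_n,\ell}\,p_{\vec k_n-\vec e_\ell}$ of ``downward'' polynomials. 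The whole content of the proposition is therefore to re-express each $p_{\vec k_n-\vec e_\ell}$ in the path basis $\{p_{\vec k_j}\}_{j\le n-1}$.

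The key step is a one-level identity: for every $\ell$,
\[
p_{\vec k_n-\vec e_\ell}=p_{\vec k_{n-1}}+B_{n,\ell}\,p_{\vec k_{n-1}-\vec e_\ell}.
\]
I would obtain this by specializing \eqref{eq:consistency} to $\vec k=\vec k_{n-1}-\vec e_\ell$, $r=j_{n-1}$, $s=\ell$. With these choices $\vec k+\vec e_r=\vec k_n-\vec e_\ell$ and $\vec k+\vec e_s=\vec k_{n-1}$, while the prefactor $b_{\vec k,s}-b_{\vec k,r}=b_{\vec k_{n-1}-\vec e_\ell,\ell}-b_{\vec k_{n-1}-\vec e_\ell,j_{n-1}}$ is exactly $B_{n,\ell}$ as defined in \eqref{eq:coefficientsBm}. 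The identity also holds trivially when $\ell=j_{n-1}$ (both sides equal $p_{\vec k_{n-1}}$ and $B_{n,\ell}=0$) and at the lattice boundary where $\vec k_{n-1}-\vec e_\ell\notin\mathbb Z_+^m$ (the last term then vanishes by convention).

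Finally, I would iterate this identity, replacing $n$ by $n-1,n-2,\dots$ and substituting repeatedly for the residual term $p_{\vec k_j-\vec e_\ell}$, which always sits one level lower. A short induction on the number of iterations yields the telescoped expansion
\[
p_{\vec k_n-\vec e_\ell}=p_{\vec k_{n-1}}+\sum_{r=1}^{n-1}B_{n,\ell}B_{n-1,\ell}\cdots B_{n-r+1,\ell}\,p_{\vec k_{n-r-1}},
\]
the recursion terminating once a negative index appears so that $p_{\vec k_0-\vec e_\ell}=0$. Inserting this into $\sum_{\ell} a_{\vec k_n,\ell}\,p_{\vec k_n-\vec e_\ell}$ from the first step and interchanging the order of summation collects the coefficient of $p_{\vec k_{n-1}}$ into $\sum_{\ell} a_{\vec k_n,\ell}$ and the coefficient of each $p_{\vec k_{n-r-1}}$ with $r\ge 1$ into $\sum_{\ell} a_{\vec k_n,\ell}\,B_{n,\ell}\cdots B_{n-r+1,\ell}$, which is precisely \eqref{eq:coefJ}.

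The main obstacle is the bookkeeping at the boundary of the lattice: a product $B_{n,\ell}\cdots B_{n-r+1,\ell}$ can formally involve a coefficient $b_{\vec k,\cdot}$ evaluated at an index $\vec k\notin\mathbb Z_+^m$, since a factor $B_{n-r+1,\ell}$ refers to $\vec k_{n-r}-\vec e_\ell$. I expect the only genuine care to be needed in checking that every such ill-defined product is multiplied by a polynomial $p_{\vec k_j-\vec e_\ell}$ that has already vanished, so that it never actually contributes; equivalently, for fixed $\ell$ the expansion truncates as soon as the $\ell$-th coordinate of the index reaches zero (precisely the step in which $j_{j-1}=\ell$). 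Verifying this truncation, and that it is consistent with writing the sum uniformly up to $r=n-1$ for all $\ell$, is the one point that requires attention; the algebraic core --- one nearest-neighbor step plus a telescoping consistency relation --- is otherwise routine.
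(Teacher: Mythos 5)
Your proposal is correct and follows essentially the same route as the paper's proof: one application of the nearest neighbor recurrence \eqref{eq:nearest_neighbor} at $\vec k_n$ with $\ell=j_n$, followed by iterating the one-level consistency identity $p_{\vec k_n-\vec e_\ell}=p_{\vec k_{n-1}}+B_{n,\ell}\,p_{\vec k_{n-1}-\vec e_\ell}$ obtained from \eqref{eq:consistency}. Your additional bookkeeping at the lattice boundary (the truncation via $B_{j,\ell}=0$ when $j_{j-1}=\ell$, and the vanishing convention for negative indices) is a point the paper's proof passes over silently, but it is a refinement of the same argument rather than a different approach.
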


\begin{proof}
	From \eqref{eq:consistency} we find
	$$
	p_{\vec k_n-\vec e_\ell}=p_{\vec k_{n-1}}+ B_{n,\ell} p_{\vec k_{n-1}-\vec e_\ell},
	$$
	and thus
	\begin{multline}
	x p_{\vec k_n}(x)= p_{\vec k_{n+1}}(x)+ b_{\vec k_n, j_{n}} p_{\vec k_n}(x)
	+ \sum_{\ell=1}^m a_{\vec k_n,\ell} p_{\vec k_n-\vec e_\ell}\\
	= p_{\vec k_{n+1}}(x)+ b_{\vec k_n, j_n} p_{\vec k_n}(x)
	+ \left( \sum_{\ell=1}^m a_{\vec  k_n,\ell} \right) p_{\vec k_{n-1}}(x)	\\
	+ \sum_{\ell=1}^m a_{\vec  k_n,\ell}  B_{n,\ell}  p_{\vec k_{n-1}-\vec e_{\ell} }(x)\\
	= p_{\vec k_{n+1}}(x)+ b_{\vec k_n, j_n} p_{\vec k_n}(x)+\left( \sum_{\ell=1}^m a_{\vec  k_n,\ell} \right) p_{\vec k_{n-1}}(x)\\
	+\sum_{r=1}^{n-1}   \left( \sum_{\ell=1}^m a_{\vec  k_n,\ell}  B_{n,\ell}B_{n-1,\ell}  \cdots B_{n-r+1,\ell} \right) p_{\vec k_{n-r-1}}(x).
	\end{multline}
	This proves the statement.
\end{proof}
We see that $J$ is more complicated than the Jacobi matrix $\mathcal J$ for orthogonal polynomials. It is natural to ask whether  \eqref{eq:multipleNevaivaryingPre} implies that there exists a Laurent polynomial $s(z)=\sum_{j=-q}^p s_jz^j$ such that
	\begin{equation}\label{eq:TeoplitzrightlimitMOPvarying}
J^{(n)}_{n+j,n+k} -(\mathcal{T}_s)_{n+j,n+k} \to 0,
\end{equation}
as $n \to \infty$, similarly to \eqref{eq:TeoplitzrightlimitOPvarying}. If \eqref{eq:TeoplitzrightlimitMOPvarying}  holds, then Theorem \ref{thm:varying} follows from the results in \cite{BD} and we are done.   Although it is true for some special cases (and we refer to  \cite{BD} for  examples), in general it is too much to ask for.

 First of all, note that $J$   depends on the specific path $\{k_{\vec n}\}$ and, in general, $J$ will not be a banded matrix but only one-sided banded (Hessenberg). The matrix $J$ will happen to be banded in very special cases  such as the step-line path $\vec k_n = \vec{k}_{n-1} + \vec e_{n\!\mod m}$ (here $n\!\mod m$ is the remainder of the Euclidean division of $n$ by $m$), which results in the so-called \emph{step-line recurrence relation}.

 Secondly, 	  $J$ depends on the nearest neighbor recurrences in a complicated fashion and there is no reason to expect that the entries of  $J$ behave in a simple way, even if \eqref{eq:multipleNevaivarying} holds.  In the examples in Section \ref{sec:examples} we will see cases where one can obtain perturbations of block Toeplitz matrices, which is likely as good as it gets.

 Our first main result of this paper is that \eqref{eq:TeoplitzrightlimitMOPvarying} does hold if $\mathcal{T}_s$ is replaced by the matrix representation of a Toeplitz operator in a \emph{non-standard basis}, which we  discuss in the next subsection.

		\subsection{Limiting behavior of recurrence matrices}
		
	Denote the space of rational functions on $\mathbb C$ by $\mathcal R$ and   the (sub-)space of all polynomials by $\mathcal P$. We denote the operator that projects any rational function to its polynomial part by $P: \mathcal R \to \mathcal P$  and denote the embedding of $\mathcal P$ into $\mathcal R$ by $P^*: \mathcal P \to \mathcal R$.  Note that
	$$Pg(z)= \frac{1}{2 \pi i } \oint_\gamma \frac{g(w) dw }{w-z}$$
	where $\gamma$ is a simple counter-clockwise oriented contour that goes around all poles of $g$ and around the pole at $z$, and that with this integral representation it is easily verified that $Pp=p$ for any polynomial $p$ and that $P(1/p)=0$ if the degree of $p$ is 1 or greater.   Then for any rational function $r$ we define the multiplication operator $M_r :\mathcal R \to \mathcal R$ by $M_r g=r g$ for all $g \in\mathcal R$.  The Toeplitz operator $\tau_r: \mathcal P \to \mathcal P$, with symbol $r\in \mathcal R$, is now defined as the operator  $$\tau_r= P M_r  P^*.$$
We will be specifically interested in the Toeplitz operator $\tau_c$   with the symbol
\begin{equation}\label{eq:symbol}
c(z)=z+ \sum_{j=1}^m \frac{a_{j} }{z-b_j}.
\end{equation}
with $m\in \mathbb N$, $b_1, \ldots, b_m \in \mathbb R$ and $a_1, \ldots a_m \in \mathbb R$.
		Thus, for any polynomial $p$,
		$$\left(\tau_c p\right)(z)=
		\frac{1}{2 \pi i} \oint_{|w|=\alpha} p(w) \left(w+\sum_{j=1}^m \frac{a_j}{w-b_j}\right) \frac{dw}{w-z},$$
		where 
		$ \alpha> \max \{|b_1|, \ldots, |b_m|,|z|\}.$

	 For each $\vec k = (k_1,\ldots,k_m)\in \mathbb Z_{\geq 0}^m$ we define
 			$$\pi_{\vec k}(z)= \prod_{j=1}^m (z-b_j)^{k_j}.$$
	 Note that $\tau_c$ acts on such functions in a rather simple way. In fact, we have the following relation
\begin{equation}\label{eq:identities1}
\tau_c \pi_{\vec k} = \pi_{\vec k + \vec e_\ell}+ b_\ell \pi_{\vec k} +\sum_{j=1}^m a_j \pi_{\vec k-\vec e_j},
\end{equation}
	for $\ell=1,\ldots, m$ and $\vec k \in \mathbb Z_{\geq 0}^m.$ By subtracting any pair of equations we also find
\begin{equation}\label{eq:identities2}
\pi_{\vec k+ \vec e_j}= \pi_{\vec k + \vec e_\ell}+ (b_\ell-b_j) \pi_{\vec k}
\end{equation}
	for $\ell,j=1,\ldots,m$. It is illuminating to compare these two identities with~\eqref{eq:nearest_neighbor} and~\eqref{eq:consistency}
for multiple orthogonal polynomials.
	
	The collection of all functions $\pi_{\vec k}$ can not be a basis since it  has too many elements. We need to choose one polynomial of degree $n$ for each $n\in\mathbb Z_{\geq 0}$, to obtain a basis. 	We do this using the path $\{\vec k_n\}_{n=0}^\infty$ with~\eqref{eq:paths}.
	
	In analogy with~\eqref{eq:generalJacobi}, we define $T_c=\left(T_c\right)_{j,k=0}^\infty $  as follows
	\begin{equation}\label{eq:matrixTc}
	\begin{pmatrix} (\tau_c \pi_{\vec k_0})(x)\\ (\tau_c \pi_{\vec k_1})(x)\\ (\tau_c \pi_{\vec k_2})(x)\\ \vdots\end{pmatrix}
	=T_c\begin{pmatrix}\pi_{\vec k_0}(x)\\ \pi_{\vec k_1}(x)\\ \pi_{\vec k_2}(x)\\ \vdots\end{pmatrix}.
	\end{equation}
		
	\begin{theorem}\label{thm:rightlimitMOPnevai}
    Assume that an $n$-dependent family of perfect systems  satisfies~\eqref{eq:multipleNevaivarying} along a path with~\eqref{eq:paths}.
		Let $T_c$ be as in~\eqref{eq:matrixTc} and $J^{(n)}$ be the lower Hessenberg matrix corresponding to multiplication by $x$  in the basis $\{p^{(n)}_{\vec k_n}(x)\}_{n=0}^\infty$ (see~\eqref{eq:generalJacobi}). Then, as $n \to \infty$,
		\begin{equation}\label{eqnRightLim}
			(J^{(n)})_{n+s,n+r}-\left(T_c \right)_{n+s,n+r}\to 0, \quad  \forall s,r\in \mathbb Z,
		\end{equation}
		where $c$ is~\eqref{eq:symbol} with $a_\ell= a_{\ell}(\vec \nu)$ and  $b_{\ell}= b_{\ell}(\vec \nu)$.		
	\end{theorem}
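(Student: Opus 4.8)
The plan is to exploit the fact that the functions $\pi_{\vec k}$ obey \eqref{eq:identities1}--\eqref{eq:identities2}, which are term-by-term identical to the nearest neighbor recurrences \eqref{eq:nearest_neighbor}--\eqref{eq:consistency} satisfied by the $p_{\vec k}$, the sole difference being that the constants $a_\ell,b_\ell$ take the place of the coefficients $a^{(n)}_{\vec k,\ell},b^{(n)}_{\vec k,\ell}$. First I would rerun the computation of Proposition~\ref{pr:recurrence} verbatim on the $\pi_{\vec k}$. Since \eqref{eq:matrixTc} defines $T_c$ in exactly the way \eqref{eq:generalJacobi} defines $J^{(n)}$, this produces a closed formula for the entries of $T_c$ of the same shape as \eqref{eq:coefJ}: the diagonal entry at level $n$ is $b_{j_n}$, the first subdiagonal is $\sum_{\ell=1}^m a_\ell$, and the $(n,n-r-1)$ entry equals $\sum_{\ell=1}^m a_\ell\,\tilde B_{n,\ell}\tilde B_{n-1,\ell}\cdots\tilde B_{n-r+1,\ell}$ with $\tilde B_{n,\ell}=b_\ell-b_{j_{n-1}}$. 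In particular $T_c$ is lower Hessenberg with $1$'s on the superdiagonal, so on and above the superdiagonal it agrees identically with $J^{(n)}$ and \eqref{eqnRightLim} is trivial there.

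The core of the proof is then a term-by-term comparison on each fixed lower diagonal. Fix $s\in\mathbb Z$ and $r\ge1$, put $N=n+s$, and subtract the two formulas to get
$$(J^{(n)})_{N,N-r-1}-(T_c)_{N,N-r-1}=\sum_{\ell=1}^m\Big(a^{(n)}_{\vec k_N,\ell}\,B_{N,\ell}\cdots B_{N-r+1,\ell}-a_\ell\,\tilde B_{N,\ell}\cdots\tilde B_{N-r+1,\ell}\Big),$$
the diagonal ($b^{(n)}_{\vec k_N,j_N}-b_{j_N}$) and subdiagonal ($\sum_\ell(a^{(n)}_{\vec k_N,\ell}-a_\ell)$) being the simpler base cases. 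For fixed $s,r$ the number of factors in each product is bounded uniformly in $n$, so it suffices to show that every individual factor converges, namely $a^{(n)}_{\vec k_N,\ell}\to a_\ell$ and $B_{N-i,\ell}\to\tilde B_{N-i,\ell}$; a finite product of convergent sequences converges to the product of the limits, and the difference then telescopes to $0$. The reason this works is precisely that $r$ stays fixed while $n\to\infty$, so no control of growing products is ever required --- this is why the right-limit (rather than a norm) formulation is the correct target.

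What remains is the convergence of the individual coefficients, and here I see two points needing care. The direction index $j_N$ moves along the path and need not stabilize, but it takes values in the finite set $\{1,\ldots,m\}$, so $|b^{(n)}_{\vec k_N,j_N}-b_{j_N}|\le\max_{1\le j\le m}|b^{(n)}_{\vec k_N,j}-b_j|\to0$ by \eqref{eq:multipleNevaivarying}, and likewise for the leading factors $a^{(n)}_{\vec k_N,\ell}$; this settles the diagonal, the subdiagonal, and the $a$-factors. The genuinely delicate step, and the one I expect to be the main obstacle, is that $B_{N,\ell}=b^{(n)}_{\vec k_{N-1}-\vec e_\ell,\ell}-b^{(n)}_{\vec k_{N-1}-\vec e_\ell,j_{N-1}}$ is evaluated at $\vec k_{N-1}-\vec e_\ell$, which in general is not a vertex of the chosen path. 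To handle it I would note that $\{\vec k_{p+1}-\vec e_\ell\}_{p}$ is itself an up-right path in the sense of \eqref{eq:paths} with the same limiting direction $\vec\nu$ --- its increments are $\vec e_{j_{p+1}}$ --- and that $\vec k_{N-1}-\vec e_\ell$ is its level-$(N-2)$ vertex; hence $(\vec k_{N-1}-\vec e_\ell)/|\vec k_{N-1}-\vec e_\ell|\to\vec\nu$ and the multiple Nevai limits apply to give $b^{(n)}_{\vec k_{N-1}-\vec e_\ell,\ell}\to b_\ell$ and $b^{(n)}_{\vec k_{N-1}-\vec e_\ell,j_{N-1}}\to b_{j_{N-1}}$ (again uniformly over the finitely many values of $j_{N-1}$), so that $B_{N,\ell}\to\tilde B_{N,\ell}$. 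In the non-varying setting of Theorem~\ref{MainTheorem} this last step is immediate, since \eqref{eq:multipleNevai} already supplies the limit along every sequence of multi-indices whose direction tends to $\vec\nu$.
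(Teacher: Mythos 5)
Your proposal is correct and takes essentially the same route as the paper: the paper's proof likewise reruns Proposition~\ref{pr:recurrence} with \eqref{eq:identities1}/\eqref{eq:identities2} in place of \eqref{eq:nearest_neighbor}/\eqref{eq:consistency} to get the $\tilde B$-expansion of $T_c$, notes that nothing needs proving on or above the superdiagonal by the Hessenberg structure, subtracts the two entry formulas on each fixed lower diagonal, and concludes from \eqref{eq:multipleNevaivarying}.

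The one point where you go beyond the paper is worth recording. The paper's proof ends by simply asserting that \eqref{eq:multipleNevaivarying} makes the difference of the two sums of products vanish; you instead isolate the genuinely delicate factor, namely that $B_{N,\ell}$ in \eqref{eq:coefficientsBm} is evaluated at the off-path index $\vec k_{N-1}-\vec e_\ell$, and resolve it via the shifted path $\{\vec k_{p+1}-\vec e_\ell\}_p$. This is a real refinement, with one caveat: in the varying setting, \eqref{eq:multipleNevaivarying} as literally stated supplies limits only along the chosen path, so your appeal to ``the multiple Nevai limits'' at $\vec k_{N-1}-\vec e_\ell$ --- exactly like the paper's own tacit use of them --- amounts to reading the hypothesis as holding at the neighboring indices $\vec k_{n+s}-\vec e_\ell$ as well (equivalently, along the shifted paths). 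As you observe, this is automatic in the non-varying Theorem~\ref{MainTheorem}, since \eqref{eq:multipleNevai} gives convergence along any sequence of multi-indices with direction tending to $\vec\nu$, and it is easily checked in all of the paper's applications; so your proof is, if anything, more explicit than the paper's about the only step where the hypothesis is being stretched.
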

\begin{proof}
	Repeating the proof of Proposition~\ref{pr:recurrence} with~\eqref{eq:identities1}/\eqref{eq:identities2} instead of~\eqref{eq:nearest_neighbor}/\eqref{eq:consistency}, we arrive at
\begin{multline}
	\tau_c \pi_{\vec k_n} = \pi_{\vec k_{n+1}}+ b_{j_n} \pi_{\vec k_n}
    +\left( \sum_{\ell=1}^m a_{\ell} \right) \pi_{\vec k_{n-1}}(x)\\
	+\sum_{r=1}^{n-1}   \left( \sum_{\ell=1}^m a_{\ell}  \tilde{B}_{n,\ell}\tilde{B}_{n-1,\ell}  \cdots \tilde{B}_{n-r+1,\ell} \right) \pi_{\vec k_{n-r-1}}(x),
	\end{multline}
	where
	\begin{equation}\label{eq:coefficientstildeBm}
	\tilde{B}_{n,\ell}= b_{\ell} - b_{j_{n-1}}.
	\end{equation}

Using this and~\eqref{eq:coefJ} (with $n$-dependence), for $s \ge r+2$, we get
\begin{multline*}
(J^{(n)})_{n+s,n+r}-\left(T_c \right)_{n+s,n+r}
=
\left( \sum_{\ell=1}^m a^{(n)}_{\vec  k_{n+s},\ell}  B^{(n)}_{n+s,\ell}B^{(n)}_{n+s-1,\ell}  \cdots B^{(n)}_{n+r+2,\ell} \right)
\\
-
\left( \sum_{\ell=1}^m a_{\ell}  \tilde{B}_{n+s,\ell}\tilde{B}_{n+s-1,\ell}  \cdots \tilde{B}_{n+r+2,\ell} \right)
\end{multline*}
Now the condition~\eqref{eq:multipleNevaivarying} implies that the last expression goes to $0$. For $s=r$ and $s=r+1$ the proof is similar, while for $s<r$ there is nothing to prove due to the Hessenberg structure of $J^{(n)}$ and $T_c$.
\end{proof}

Note that the operator $\tau_c$ is a Toeplitz \emph{operator}, but $T_c$ is \emph{not} a Toeplitz \emph{matrix}. To get a Toeplitz matrix,
we change the basis $\{\pi_j(z)\}_{j=0}^\infty$ to  $\{z^j\}_{j=0}^\infty$  in the equality~\eqref{eq:matrixTc}. We obtain
\begin{equation}
\label{formulaToep}
\begin{pmatrix}\tau_c (1)\\ \tau_c (z) \\ \tau_c (z^2) \\ \vdots \end{pmatrix}
=\mathcal T_c \begin{pmatrix}1\\z\\z^2\\ \vdots \end{pmatrix},
\end{equation}
where the matrix $\mathcal T_c$ is the Toeplitz matrix (see~\eqref{defToep}) with the symbol $c(z)$~\eqref{eq:symbol}.
 The matrices $\mathcal{T}_c$ and $T_c$ are related as indicated in the following lemma.

\begin{lemma}\label{S-1TS}
For any $r \in \mathcal R$ we have
$$T_r=S \mathcal{T}_r S^{-1},$$
where
$S$ is the triangular matrix
$$ S_{j,k}= \frac{1}{2 \pi i} \oint_{\gamma} \pi_{\vec k_{j}}(z) \frac{d z}{z^{k+1}}, \quad j,k=0,1,2, \ldots,$$
where $\gamma_0$ is simple counter-clockwise oriented contour that goes around the origin. 
\end{lemma}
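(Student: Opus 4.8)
The plan is to recognize $S$ as the change-of-basis matrix between the monomial basis $\{z^k\}_{k\ge0}$ and the basis $\{\pi_{\vec k_j}\}_{j\ge0}$, and then to observe that $\mathcal T_r$ and $T_r$ are simply the matrices of one and the same linear operator $\tau_r$ written in these two bases. Once this is set up, the identity $T_r = S\mathcal T_r S^{-1}$ is the standard similarity relating the two matrix representations, and the proof reduces to pulling the scalar matrix $S$ through the linear operator $\tau_r$.

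First I would identify the entries of $S$. Writing $\pi_{\vec k_j}(z)=\sum_{k} c^{(j)}_k z^k$ and inserting this into the contour integral, Cauchy's coefficient formula gives $S_{j,k}=c^{(j)}_k$, so $S_{j,k}$ is exactly the coefficient of $z^k$ in $\pi_{\vec k_j}$. Equivalently, in terms of the column vectors of basis functions $\Pi=(\pi_{\vec k_j})_{j\ge0}$ and $E=(z^k)_{k\ge0}$, this reads $\Pi=SE$. Because $\pi_{\vec k_j}$ is monic of degree $|\vec k_j|=j$, the matrix $S$ is lower triangular with $1$'s on the diagonal; in particular it is invertible, and $S^{-1}$ is again lower triangular with unit diagonal, so that $E=S^{-1}\Pi$.

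Next I would record the defining relations of the two matrices, extending \eqref{formulaToep} and \eqref{eq:matrixTc} from the specific symbol $c$ to an arbitrary $r\in\mathcal R$: namely $\tau_r E=\mathcal T_r E$ and $\tau_r\Pi=T_r\Pi$, where $\tau_r$ is applied entrywise to the vectors of functions. The core computation is then a single line: since the entries of $S$ are scalars and $\tau_r$ is linear,
$$T_r\Pi=\tau_r\Pi=\tau_r(SE)=S(\tau_r E)=S\mathcal T_r E=S\mathcal T_r S^{-1}\Pi.$$
Because $\{\pi_{\vec k_j}\}$ is a basis of $\mathcal P$ its elements are linearly independent, so matching coefficients on both sides of $T_r\Pi=(S\mathcal T_r S^{-1})\Pi$ yields $T_r=S\mathcal T_r S^{-1}$ entrywise.

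The only point that requires care—and which I would treat as the main (mild) obstacle—is the well-definedness of the infinite matrix products and the interchange $\tau_r(SE)=S(\tau_r E)$. Both are justified by finiteness. Each row of $S$ has finitely many nonzero entries (triangularity), so rowwise $\tau_r(SE)$ is a \emph{finite} linear combination $\sum_k S_{j,k}\tau_r(z^k)$ to which linearity of $\tau_r$ applies directly. Moreover, for a rational symbol $r$ the operator $\tau_r=PM_rP^*$ raises degrees by a bounded amount (the degree of the polynomial part of $r$), so $\mathcal T_r$ has bounded upper bandwidth; combined with the lower-triangularity of $S$ and $S^{-1}$, every entry of the triple product $S\mathcal T_r S^{-1}$ is a finite sum and is therefore unambiguous. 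With these finiteness observations in place the argument above is complete.
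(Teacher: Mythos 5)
Your proof is correct and takes essentially the same route as the paper: the paper likewise identifies $S$ via the Cauchy residue theorem as the change-of-basis matrix satisfying $S(1,z,z^2,\ldots)^T=(\pi_{\vec k_0},\pi_{\vec k_1},\ldots)^T$ and then invokes linearity of $\tau_r$ on $\mathcal P$ to conclude. Your additional checks (unit lower-triangularity of $S$, invertibility, and the finiteness of all entry sums in $S\mathcal T_r S^{-1}$) are details the paper leaves implicit, not a different argument.
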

\begin{proof}
	By the Cauchy residue theorem,
	\begin{equation}
	S\begin{pmatrix}1\\z\\z^2\\ \vdots \end{pmatrix}= \begin{pmatrix} \pi_0(z)\\ \pi_1(z)\\ \pi_2(z)\\ \vdots \end{pmatrix}.
	\end{equation}
	The lemma follows from the fact that $\tau_r$ is linear on the space of polynomials~$\mathcal P$.
	\end{proof}

	\section{One-sided band matrices}\label{sec:one-sided}
	In this section, we recall and extend parts of \cite{BD} that are necessary for our proofs. The main results are Theorems \ref{lemmacumu} and \ref{thm:decompB}. The most important differences are: (i) the analysis of \cite{BD} is restricted to band matrices, where as here we only require the matrices to be banded from one side; (ii) we will use the Baker--Campbell--Hausdorff formula to establish that the fluctuations are Gaussian in a more direct way than in \cite{BD}.

\subsection{Determinants and traces of one-sided banded matrices}
	Consider the space $\mathcal B$ of one-sided banded matrices $B= (B_{i,j})_{i,j=1}^\infty$. More precisely, $B \in \mathcal B$ if and only if  there exists  $b>0$ (depending on $B$) such that $B_{ij}=0$ for $j > i+b$.  It is not hard to see that the space of such one-sided band matrices is closed  under  addition and matrix multiplication. Therefore, if $B \in \mathcal B$ then  for any polynomial $p$ the matrix $p(B)$  is well-defined and belongs to $\mathcal B$. In particular,
	$$\exp_r( B):= \sum_{j=0}^r \frac{B^j}{j!}$$
	is well defined.
	
	For $t \in \mathbb R$ and $B \in \mathcal B$, we will be interested in
	$$\det\left( P_n \exp_r (t B) P_n +Q_n\right),$$
	where $Q_n$ and $P_n$ are the complementary projection matrices: $P_n$ is the projection $(x_0, x_1,x_2,\ldots)^T\mapsto(x_0,x_1, \ldots,x_{n-1},0,0, \ldots)^T$, and $Q_n=I-P_n$.
\begin{lemma} \label{lem:cumutraces} For $B \in \mathcal B$, define
		\begin{equation}
		C_m^{(n)}( B)=m!\sum_{j=1}^m \frac{(-1)^{j+1}}{j}\sum_{\substack{l_1+\dots+l_j=m\\ l_i\geq 1}}\frac{{\rm Tr} P_n B^{l_1}P_n\dots  B^{l_j} P_n}{l_1!\dots l_j!},
		\end{equation}
		Then
		\begin{equation}
\log	\det\left( P_n \exp_r (t B) P_n +Q_n\right)= \sum_{m=1}^r \frac{t^m}{m!} C_m^{(n)}(B)+\mathcal O(t^{r+1}),
\end{equation}
as $t \to 0$.
	\end{lemma}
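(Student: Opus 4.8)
The plan is to reduce the infinite determinant to a finite one and then expand $\log\det$ as a trace-logarithm series, after which the statement becomes a bookkeeping of powers of $t$. First I would observe that, since $B\in\mathcal B$, every power $B^l$ is again a well-defined one-sided banded matrix (each entry of $B^l$ being a finite sum), so that $\exp_r(tB)=\sum_{l=0}^r \tfrac{t^l}{l!}B^l$ is a genuine infinite matrix. Decomposing the ambient space as $\mathrm{ran}\,P_n\oplus\mathrm{ran}\,Q_n$, the matrix $P_n\exp_r(tB)P_n+Q_n$ is block diagonal of the form $\bigl(\begin{smallmatrix}A&0\\0&I\end{smallmatrix}\bigr)$, where $A$ is the $n\times n$ compression $P_n\exp_r(tB)P_n$ restricted to $\mathrm{ran}\,P_n$. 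Hence the determinant in question equals $\det A$, a finite determinant, and all the quantities below are manifestly well defined.

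Next I would write $A=I_n+\tilde K$ with $\tilde K=\sum_{l=1}^r \tfrac{t^l}{l!}\,P_nB^lP_n$ (the $l=0$ term producing $I_n$). Since $\tilde K=\mathcal O(t)$, for $t$ small enough one has $\|\tilde K\|<1$, so the finite-dimensional identity
$$\log\det A=\Tr\log(I_n+\tilde K)=\sum_{j\ge1}\frac{(-1)^{j+1}}{j}\,\Tr \tilde K^j$$
holds with a convergent series. Expanding each power multinomially gives
$$\Tr \tilde K^j=\sum_{l_1,\dots,l_j\ge1}\frac{t^{\,l_1+\dots+l_j}}{l_1!\cdots l_j!}\,\Tr\bigl(P_nB^{l_1}P_nB^{l_2}\cdots P_nB^{l_j}P_n\bigr),$$
where consecutive projections collapse via $P_n^2=P_n$ and each trace is finite because $P_n$ has finite rank.

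Finally I would collect powers of $t$. The coefficient of $t^m$ receives contributions only from tuples with $l_1+\dots+l_j=m$, and since each $l_i\ge1$ this forces $j\le m$; comparing with the definition of $C_m^{(n)}(B)$ shows that this coefficient is exactly $\tfrac1{m!}C_m^{(n)}(B)$. The one point that requires care, and the only place the truncation level $r$ enters, is the claim that the truncation is harmless for $m\le r$: a tuple summing to $m\le r$ has every $l_i\le r$, so all the matrices $B^{l_i}$ it needs are already present in $\exp_r(tB)$, and replacing $\exp_r$ by the full exponential would alter only the terms with $m>r$. This yields $\log\det\bigl(P_n\exp_r(tB)P_n+Q_n\bigr)=\sum_{m=1}^r \tfrac{t^m}{m!}C_m^{(n)}(B)+\mathcal O(t^{r+1})$, as claimed.

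The main obstacle will be mild and essentially combinatorial rather than analytic: tracking precisely which powers of $t$ survive in the double sum over $j$ and over $(l_1,\dots,l_j)$, and verifying that the first $r$ coefficients are insensitive to replacing $\exp$ by its truncation $\exp_r$. The potentially delicate analytic issues, namely the convergence of the logarithm series and the validity of the $\log\det=\Tr\log$ identity, are automatic once the problem has been reduced to the finite block $A$, which is why I would carry out that reduction at the very start.
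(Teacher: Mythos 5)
Your proof is correct and takes essentially the same route as the paper's: the paper writes $P_n\exp_r(tB)P_n+Q_n = I + P_n(\exp_r(tB)-I)P_n$ (which is exactly your $\tilde K$, viewed as an infinite matrix), applies $\log\det=\Tr\log$, and then expands first the logarithm and then the exponential --- precisely your Mercator-series-plus-multinomial computation, with your block-diagonal reduction playing the same role of making $\log\det=\Tr\log$ legitimate. The paper leaves these expansions as a one-line sketch (citing Lemma 4.1 of an earlier work); you have simply filled in the same details, including the correct observation that the truncation $\exp_r$ versus $\exp$ only affects the $\mathcal{O}(t^{r+1})$ terms.
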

	\begin{proof} This is a standard identity that can be proved by writing
		\begin{multline*}
\log	\det\left( P_n \exp_r (t B) P_n +Q_n\right)= \log \det (I+ P_n( \exp_r (t B) -I)P_n)\\= \Tr \log   (I+ P_n( \exp_r (t B) -I)P_n).
		\end{multline*}
		Expanding the logarithm and then the exponential near $t=0$, we get the statement.  See, for example, \cite[Lemma 4.1]{Duits2} (with $N=1$).
	\end{proof}

 	The following theorem is an extension of the results from \cite{BD} for (two-sided) banded matrices.
	\begin{theorem}\label{lemmacumu}
		Let $B^{(1)}, B^{(2)} \in \mathcal B$ and suppose that
		\begin{equation}\label{Rightlimassum}
		\left|B^{(1)}_{n+i,n+j}-B^{(2)}_{n+i,n+j}\right|\to 0,
		\end{equation}
		as $n\to \infty$, for any fixed $i,j\in \{1,2,\dots\}$. Then, as $n\to \infty$,
		\begin{equation}\label{limCmn}
		\left| C_m^{(n)}\left(B^{(1)}\right)-C_m^{(n)}\left(B^{(2)}\right)\right| \to 0, \end{equation}
		as $n\to \infty$, for any fixed $m=2,3,\dots$.
	\end{theorem}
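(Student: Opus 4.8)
The plan is to exploit two features of the quantities $C_m^{(n)}$: a combinatorial cancellation that removes the only contribution which fails to localize — precisely when $m\ge 2$ — together with a localization forced by the one-sided band structure. First I would reduce to the individual traces $\Tr P_n B^{l_1}P_n B^{l_2}\cdots B^{l_j}P_n$ appearing in the definition of $C_m^{(n)}(B)$ (Lemma~\ref{lem:cumutraces}), and in each of them rewrite every \emph{internal} projection (all $P_n$ except the two outermost, which I keep so that the trace stays finite rank) as $P_n=I-Q_n$. Expanding and regrouping, $C_m^{(n)}(B)$ becomes a finite linear combination, with coefficients independent of $n$, of traces
\[
\Tr P_n B^{m_1}Q_n B^{m_2}Q_n\cdots Q_n B^{m_{p+1}}P_n,\qquad m_1+\cdots+m_{p+1}=m,\ p\ge 0.
\]
The single term with no $Q_n$ is $\Tr P_n B^m P_n$, and its total coefficient is
\[
m!\sum_{j=1}^m\frac{(-1)^{j+1}}{j}\sum_{\substack{l_1+\cdots+l_j=m\\ l_i\ge 1}}\frac{1}{l_1!\cdots l_j!}=m!\,[x^m]\log\!\big(e^x\big)=m!\,[x^m]\,x,
\]
which vanishes for every $m\ge 2$. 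This is exactly where the hypothesis $m\ge 2$ is used: the one contribution that does not feel the cut-off at index $n$ drops out. (For $m=1$ one is left with $\Tr P_nBP_n=\sum_{i<n}B_{ii}$, which need not converge, consistent with the theorem excluding $m=1$.)

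It then remains to show that each surviving trace, having at least one factor $Q_n$, depends only on the entries $B_{uv}$ with both $u$ and $v$ in a window around the moving index $n$ of a fixed width $O(mb)$ independent of $n$, where $b$ is a band width valid for both $B^{(1)}$ and $B^{(2)}$. This is the heart of the argument. I would expand the trace as a sum over index walks $i_0\to i_1\to\cdots\to i_p\to i_0$, where the outer projections pin $i_0<n$ and each $Q_n$ pins a checkpoint $i_k\ge n$. The one-sided band condition $B_{uv}=0$ for $v>u+b$ means that along any nonzero walk the index can increase by at most $b$ per step while it may decrease arbitrarily. Combining the "at most $b$ per upward step" rule with the checkpoints forces all checkpoints into $[n-mb,\,n+mb]$, and a short argument — bounding, within each block $B^{m_k}$, an intermediate index above via its starting checkpoint and below via the need to climb back up to the next checkpoint — confines \emph{every} visited index to $[n-2mb,\,n+2mb]$. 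Hence each surviving trace is a polynomial, with a bounded number of terms, in the entries $B_{n+u',\,n+v'}$ with fixed offsets $u',v'\in[-2mb,2mb]$.

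Finally, writing $B^{(1)}=B^{(2)}+D$ with $D_{n+u',\,n+v'}\to 0$ by \eqref{Rightlimassum}, and using that the windowed entries of $B^{(2)}$ are uniformly bounded in $n$ (automatic in our application, where $B^{(2)}$ is the matrix of a fixed Toeplitz operator), a standard telescoping shows that the difference of each localized trace tends to $0$; summing the finitely many such traces yields $|C_m^{(n)}(B^{(1)})-C_m^{(n)}(B^{(2)})|\to 0$. The main obstacle is the localization claim, and specifically the \emph{lower} bound on the visited indices: the band condition directly controls only upward motion, so one must use the $P_n/Q_n$ checkpoints — most delicately in the block that crosses back from the $Q_n$-side ($\ge n$) to the outer $P_n$-side ($<n$) — to prevent the walk from wandering down toward the origin, where the entries of $B^{(1)}$ and $B^{(2)}$ are uncontrolled.
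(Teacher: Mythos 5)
Your proposal is correct and is essentially the paper's own argument in a slightly reorganized form: the same combinatorial identity (the total coefficient of $\Tr P_nB^mP_n$ vanishes for $m\ge 2$ because $\log e^x=x$, which is the identity the paper quotes from Lemma 4.1 of \cite{BD}) combined with the same localization of the surviving traces via the one-sided band structure. Expanding the internal projections as $P_n=I-Q_n$ and keeping the terms containing at least one $Q_n$ is exactly the paper's step of subtracting $\Tr P_nB^mP_n$ and summing over the complementary index set $I_{j,n}$, and your walk-counting bounds (checkpoints pinned near $n$, all visited indices confined to a window of width $O(mb)$; the paper settles for the cruder width $2m^2b$) follow the same reasoning, including the delicate lower bound coming from the requirement that the walk climb back to its starting index.

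The one place where you go beyond the paper is the final limiting step. You observe that comparing the two localized expressions — each a fixed polynomial in the windowed entries — requires those entries to be uniformly bounded in $n$, whereas the paper concludes directly from \eqref{Rightlimassum} and the localization without comment. Your caution is warranted: as literally stated, the theorem needs such a hypothesis. For instance, with band width $b=1$, take $B^{(1)}_{k,k+1}=k+1$, $B^{(1)}_{k+1,k}=(k+1)^{-1}$ and $B^{(2)}_{k,k+1}=k+1+(k+1)^{-1}$, $B^{(2)}_{k+1,k}=2(k+1)^{-1}$, all other entries zero. Then \eqref{Rightlimassum} holds, but $C_2^{(n)}\bigl(B^{(i)}\bigr)=\Tr P_nB^{(i)}Q_nB^{(i)}P_n=B^{(i)}_{n-1,n}B^{(i)}_{n,n-1}$ equals $1$ for $i=1$ and $2+2n^{-2}$ for $i=2$, so \eqref{limCmn} fails for $m=2$. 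This does not affect the paper's applications: there, one of the two matrices is the right-limit matrix $T_c$ (or $T_{f\circ c}$), whose entries in a moving window are bounded, being built from the finitely many limiting values $a_j(\vec \nu)$, $b_j(\vec \nu)$, and \eqref{Rightlimassum} then transfers this boundedness to the other matrix. So your added hypothesis is a genuine, if minor, repair rather than a redundancy.
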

	\begin{proof}
		We first recall the statement of Lemma 4.1 in \cite{BD}. In equality $t=\log(1+(e^t-1))$ we expand the logarithm first and then the exponential, and then observing that all coefficients of $t^m$ in the expansion vanish, we get
		\begin{equation}
		\label{eq:trivial}
		\sum_{j=1}^m \frac{(-1)^{j+1}}{j} \sum_{\substack{\ell_1+ \ldots+\ell_j=m\\ \ell_i \geq 1}}   \frac{1}{{\ell_{1}! \cdots \ell_{j}!}}=0.
		\end{equation}
		for $m \geq 2$. As a consequence,	it follows that for any infinite matrix $B=(B_{ij})_{i,j=1}^\infty$,  and $m\geq 2$,
		\begin{equation}\label{Cumu1}
		C_m^{(n)}(B)=m!\sum_{j=2}^m \frac{(-1)^{j+1}}{j} \sum_{\substack{l_1+\dots+l_j=m\\ l_i\geq 1}}\frac{{\rm Tr}P_nB^{l_1}P_n\dots B^{l_j}P_n-{\rm Tr} P_nB^m P_n}{l_1!\dots l_j!},
		\end{equation}
		which is Lemma 4.1 in \cite{BD}.
		
        Let us now fix $m\ge 2$. For $B \in\mathcal B$ with $B_{ij}=0$ if $j>i+b$, we define 
\begin{equation}\nonumber
		\widehat B^{(n)}_{ij}=\begin{cases}
		B_{ij}&{\rm for }\,\, |i-n| \textrm{ and } |j-n|<2 m^2 b,\\
		0&{\rm else.}
		\end{cases}\end{equation}
		If we can show that
		\begin{equation}\label{CmnhatB}
		C_m^{(n)}(B)=C_m^{(n)}\left(\widehat B^{(n)}\right),
		\end{equation}
		for $n$ sufficiently large,
		then, by \eqref{Rightlimassum} and \eqref{CmnhatB}, we obtain \eqref{limCmn} and this finishes the proof.
		
		To prove \eqref{CmnhatB}, we use the representation \eqref{Cumu1}, and analyze each summand on the right hand side.
		By definition of the trace, we have
		\begin{multline}\label{eq:bigSum}
		{\rm Tr} P_nB^{l_1}P_n \dots B^{l_j}P_n-{\rm Tr} P_nB^mP_n\\=-\sum_{s=1}^n\sum_{(r_1,\dots,r_{j-1})\in I_{j,n}}\left(B^{l_1}\right)_{sr_1}\left(B^{l_2}\right)_{r_1r_2}\dots \left(B^{l_j}\right)_{r_{j-1}s},
		\end{multline}
		where
		\begin{equation}\nonumber
		I_{j,n}=\left\{(r_1,\dots,r_{j-1}): r_i>n \textrm{ for some }i=1,\dots,j-1\right\}.
		\end{equation}
        Since $B$ is one-sided banded, it follows that
		\begin{equation} \label{Bli}\left(B^{l_i}\right)_{r_{i-1}r_i}=0, \qquad i=1,\dots, j, \end{equation}
		if $r_{i}>r_{i-1}+l_i b$, with $r_0=r_j=s$. Since $l_i\leq m$, it follows that \eqref{Bli} is  $0$ if $r_{i}>r_{i-1}+b m$. Combined with the fact that $s=r_0 \leq n$ and that there is an $i$ such that $r_i>n$, it follows that the only possible contributions in~\eqref{eq:bigSum} can come from the summands where $|r_i-n|<j  m b\leq m^2b$ for all $i$ (including $r_0=r_j=s$). Finally, for a chosen $i=0,\ldots,j$, let us write
$$
(B^{l_i})_{r_{i-1} r_i} = \sum_{(t_1,t_2,\ldots,t_{l_{i}-1})} B_{r_{i-1} t_1} B_{t_1 t_2} \cdots B_{t_{l_i-1}  r_i}.
$$
Note that each of these summands is zero unless $t_k \le t_{k-1} + b$ (we adopt $t_0 = r_{i-1}$ and $t_{l_i} = r_i$). Combining it with $|r_i-n|< m^2b$, this leads to $|t_k-n|\le m^2 b + l_i b < 2m^2 b$.

 Therefore $C_m^{(n)}(B)$ only depends on the entries $B_{ik}$ of $B$ for which $|i-n|,|k-n|<2 m^2b$. This proves~\eqref{CmnhatB}.
	\end{proof}

The following is an important improvement over \cite{BD,DK}.
	
\begin{theorem}\label{thm:decompB}
	Let $B \in \mathcal B$ and decompose $B= B_-+B_+$ such that
	\begin{enumerate}
        \item $B_-$ is strictly lower triangular;
		\item $B_+$ is upper triangular and banded, that is, there exists  $b \in \mathbb N$ such that $(B_+)_{j, j+\ell}=0$ for $\ell<0$ and $\ell >b$;
		\item $[B_-,B_+]Q_s= (B_-B_+-B_+B_-)Q_s= 0$ for some $s\in \mathbb N$.
 		\end{enumerate}
	Then, 	for $m\geq 3$,
	$C_m^{(n)}(B) \to 0 $  as $n \to \infty$.
	\end{theorem}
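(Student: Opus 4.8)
The plan is to pass to the generating function and reduce the vanishing of the cumulants $C_m^{(n)}(B)$ to a single trace computation in which the Baker--Campbell--Hausdorff (BCH) formula does all the work. By Lemma~\ref{lem:cumutraces} it suffices to show that, for $m\ge 3$, the coefficient of $t^m$ in $\log\det(P_n\exp_r(tB)P_n+Q_n)$ tends to $0$ as $n\to\infty$ (taking $r\ge m$, so that $\exp_r$ may be treated as the formal exponential truncated at order $t^m$). First I would disentangle the exponential using $B=B_-+B_+$: by the BCH (Zassenhaus) formula there is a factorization, valid as a formal power series in $t$,
\begin{equation*}
e^{tB}=e^{tB_-}\,V(t)\,e^{tB_+},\qquad V(t)=\exp\Big(-\tfrac{t^2}{2}[B_-,B_+]+\sum_{k\ge 3}t^k Y_k\Big),
\end{equation*}
where each $Y_k$ is a Lie element of degree $k$ in $B_-,B_+$.

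The crucial structural input is the hypothesis $[B_-,B_+]Q_s=0$, i.e.\ $[B_-,B_+]$ has only finitely many nonzero columns. I would first prove a propagation lemma: if $C$ has finite column support and $X\in\mathcal B$, then $[X,C]=XC-CX$ again has finite column support (the one-sided bandedness of $X$ enlarges the support by at most $b$). Then, by induction on $k$, using that each degree-$k$ Lie element is a combination of commutators $[B_\pm,W]$ with $W$ of degree $k-1$, it follows that each $Y_k$ ($k\ge 2$) has finite column support, and hence so does $V(t)-I$ at every fixed order in $t$. Next I would strip off the two triangular factors by elementary projection identities: since $e^{tB_-}$ is lower triangular with unit diagonal one has $P_ne^{tB_-}Q_n=0$, giving $\det(P_ne^{tB_-}MP_n+Q_n)=\det(P_nMP_n+Q_n)$; and since $e^{tB_+}$ is upper triangular one has $Q_ne^{tB_+}P_n=0$, giving $\det(P_nMe^{tB_+}P_n+Q_n)=\det(P_nMP_n+Q_n)\prod_{i<n}e^{t(B_+)_{ii}}$. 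The product factor is purely linear in $t$ in the logarithm, so it affects only $C_1^{(n)}$; for every $m\ge 2$ we are left with $C_m^{(n)}(B)=m!\,[t^m]\log\det(P_nV(t)P_n+Q_n)$.

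Now I would exploit that $D(t):=V(t)-I$ has finite column support, say in columns $<\Sigma$ up to order $t^m$. Writing $\log\det(P_nVP_n+Q_n)=\Tr\log(I+P_nDP_n)=\sum_{j\ge1}\frac{(-1)^{j+1}}{j}\Tr\big((P_nDP_n)^j\big)$, I observe that in each cyclic trace every summation index appears as a column index of some factor of $D$ and is therefore forced to be $<\Sigma$; hence for $n>\Sigma$ the projections $P_n$ are inert and $\Tr((P_nDP_n)^j)=\Tr(D^j)$, independent of $n$. Consequently $\log\det(P_nVP_n+Q_n)$ stabilizes order by order to $\Tr\log(I+D)=\Tr(\log V)=\sum_{k\ge2}t^k\,\Tr(Y_k)$, so that $C_m^{(n)}(B)=m!\,\Tr(Y_m)$ for all large $n$. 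Finally, for $m\ge 3$ the Lie element $Y_m$ is a sum of commutators $[B_\pm,W]$ with $W$ of degree $m-1\ge 2$ and hence of finite column support; and $\Tr[X,C]=0$ whenever $C$ has finite column support and $X\in\mathcal B$, since both $\Tr(XC)$ and $\Tr(CX)$ reduce to the same absolutely convergent finite double sum. Therefore $\Tr(Y_m)=0$ and $C_m^{(n)}(B)$ is eventually exactly $0$, which is stronger than the asserted convergence.

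The main obstacle, and the reason the argument is delicate, is precisely the borderline case $m=2$, which explains the restriction to $m\ge 3$: there $Y_2=-\tfrac12[B_-,B_+]$ is also a commutator of finite column support, but its two factors $B_-$ and $B_+$ are \emph{not} of finite column support, so the cancellation $\Tr[X,C]=0$ fails and $\Tr(Y_2)$ (the variance) generically survives. Thus the whole scheme hinges on (i) the propagation lemma, which shows that the localization of $[B_-,B_+]$ forced by hypothesis~(3) is inherited by every higher BCH commutator, and (ii) the vanishing of the trace of a commutator one of whose factors is genuinely finite-column-support. These are where I would spend the most care; the triangular stripping and the stabilization of the truncated traces are then routine.
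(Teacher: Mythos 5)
Your proposal is correct, and it reaches the same destination as the paper by a genuinely different reduction. The shared core is identical to the paper's key mechanism: hypothesis (3) plus a propagation argument shows that every Lie element of degree $\geq 2$ in $B_-,B_+$ has finitely many nonzero columns (this is exactly what the paper proves inside Corollary \ref{cor:vanishingofnestedcom}), and the trace of a commutator $[B_\pm,C]$ with $C$ of finite column support vanishes (the paper's Lemma \ref{lem:TrPnB+A}). Where you differ is in how the cumulants get reduced to traces of Lie elements. The paper works additively at the level of the trace formula from Lemma \ref{lem:cumutraces}: it localizes the sums near index $n$ using one-sided bandedness, commutes all $B_-$'s past the $B_+$'s modulo finite-column-support errors (Lemma \ref{lem:B+B-A}), invokes the combinatorial identity of Corollary \ref{cor:trexpa} to discard the ``projected'' terms, and then needs Dynkin's explicit form of the Baker--Campbell--Hausdorff formula to recognize what remains as nested commutators. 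You instead work multiplicatively with the generating function: the Zassenhaus-type factorization $e^{tB}=e^{tB_-}V(t)e^{tB_+}$, the triangular stripping of the two outer factors from $\det(P_n\cdot P_n+Q_n)$, and the stabilization argument give the exact identity $C_m^{(n)}(B)=m!\,\Tr(Y_m)$ for all large $n$ and all $m\ge 2$, after which only the abstract fact that $\log V(t)$ is a Lie series (not Dynkin's explicit coefficients) is needed. Your route buys a cleaner bookkeeping — no localization near $n$, no reordering lemma, no combinatorial identity, and the variance $C_2^{(n)}(B)=-\Tr[B_-,B_+]$ (for large $n$) drops out of the same formula, consistent with the paper's Lemma \ref{lem:variance0}; the price is that you must justify the determinant factorization identities and the formal-series manipulations of infinite matrices, which you do correctly (all steps are coefficient-wise identities in $\mathcal B$, and the determinants are genuinely $n\times n$). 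Both proofs in fact yield the stronger conclusion that $C_m^{(n)}(B)$ is exactly $0$ for $n$ large, not merely convergent to $0$.
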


The proof will make use of the Baker--Campbell--Hausdorff formula. We postpone the proof for a moment and proceed with an intermezzo on several well-known identities around the Baker--Campbell--Hausdorff formula and some of their consequences.

\subsection{Baker--Campbell--Hausdorff formula}
	The Baker--Campbell--Hausdorff formula gives an expression of $Z$ in $\exp Z=\exp X \exp Y$ in terms of nested commutators of $X$ and $Y$.

Before we state the form that we will need, we start with a more straightforward expansion stated in the next lemma.
 \begin{lemma}
 Let $A_1,A_2 \in \mathcal B$, then

	\begin{equation}
		\label{eq:preBCH}
				\log e^{t A_1} e^{tA_2}= \sum_{m=1}^\infty t^m  \sum_{j=1}^m \frac{(-1)^{j+1}}{j} \sum_{\substack{u_1+v_1+\dots+u_j+v_j=m\\ u_i+v_i \geq 1}}  \frac{A_1^{u_1} A_2^{v_1}\cdots A_1^{u_j} A_2^{v_j}}{u_1!v_1 ! \cdots u_j! v_j!}.
	\end{equation} \end{lemma}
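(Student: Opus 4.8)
The plan is to treat \eqref{eq:preBCH} as an identity of formal power series in $t$ whose coefficients lie in $\mathcal B$, and to derive it directly from the Mercator series for the logarithm. First I would expand each exponential factor as its defining series,
$$e^{tA_1} = \sum_{u\ge 0}\frac{t^u}{u!}A_1^u, \qquad e^{tA_2} = \sum_{v\ge 0}\frac{t^v}{v!}A_2^v,$$
and multiply them to obtain
$$e^{tA_1}e^{tA_2} = I + X, \qquad X := \sum_{\substack{u,v\ge 0\\ u+v\ge 1}}\frac{t^{u+v}}{u!\,v!}A_1^u A_2^v.$$
Here $X$ has no constant ($t^0$) term, so applying $\log(I+X)=\sum_{j\ge 1}\frac{(-1)^{j+1}}{j}X^j$ is legitimate at the level of formal power series: every monomial appearing in $X^j$ carries a power of $t$ that is at least $j$, so only finitely many values of $j$ contribute to any fixed power $t^m$.

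Next I would expand the $j$-th power,
$$X^j = \sum_{\substack{u_1,v_1,\ldots,u_j,v_j\ge 0\\ u_i+v_i\ge 1}}\frac{t^{(u_1+v_1)+\cdots+(u_j+v_j)}}{u_1!v_1!\cdots u_j!v_j!}A_1^{u_1}A_2^{v_1}\cdots A_1^{u_j}A_2^{v_j},$$
insert this into the logarithm series, and collect terms according to the total degree $m=\sum_{i=1}^j (u_i+v_i)$. Since each block obeys $u_i+v_i\ge 1$, one has $m\ge j$, so for fixed $m$ the summation index $j$ runs only over $1\le j\le m$; this is precisely the range in \eqref{eq:preBCH} and reproduces the asserted coefficient of $t^m$.

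The only point requiring care—and the one I would emphasize—is that the $A_i$ need not be bounded operators, so the rearrangements must be justified entrywise rather than in an operator norm. I would observe that, because $A_1,A_2\in\mathcal B$ and $\mathcal B$ is closed under addition and multiplication, each product $A_1^{u_1}A_2^{v_1}\cdots A_1^{u_j}A_2^{v_j}$ again lies in $\mathcal B$; moreover, for each fixed $m$ there are only finitely many admissible tuples $(u_1,v_1,\ldots,u_j,v_j)$ and only finitely many $j\le m$, so the coefficient of $t^m$ is a finite $\mathcal B$-linear combination and hence a well-defined one-sided banded matrix. Consequently, in any prescribed matrix entry only finitely many terms of the double expansion contribute to a given power of $t$, which legitimises the interchange of summations and the regrouping by degree. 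This establishes \eqref{eq:preBCH} as an identity in $\mathcal B[[t]]$.
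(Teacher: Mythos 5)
Your proof is correct and follows essentially the same route as the paper: both expand $\log\bigl(I+(e^{tA_1}e^{tA_2}-I)\bigr)$ via the Mercator series, expand the exponentials inside each $j$-th power, and regroup by the total power of $t$, using $u_i+v_i\geq 1$ to see that only $j\leq m$ contributes. Your additional care about well-definedness (coefficients being finite sums in $\mathcal B$) corresponds to the paper's remark on interpreting the identity as a formal series via truncated exponentials.
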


	\begin{remark}	The series on the right-hand side \eqref{eq:preBCH} should be considered as formal series. As noted in the previous subsection, the space $\mathcal B$ forms an algebra, so each of the coefficients on the right-hand side of \eqref{eq:preBCH} is well defined. Therefore the statement is to be interpreted in the following standard way: if, on the left-hand side, the exponential is replaced by its truncated series $\exp_r$ and similarly for the logarithm, then each coefficient on the left-hand side matches the corresponding coefficient on the right-hand side for sufficiently large $r$.
		\end{remark}
			\begin{proof}
		This follows by expanding first the logarithm and then the exponentials in the following sequence of equalities:
		\begin{multline}
		\log e^{t   A_1 }e^{t   A_2 }
		= \sum_{j=1}^\infty \frac{(-1)^{j+1}}{j}  (e^{t  A_1}e^{t  A_2}-I)^j\\
		=   \sum_{j=1}^\infty \frac{(-1)^{j+1}}{j}  	\sum_{\substack{u_1,v_1, \ldots, u_j,v_j=0 \\ u_i+v_i\geq 1}}^{\infty} t^{u_1+v_1+ \ldots u_j+v_j}  \frac{A_1^{u_1} A_2^{v_1}\cdots A_1^{u_j} A_2^{v_j}}{u_1!v_1!\cdots u_j!v_j!}
		\end{multline}
		
		The sum can be further reorganized to
		\begin{multline}
\log e^{t   A_1 }e^{t   A_2 }  =   \sum_{j=1}^\infty \frac{(-1)^{j+1}}{j}  \sum_{m=j}^\infty t^m  	 \sum_{\substack{u_1+v_1+ \ldots +u_j+v_j=m \\ u_i+v_i\geq 1}}  \frac{A_1^{u_1} A_2^{v_1}\cdots A_1^{u_j} A_2^{v_j}}{u_1!v_1!\cdots u_j!v_j!}\\
=  \sum_{m=1}^\infty  t^m  \sum_{j=1}^m\frac{(-1)^{j+1}}{j}  \sum_{\substack{u_1+v_1+ \ldots +u_j+v_j=m \\ u_i+v_i\geq 1}}  \frac{A_1^{u_1} A_2^{v_1}\cdots A_1^{u_j} A_2^{v_j}}{u_1!v_1!\cdots u_j!v_j!},
\end{multline}
		which gives the statement.
		\end{proof}
	
This immediately implies the following.

\begin{corollary} \label{cor:trexpa}
Let  $A_1$ and $A_2$ be infinite matrices with only finitely many non-zero entries. Then
	$$ \sum_{j=1}^m \frac{(-1)^{j+1}}{j} \sum_{\substack{u_1+v_1+ \ldots +u_j+v_j=m \\ u_i+v_i\geq 1}}  {\rm Tr}\frac{A_1^{u_1} A_2^{v_1}\cdots A_1^{u_j} A_2^{v_j}}{u_1!v_1!\cdots u_j!v_j!}=0,$$
	for $m\geq 2$.
\end{corollary}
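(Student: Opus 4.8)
The plan is to take the trace on both sides of the expansion~\eqref{eq:preBCH} and to compare coefficients of $t^m$. Since $A_1$ and $A_2$ have only finitely many non-zero entries, there is an $N$ such that all non-zero entries of both matrices lie in the top-left $N\times N$ block; every product $A_1^{u_1}A_2^{v_1}\cdots A_1^{u_j}A_2^{v_j}$ appearing in~\eqref{eq:preBCH} is then again supported in this block, so all the traces are finite sums and the manipulations below reduce to genuine finite-dimensional identities (outside the block the exponentials are the identity, contributing nothing to the trace or determinant).

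Reading~\eqref{eq:preBCH} as a formal power series in $t$, exactly as in the preceding remark, the coefficient of $t^m$ on the right-hand side is, after applying $\Tr$,
\[
\sum_{j=1}^m \frac{(-1)^{j+1}}{j}\sum_{\substack{u_1+v_1+\dots+u_j+v_j=m\\ u_i+v_i\geq 1}} \Tr\frac{A_1^{u_1}A_2^{v_1}\cdots A_1^{u_j}A_2^{v_j}}{u_1!v_1!\cdots u_j!v_j!},
\]
which is precisely the quantity we must show vanishes for $m\geq 2$. To evaluate the trace of the left-hand side I would write $Z=\log\bigl(e^{tA_1}e^{tA_2}\bigr)$ and use the finite-block identity $\det e^{Z}=e^{\Tr Z}$. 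By multiplicativity of the determinant and Jacobi's formula $\det e^{tA_i}=e^{t\,\Tr A_i}$, this gives $e^{\Tr Z}=e^{t(\Tr A_1+\Tr A_2)}$, and since $Z$ is a formal power series in $t$ with no constant term, matching constant terms forces $\Tr Z = t\,(\Tr A_1+\Tr A_2)$, which is linear in $t$.

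Comparing coefficients of $t^m$ then finishes the argument: the left-hand side contributes only at $m=1$ (with coefficient $\Tr A_1+\Tr A_2$, consistent with the $m=1$ term on the right), so for every $m\geq 2$ the displayed sum is forced to be $0$. I do not expect a genuine obstacle here. The only point deserving a word is the interchange of $\Tr$ with the formal series in~\eqref{eq:preBCH}, but because each coefficient of $t^m$ is already a \emph{finite} linear combination of traces of finitely supported matrices, this is automatic and involves no analytic subtlety; equivalently, for $t$ in a neighbourhood of $0$ all series converge within the $N\times N$ block and the identity $\Tr\log = \log\det$ holds there verbatim.
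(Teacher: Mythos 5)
Your proposal is correct and follows essentially the same route as the paper: the paper's proof also observes that the trace of the left-hand side of \eqref{eq:preBCH} is linear in $t$ for finite matrices, forcing the traces of the $t^m$-coefficients to vanish for $m\geq 2$. You merely spell out the justification for that linearity (via $\Tr\log = \log\det$ and multiplicativity of the determinant, with the constant-term check ruling out a $2\pi i$ ambiguity), which the paper leaves implicit.
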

\begin{proof} 
	Since $A_1$ and $A_2$ have only finitely many entries, we can think of them as finite matrix. This means that we can write  
	\begin{multline*}
	  \Tr \log e^{tA_1} e^{tA_2}=\log \det \left( e^{t A_1} e^{t A_2} \right)=\log \det \left( e^{t A_1}\right) \det \left( e^{t A_2} \right)\\=
	  \log \left( e^{t \Tr A_1} e^{t \Tr A_2}\right)= t \Tr A_1 +t \Tr A_2.
	\end{multline*}
	Thus, the trace of the left-hand side of \eqref{eq:preBCH} is linear in $t$ for finite matrices. Therefore the trace of the  coefficients of $t^m$ for $m \geq 2$ on the right-hand side must vanish and  this proves the statement.
\end{proof}

\begin{remark}
	By taking $A_2=0$, the equality reduces to \eqref{eq:trivial}.
\end{remark}
\begin{remark}
	It is important to note that Corollary  \ref{cor:trexpa} does not hold for general infinite matrices $A_1$ and $A_2$. Indeed, if $A_1$ or $A_2$ is not of trace class the traces in the summand are not necessarily well-defined and the statement does not make sense.
	\end{remark}

The Baker--Campbell--Hausdorff formula now states the far from obvious results that each coefficient of $t^m$ on the right-hand side of \eqref{eq:preBCH} can be written as a nested commutator as stated in the next proposition. The form that we have chosen here is due to Dynkin~\cite{Dynkin}.

Let $X_1,X_2,\dots,X_j$ be in $\mathcal{B}$. 
We define the nested commutator by

\begin{equation}
[X_1,X_2,\dots,X_j]=[X_1,[X_2,[\dots,[X_{j-2},[X_{j-1},X_j]]\dots]]. \end{equation}
For example,
\begin{equation}
[X_1,X_2,X_3]=[X_1,[X_2,X_3]]. \end{equation}
We can also take ``powers'' within nested commutator:
\begin{equation}
[X_1^{(\ell_1)},X_2^{(\ell_2)},\dots, X_j^{(\ell_j)}]=[\underbrace{X_1,X_1,\dots,X_1}_{\ell_1},\underbrace{X_2,X_2,\dots,X_2}_{\ell_2},\dots,\underbrace{X_j,X_j,\dots,X_j}_{\ell_j}].
\end{equation}
For example,
\begin{equation}
[X_1^{(2)},X_2^{(1)}]=[X_1,[X_1,X_2]].
\end{equation}

	 \begin{proposition}[Baker--Campbell--Hausdorff formula, Dynkin's version]

	\begin{equation} \label{eq:BCDH} \log e^{t A_1} e^{tA_2}= \sum_{m=1}^\infty \frac{t^m}{m}  \sum_{j=1}^m \frac{(-1)^{j+1}}{j} \sum_{\substack{u_1+v_1+\dots+u_j+v_j=m\\ u_i+v_i \geq 1}}  \frac{[A_1^{(u_1)}, A_2^{(v_1)},\cdots , A_1^{(u_j)}, A_2^{(v_j)}]}{u_1!v_1 ! \cdots u_j!v_j!}.
	\end{equation}
\end{proposition}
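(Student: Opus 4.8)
The plan is to obtain Dynkin's formula \eqref{eq:BCDH} from the monomial expansion \eqref{eq:preBCH}, which is already in hand, by applying a single linear operator — the right-nested bracketing — to each homogeneous component. The asserted identity is one of formal power series in $t$ with coefficients in the algebra $\mathcal B$, so it suffices to match the coefficient of each power of $t$; equivalently, I treat $A_1,A_2$ as free generators and compare homogeneous parts of fixed degree $m$, specializing to matrices in $\mathcal B$ only at the end.

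First I would set up the bracketing map. Let $\theta$ be the linear operator sending a word $z_1z_2\cdots z_m$ in the letters $A_1,A_2$ to the right-nested commutator $[z_1,z_2,\dots,z_m]$ of the paper, extended linearly to all homogeneous elements of degree $m$. Directly from the definition of the nested commutator, $\theta$ maps the monomial $A_1^{u_1}A_2^{v_1}\cdots A_1^{u_j}A_2^{v_j}$ to $[A_1^{(u_1)},A_2^{(v_1)},\dots,A_1^{(u_j)},A_2^{(v_j)}]$. Hence, writing $H_m$ for the coefficient of $t^m$ on the right-hand side of \eqref{eq:preBCH}, applying $\theta$ term by term turns $H_m$ into exactly $m$ times the coefficient of $t^m$ on the right-hand side of \eqref{eq:BCDH}. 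The whole Proposition therefore collapses to the single identity $\theta(H_m)=m\,H_m$.

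The second ingredient is the classical Dynkin--Specht--Wever lemma: for any homogeneous Lie element $L$ of degree $m$ — that is, any linear combination of iterated commutators of $A_1,A_2$ of total length $m$ — one has $\theta(L)=m\,L$. I would supply a short proof by induction on $m$, using the recursion $\theta(A_\ell w)=[A_\ell,\theta(w)]$ (peeling the first letter) together with the fact that the degree-$m$ part of the free Lie algebra is spanned by brackets $[A_\ell,L']$ with $L'$ a Lie element of degree $m-1$; alternatively this is simply quoted from \cite{Dynkin}. Granting the lemma, the Proposition follows the instant one knows that $H_m$ is itself a Lie element, since then $\theta(H_m)=m\,H_m$ and dividing by $m$ is precisely \eqref{eq:BCDH}.

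The main obstacle is exactly this last point: showing that the degree-$m$ part $H_m$ of $\log e^{tA_1}e^{tA_2}$ lies in the free Lie algebra. I would establish this by a primitivity argument (Friedrichs' criterion). Endow the free associative algebra with the coproduct $\Delta$ for which $A_1$ and $A_2$ are primitive; then $e^{tA_1}$ and $e^{tA_2}$ are group-like, hence so is their product, and the logarithm of a group-like element is primitive, i.e.\ $\Delta H=H\otimes 1+1\otimes H$ with $H=\log e^{tA_1}e^{tA_2}$. Friedrichs' theorem identifies the primitive elements of the free associative algebra with the free Lie algebra, so each $H_m$ is a Lie element, which is what remained to be shown. (If a self-contained treatment is preferred to the Hopf-algebraic one, the primitivity of $H_m$ can instead be read off directly from the explicit expansion \eqref{eq:preBCH}, at the cost of a more computational argument.) Combining the primitivity of $H_m$ with the Dynkin--Specht--Wever lemma gives $H_m=\tfrac1m\,\theta(H_m)$, which is the right-hand side of \eqref{eq:BCDH}, completing the proof after reinserting the powers of $t$.
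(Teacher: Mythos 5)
You should first note what the paper actually does here: it gives \emph{no} proof of this proposition. It states that the identity is ``purely algebraic,'' that its content is exactly that the coefficients of \eqref{eq:BCDH} match those of \eqref{eq:preBCH}, and it refers the reader to \cite{BF2,Dynkin}. Your proposal supplies the standard proof found in those references, and its architecture is sound: the reduction to the free associative algebra on two generators is legitimate (the claim is an identity between the coefficients of $t^m$, which are noncommutative polynomials in $A_1,A_2$, so a free-algebra identity specializes to $\mathcal B$); the right-nested bracketing map $\theta$ does carry the monomial $A_1^{u_1}A_2^{v_1}\cdots A_1^{u_j}A_2^{v_j}$ to the paper's $[A_1^{(u_1)},A_2^{(v_1)},\dots,A_1^{(u_j)},A_2^{(v_j)}]$, so the proposition is equivalent to $\theta(H_m)=m\,H_m$ for the degree-$m$ component $H_m$ of \eqref{eq:preBCH}; the Hopf-algebraic step ($A_1,A_2$ primitive, hence $e^{tA_1}e^{tA_2}$ group-like, hence its logarithm primitive, hence Lie by Friedrichs) is correct; and the Dynkin--Specht--Wever eigenvalue identity does hold for the \emph{right}-nested convention used in the paper (check at $m=2$: $\theta([A_1,A_2])=2[A_1,A_2]$). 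So your route is correct and is essentially the one in the paper's cited sources rather than anything argued in the paper itself; what it buys is a self-contained derivation of \eqref{eq:BCDH} from \eqref{eq:preBCH}, which the authors deliberately outsourced.

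One step in your sketch of Dynkin--Specht--Wever would not go through as literally stated. You propose induction using only the letter-peeling recursion $\theta(A_\ell w)=[A_\ell,\theta(w)]$ and the fact that degree-$m$ Lie elements are spanned by $[A_\ell,L']$ with $L'$ of degree $m-1$. Expanding $\theta([A_\ell,L'])=\theta(A_\ell L')-\theta(L'A_\ell)$, the first term is handled by peeling, but the second is not: $L'$ is not a single letter, so the recursion says nothing about $\theta(L'A_\ell)$. The standard repair is to extend $A_\ell\mapsto \mathrm{ad}(A_\ell)$ to the algebra homomorphism $\mathrm{Ad}$ on words, prove by Jacobi-induction that $\mathrm{Ad}(u)=\mathrm{ad}(u)$ for every Lie element $u$, and deduce the stronger recursion $\theta(uw)=[u,\theta(w)]$ for \emph{Lie} $u$ (not just letters); then $\theta([v,w])=[v,\theta(w)]-[w,\theta(v)]=(p+q)[v,w]$ closes the induction. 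Since you also offer to quote the lemma directly from \cite{Dynkin} (or \cite{BF2}), the overall proof stands, but the inline induction needs this strengthening to be complete.
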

As before, we point out that the series on the right-hand side of \eqref{eq:BCDH} is in general not convergent and has to be interpreted as a formal series. The point here is that the coefficients on the right-hand side of~\eqref{eq:BCDH} match the ones on the right-hand side of~\eqref{eq:preBCH}. The proof of this fact is purely algebraic, see, for example, \cite{BF2,Dynkin}.
\subsection{Proof of Theorem \ref{thm:decompB}}
Now we return to the proof of Theorem \ref{thm:decompB}.

We will need the following two lemmas.
\begin{lemma}\label{lem:B+B-A}  Let $B_\pm$ be as in Theorem \ref{thm:decompB}. Then,
	for
	$ \ell \in \mathbb N$,
	$$B^\ell=(B_-+B_+)^\ell =\sum_{k=0}^\ell \begin{pmatrix} \ell \\ k \end{pmatrix}  B_-^k B_+^{\ell-k} + A,$$
	where $A$ is a matrix such that $AQ_s =0$ for some $s \in \mathbb N$ that depends on $B$ and $\ell$ only.
\end{lemma}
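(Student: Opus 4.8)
The plan is to isolate the purely algebraic mechanism behind the identity: modulo matrices with only finitely many nonzero columns, the two pieces $B_-$ and $B_+$ commute, so the ordinary binomial theorem applies and the difference $A$ is exactly the error coming from that commutation. First I would introduce the set
$$\mathcal N=\{A\in\mathcal B:\ AQ_s=0\text{ for some }s\in\mathbb N\},$$
the one-sided banded matrices supported on finitely many columns. Since $B_+\in\mathcal B$ (it is upper-banded) and $B_-\in\mathcal B$ (it is strictly lower triangular), the claim of the lemma is precisely the assertion that
$$B^\ell-\sum_{k=0}^\ell\binom{\ell}{k}B_-^k B_+^{\ell-k}\in\mathcal N,$$
so everything reduces to understanding $\mathcal N$ and working modulo it.

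The key structural step is to show that $\mathcal N$ is a two-sided ideal of the algebra $\mathcal B$. Closure under addition is clear. The left-ideal property $M\mathcal N\subseteq\mathcal N$ is immediate, since $AQ_s=0$ forces $MAQ_s=M(AQ_s)=0$. The right-ideal property is where the one-sided bandedness is genuinely used: if $A$ is supported on columns $<s$ and $M\in\mathcal B$ satisfies $M_{j,k}=0$ for $k>j+b$, then in $(AM)_{i,j}=\sum_k A_{i,k}M_{k,j}$ a nonzero summand needs both $k<s$ (from $A$) and $k\ge j-b$ (from $M$), hence $j<s+b$; thus $AMQ_{s+b}=0$ and $AM\in\mathcal N$. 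This is exactly the bandwidth bookkeeping already used in the proof of Theorem~\ref{lemmacumu}, now applied to a single column-support bound.

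Finally, assumption (3) of Theorem~\ref{thm:decompB} states precisely that $[B_-,B_+]\in\mathcal N$, i.e. the images of $B_-$ and $B_+$ commute in the quotient algebra $\mathcal B/\mathcal N$. The binomial theorem for two commuting elements then shows that $B^\ell$ and $\sum_{k}\binom{\ell}{k}B_-^k B_+^{\ell-k}$ have the same image in $\mathcal B/\mathcal N$, which is the desired conclusion (tracking constants, one gets $AQ_{s'}=0$ for some $s'\le s+\ell b$ rather than the original $s$). I expect the only point needing care to be the right-ideal property: it is what quantifies how far a commutator buried deep inside a product can ``leak'' to the right, and it is the reason one must allow a possibly larger $s'$. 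A more hands-on variant avoids the quotient altogether: straighten each of the $2^\ell$ words in the expansion of $(B_-+B_+)^\ell$ into its sorted form $B_-^k B_+^{\ell-k}$ by adjacent transpositions $B_+B_-=B_-B_+-[B_-,B_+]$; every correction term is then of the shape $M_1[B_-,B_+]M_2$ with $M_1,M_2$ products of $B_\pm$, and the same estimate places each such term in $\mathcal N$, giving the result directly.
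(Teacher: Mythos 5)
Your proof is correct, and while its core mechanism coincides with the paper's, your primary route is packaged genuinely differently. The paper's own proof is exactly your closing ``hands-on variant'': it expands $(B_-+B_+)^\ell$ into the $2^\ell$ words, sorts each word into the form $B_-^{k}B_+^{\ell-k}$ by successive adjacent swaps, and then checks that every correction term---a product containing $[B_-,B_+]$---has only finitely many non-trivial columns, via a case analysis (left multiplication by an arbitrary matrix preserves this property; right multiplication by a lower triangular matrix preserves it; right multiplication by a banded upper triangular matrix enlarges the column support by at most the bandwidth). Your main route---formalizing the finite-column-support matrices as a two-sided ideal $\mathcal N\subseteq\mathcal B$ and invoking the binomial theorem for the commuting images of $B_\pm$ in the quotient $\mathcal B/\mathcal N$---buys two things: your single right-ideal computation for a general one-sided banded $M$ subsumes the paper's two right-multiplication cases (lower triangular is just the case of band parameter $b=0$), and the binomial identity becomes automatic instead of requiring the word-sorting bookkeeping. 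The one point you should state explicitly is that $\mathcal B$ is an associative algebra (every entry of a product is a finite sum, thanks to one-sided bandedness), so that the quotient $\mathcal B/\mathcal N$ is well defined; the paper's observation that $\mathcal B$ is closed under addition and multiplication is what covers this. Your tracking of the constant, $AQ_{s'}=0$ with $s'\le s+\ell b$, is consistent with (and slightly more explicit than) the paper's, which simply takes the maximum of the supports $s_\sigma$ over the $2^\ell$ words.
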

\begin{proof}
Choose any $\sigma \in \{+,-\}^\ell$ and let $\ell_+$ and $\ell_-$ be the number of $+$ and $-$ entries in $\sigma$, respectively.
	By successively changing the order of $B_-$'s and $B_+$'s, it is not hard to see that
	$$\prod_{j =1}^\ell B_{\sigma(j)}=B_-^{\ell_+} B_+^{\ell_-}+A_\sigma,$$
	where $A_\sigma$ is a linear combination of terms with each term being a product of $B_-$'s, $B_+$'s and $[B_-,B_+]$ in a certain order. By assumption we have that $[B_-,B_+]$ has only finitely many non-trivial columns. Clearly, this structure is preserved if we multiply this commutator by an arbitrary matrix from the left. This also holds true if we multiply the commutator from the right by a lower triangular matrix. Finally, if we multiply from the right by an upper triangular matrix that is \emph{banded}, then  the number of non-trivial columns increases with the bandwidth $b$. Still, there are only finitely many non-trivial columns. Then there exists $s_\sigma$ such that $AQ_{s_\sigma} =0$. Hence the statement follows by taking $s=\max_{\sigma \in \{+,-\}^\ell}s_\sigma.$
\end{proof}
\begin{lemma} \label{lem:TrPnB+A} Let $B_\pm$ be as in Theorem \ref{thm:decompB}. Then
	$$\Tr P_n[B_+,A]P_n=\Tr P_n[B_-,A]P_n= 0.$$
	for any matrix $A$ such that  $AQ_s =0$ for some $s \in \mathbb N$ and for $n$ sufficiently large.
\end{lemma}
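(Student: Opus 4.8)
The plan is to reduce both identities to the elementary cyclicity of the trace, exploiting that the hypothesis $AQ_s=0$ forces $A$ to have only finitely many nonzero columns (all columns of index $\ge s$ vanish). Since $\Tr P_n X P_n=\sum_{i<n}X_{ii}$ for any infinite matrix $X$, it suffices to understand the diagonal entries of the four products $B_+A$, $AB_+$, $B_-A$, $AB_-$, and then to let $n\to\infty$.

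First I would show that each of these products has only finitely many nonzero diagonal entries, with an explicit threshold. For $(B_+A)_{ii}=\sum_k (B_+)_{ik}A_{ki}$ to be nonzero one needs $A_{ki}\ne 0$, hence the column index $i<s$; thus $(B_+A)_{ii}=0$ once $i\ge s$. For $(AB_+)_{ii}=\sum_k A_{ik}(B_+)_{ki}$ one needs $A_{ik}\ne 0$ (so $k<s$) together with $(B_+)_{ki}\ne 0$ (so $k\le i\le k+b$ by the bandedness in hypothesis (2)), and these force $i<s+b$. The strict lower triangularity of $B_-$ in hypothesis (1) gives, in the same fashion, that $(B_-A)_{ii}$ and $(AB_-)_{ii}$ vanish for $i\ge s$. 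Consequently, for every $n\ge s+b$ the partial traces stabilize to genuine finite sums:
$$\Tr P_n B_\pm A P_n=\Tr B_\pm A,\qquad \Tr P_n A B_\pm P_n=\Tr A B_\pm.$$

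It then remains to verify $\Tr B_\pm A=\Tr A B_\pm$. Writing the traces as double sums, $\Tr B_+A=\sum_{i,k}(B_+)_{ik}A_{ki}$ and $\Tr A B_+=\sum_{i,k}A_{ik}(B_+)_{ki}$; by the column support of $A$ and the bandedness of $B_+$ both sums contain only finitely many nonzero terms, so the relabeling $(i,k)\mapsto(k,i)$ carries one into the other and gives equality. The identical argument, now invoking the triangularity of $B_-$ rather than the bandedness of $B_+$, yields $\Tr B_-A=\Tr A B_-$. Combining this with the stabilization above gives $\Tr P_n[B_+,A]P_n=\Tr P_n[B_-,A]P_n=0$ for all $n\ge s+b$, which is the assertion.

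The only genuine obstacle is bookkeeping: one must be sure that inserting the projections $P_n$ does not break the cyclicity of the trace. This is precisely what the finiteness thresholds guarantee — for $n$ large the projections act as the identity on every row and column carrying a nonzero diagonal contribution, so $\Tr P_n X P_n$ coincides with the unprojected finite trace $\Tr X$, and ordinary cyclicity applies. The slightly larger threshold $s+b$ (as opposed to $s$) arises solely because right multiplication by the banded matrix $B_+$ widens the column support of $A$ by the bandwidth $b$.
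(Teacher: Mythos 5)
Your proof is correct and follows essentially the same route as the paper: both arguments exploit that $AQ_s=0$ confines all relevant sums to finitely many indices (at most $s$ in one variable and $s+b$ in the other, thanks to the bandedness of $B_+$ and triangularity of $B_-$), so that the projected traces reduce to the same finite double sum and cancel; your truncations $\Tr P_nB_+AP_n=\sum_{j\le s}\sum_{k\le s+b}(B_+)_{jk}A_{kj}=\Tr P_nAB_+P_n$ are exactly the paper's computation. The only cosmetic difference is in the $B_-$ case, where the paper instead writes $P_n[B_-,A]P_n$ as a difference of products of finite square matrices via $A=AP_n$ and $P_nB_-=P_nB_-P_n$ and invokes cyclicity of the finite-dimensional trace, while you reuse the same diagonal-support and relabeling argument — the underlying mechanism is identical.
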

\begin{proof}
	Let us start with  $\Tr P_n[B_+,A]P_n=0$. Then for $n$ sufficiently large,
	$$\Tr P_n B_+A P_n=\sum_{j=1}^n \sum_{k=1}^\infty (B_+)_{jk} A_{kj}= \sum_{j=1}^s \sum_{k=1}^\infty   (B_+)_{jk} A_{kj}=\sum_{j=1}^{s} \sum_{k=1}^{s+b}   (B_+)_{jk} A_{kj}. $$
	 On the other hand,
	$$\Tr P_n A B_+ P_n=\sum_{k=1}^n \sum_{j=1}^\infty A_{kj}  (B_+)_{jk} = \sum_{k=1}^n \sum_{j=1}^s  A_{kj}  (B_+)_{jk} =\sum_{k=1}^{s+b} \sum_{j=1}^{s}  A_{kj}  (B_+)_{jk}. $$
 Therefore, $ \Tr P_n[B_+,A]P_n=0$ for $n$ sufficiently large.

 The proof of $\Tr P_n[B_-,A]P_n=0$ is even easier. Since $A= AP_n$ for $n\ge s$ and $P_nB_-=P_n B_-P_n$ we have $P_n [B_-, A]P_n= P_n B_-P_n AP_n-P_n A P_n B_-P_n$. Each term is a product of two finite square matrices, and for such products the trace is cyclic.
\end{proof}
\begin{corollary}\label{cor:vanishingofnestedcom} Let $B_\pm$ be as in Theorem \ref{thm:decompB}.
	Let $m\geq 3$, $j \in \mathbb N$ and set $u_1+v_1+\dots+u_j+v_j=m$ where $u_i+v_i \geq 1$ for $i=1,\ldots, j$. Then $$\Tr P_n [B_-^{(u_1)}, B_+^{(v_1)},\cdots , B_-^{(u_j)}, B_+^{(v_j)}]P_n=0,$$
	for $n$ sufficiently large.
\end{corollary}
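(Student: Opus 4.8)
The goal is to show that the nested commutator trace
$\Tr P_n [B_-^{(u_1)}, B_+^{(v_1)},\cdots , B_-^{(u_j)}, B_+^{(v_j)}]P_n$
vanishes for $n$ large whenever $m = u_1+v_1+\dots+u_j+v_j \geq 3$. The plan is to reduce an arbitrary such nested commutator to the situation already handled by Lemma~\ref{lem:TrPnB+A}, namely a single commutator $[B_\pm, A]$ with $A$ having only finitely many non-zero columns (i.e.\ $AQ_s = 0$ for some $s$).

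First I would peel off the outermost bracket. By the definition of the nested commutator, the whole expression is $[X_1, R]$, where $X_1$ is the outermost entry (either $B_-$ or $B_+$, depending on whether $u_1 \geq 1$ or $u_1 = 0$) and $R = [X_2,\dots]$ is the inner nested commutator of length $m-1$. So it suffices to show that the inner object $R$ satisfies $R Q_s = 0$ for some $s$, and then invoke Lemma~\ref{lem:TrPnB+A} directly with $A = R$. The key structural fact I would establish is that \emph{any} commutator of at least two factors drawn from $\{B_-, B_+\}$ has finitely many non-zero columns, and that this property is preserved under taking further commutators with $B_-$ or $B_+$. The base case is assumption~(3) of Theorem~\ref{thm:decompB}: $[B_-,B_+]Q_s = 0$. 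Since $m \geq 3$, the inner commutator $R$ has length $m-1 \geq 2$, so it contains at least one innermost bracket of the form $[B_\pm, B_\pm]$. Commutators $[B_-,B_-]$ and $[B_+,B_+]$ vanish identically, and $[B_-,B_+]$ (up to sign) is exactly the matrix with finitely many columns from assumption~(3). Hence the innermost bracket already has the finite-column property.

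Next I would propagate this property outward through the nested structure. This is precisely the stability argument already carried out inside the proof of Lemma~\ref{lem:B+B-A}: if $A$ has finitely many non-zero columns, then so does $[B_-, A] = B_- A - A B_-$ and $[B_+, A] = B_+ A - A B_+$. Indeed, multiplying $A$ on the left by any matrix preserves the set of non-zero columns, multiplying on the right by the lower-triangular $B_-$ keeps finitely many non-zero columns, and multiplying on the right by the \emph{banded} upper-triangular $B_+$ can only spread each non-zero column by at most the bandwidth $b$, hence still leaves finitely many. Applying this inductively, working from the innermost bracket outward through all $m-2$ remaining layers of $R$, yields $R Q_s = 0$ for some finite $s$. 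With this in hand, Lemma~\ref{lem:TrPnB+A} gives $\Tr P_n [X_1, R] P_n = 0$ for $n$ sufficiently large, which is the claim.

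The main obstacle is purely bookkeeping: one must verify carefully that the finite-column property survives \emph{every} layer of the nested commutator, including the degenerate cases where a block of repeated $B_-$'s or $B_+$'s appears (the ``powers'' $B_\pm^{(\ell)}$). Repeated left-multiplication by $B_-$ or $B_+$ never creates new non-zero columns, so these cause no difficulty; the only genuine spreading comes from right-multiplication by $B_+$, and this is controlled by the bandedness in assumption~(2). I do not anticipate an analytic difficulty, since all matrices involved act on finitely many coordinates in the relevant window; the content is entirely in organizing the induction on the commutator depth and correctly identifying which factor plays the role of the outer $X_1$ and which nested object plays the role of $A$ in Lemma~\ref{lem:TrPnB+A}.
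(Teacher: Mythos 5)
Your proposal is correct and follows essentially the same route as the paper: both proofs induct from the innermost bracket (which is either identically zero or $\pm[B_-,B_+]$, hence has finitely many non-zero columns by assumption (3)) outward through the nested structure, using that left multiplication and right multiplication by the lower-triangular $B_-$ or the banded upper-triangular $B_+$ preserve the finite-column property, and then conclude by applying Lemma~\ref{lem:TrPnB+A} to the outermost commutator. The only cosmetic difference is that the paper splits off the degenerate cases ($v_j>1$, or $v_j=0$ with $u_j>1$) as trivially zero, while you absorb them uniformly by noting the zero matrix also has the finite-column property.
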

\begin{proof}
If $v_j>1$, the nested commutator is trivially zero. If $v_j=1$, then
\begin{equation}
[B_-^{(u_1)}, B_+^{(v_1)},\cdots , B_-^{(u_j)}, B_+^{(v_j)}]=[X_1,X_2,\dots,X_r],
\end{equation}
where $r=u_1+v_1+\dots +u_j+v_j$,
 and where each $X_i=B_-$ or $B_+$, and $X_{r-1}=B_-$ and $X_r=B_+$. Let
\begin{equation}
Y_i=[X_i,X_{i+1},\dots,X_r],\qquad i=1,2,\dots,r-1.
\end{equation}
Then $Y_{r-1}=[B_-,B_+]$, and by assumption it has a finite number of non-trivial columns. We assume that this property holds true for $Y_i$, and prove the same for $Y_{i-1}$.
By definition, $Y_{i-1}=[X_{i-1},Y_i]$. Since $Y_i$ has only a finite number of non-trivial columns by assumption, and $X_{i-1}$ is either lower triangular or upper triangular with finite bandwidth, it follows that both $X_{i-1}Y_i$ and $Y_iX_{i-1}$ have only a finite number of non-trivial columns. The same property therefore holds true for  $Y_{i-1}$ and by induction for $Y_2$. 
By Lemma~\ref{lem:TrPnB+A} we obtain ${\rm Tr} P_nY_1P_n=0$, which proves the corollary for $v_j=1$.

If $v_j=0$ and $u_j>1$, then the nested commutator is trivially zero. If $v_j=0$ and $u_j=1$, the arguments above apply, with the right-most part of the nested commutator being equal to $[B_+,B_-]$ instead of $[B_-,B_+]$.

\end{proof}
Now we are ready for the proof of Theorem \ref{thm:decompB}.
\begin{proof}[Proof of  Theorem \ref{thm:decompB}]
	
	Our starting point is the following expression from the proof of Theorem~\ref{lemmacumu}:
	\begin{multline}
	{\rm Tr} P_nB^{l_1}P_n \dots B^{l_j}P_n-{\rm Tr} P_nB^mP_n\\=-\sum_{s=1}^n\sum_{r_1,\dots,r_{j-1}\in I_{j,n}}\left(B^{l_1}\right)_{sr_1}\left(B^{l_2}\right)_{r_1r_2}\dots \left(B^{l_j}\right)_{r_{j-1}s},
	\end{multline}
	where
	\begin{equation}\nonumber
	I_{j,n}=\{r_1,\dots,r_{j-1}: r_i>n \textrm{ for some }i=1,\dots,j-1\}.
	\end{equation}
	As argued in the proof of Theorem~\ref{lemmacumu}, the one-side banded structure shows that this only depends on $s$ and $r_j$ that are within a certain distance of $n$. In particular, one has $s>n-m^2 b$ and $r_i>n-m^2 b$ for all $i=1,2,\dots,j-1$, and thus by Lemma \ref{lem:B+B-A}, it follows that, for sufficiently large $n$,
\begin{multline}
{\rm Tr} P_nB^{l_1}P_n \dots B^{l_j}P_n-{\rm Tr} P_nB^mP_n\\=-\sum_{u_1=0}^{l_1} \cdots 	\sum_{u_j=0}^{l_j} \begin{pmatrix} l_1 \\ u_1\end{pmatrix}\cdots \begin{pmatrix} l_j \\ u_j\end{pmatrix}\\ \Tr \sum_{s=1}^n\sum_{r_1,\dots,r_{j-1}\in I_{j,n}}\left(B_-^{u_1} B_+^{v_1}\right)_{sr_1}\left(B_-^{u_2} B_+^{v_2}\right)_{r_1r_2}\dots \left(B_-^{u_j} B_+^{v_j}\right)_{r_{j-1}s},
\end{multline}
where $v_j= l_j-u_j$. Note that we used that contributions from various matrices $A$ (see lemma  \ref{lem:B+B-A}) disappear since $s>n-m^2 b$ and $r_i>n-m^2 b$.

 Therefore
		\begin{multline}
\Tr \left( \frac{P_nB^{l_1}P_n \dots B^{l_j}P_n- P_nB^mP_n}{l_1!\cdots l_j!}\right)\\=
\Tr \sum_{u_1=0}^{l_1} \cdots 	\sum_{u_j=0}^{l_j} \frac{   P_nB_-^{u_1}  B_+^{v_1}P_n \dots P_n B_-^{u_j} B_+^{v_j} P_n- P_n B_-^{u_1}  B_+^{v_1} \dots  B_-^{u_j} B_+^{v_j}P_n}{u_1!v_1 ! \cdots u_j! v_j!}
 \end{multline}

 By the triangular structure, we have $ P_n B_-=P_n B_- P_n$ and $B_+P_n=P_nB_+P_n$, and hence we can write

 	\begin{multline}
 \Tr \left( \frac{P_nB^{l_1}P_n \dots B^{l_j}P_n- P_nB^mP_n}{l_1!\cdots l_j!}\right)=\\
	  \Tr \sum_{u_1=0}^{l_1} \cdots 	\sum_{u_j=0}^{l_j}  \frac{   (P_nB_- P_n)^{u_1}  (P_n B_+ P_n)^{v_1} \dots (P_n B_- P_n)^{u_j} (P_nB_+P_n)^{v_j} }{u_1!v_1 ! \cdots u_j! v_j!}
	  \\
	  - \Tr 	\sum_{u_1=0}^{l_1} \cdots 	\sum_{u_j=0}^{l_j} \frac{ P_n B_-^{u_1}  B_+^{v_1} \dots  B_-^{u_j} B_+^{v_j}P_n}{u_1!v_1 ! \cdots u_j! v_j!}
	\end{multline}
	for $n$ sufficiently large. Now, by Corollary \ref{cor:trexpa} we can ignore the contribution of the first term to $C_m^{(n)}(B)$ and write
		\begin{multline}
	C_m^{(n)}(B)
	=\\-m! \Tr  \sum_{j=1}^m \frac{(-1)^{j+1}}{j} \sum_{\substack{u_1+v_1+\dots+u_j+v_j=m\\ u_i+v_i \geq 1}} 	\frac{   P_n B_-^{u_1}  B_+^{v_1} \dots  B_-^{u_j} B_+^{v_j}P_n}{u_1!v_1 ! \cdots u_j! v_j!}\\
	=-m! \Tr  P_n \left( \sum_{j=1}^m \frac{(-1)^{j+1}}{j} \sum_{\substack{u_1+v_1+\dots+u_j+v_j=m\\ u_i+v_i \geq 1}} 	\frac{   B_-^{u_1}  B_+^{v_1} \dots  B_-^{u_j} B_+^{v_j}}{u_1!v_1 ! \cdots u_j! v_j!}\right) P_n,
	\end{multline}
	for $m \geq 2$. By \eqref{eq:preBCH} and the Baker--Campbell--Hausdorff theorem we can rewrite this as nested commutators. Then by applying Corollary \ref{cor:vanishingofnestedcom} we see that $C_m^{(n)}(B)=0$ for $m \geq 3$ as claimed.
\end{proof}

	\section{Proof of Theorem \ref{thm:varying}}
	In this section we prove our main result, Theorem \ref{thm:varying}. The starting point is that the moment-generating function for the linear statistic can be written as a determinant involving the exponential of a one-sided banded matrix. We then employ the results from the previous section to prove Theorem \ref{thm:varying}.

\subsection{Cumulants}

 We start by recalling the definition of the cumulants for $X_n(f)= \sum_{j=1}^n f(x_j)$.
For $r=1,2,\dots$, consider
 $$ \mathbb E[\exp_r  (\lambda X_n(f))]=1+\sum_{j=1}^r \frac{\lambda^jM_j^{(n)}(f)}{j!},$$ where $M_j^{(n)}(f)=\mathbb E\left[\left(X_n(f)\right)^j\right]$ are the moments (we chose to work with $\exp_r$ since the moment-generating function is not necessarily well-defined. An alternative is to work with formal expressions).  The cumulants $C_m(X_n(f))$ are defined by
\begin{equation}\label{defCum}
\log \mathbb E\left[\exp_r (\lambda X_n(f))   \right]= \sum_{m=1}^rC_m (X_n(f))\frac{\lambda^m}{m!}+\mathcal O\left(\lambda^{r+1}\right),
\end{equation}
for $\lambda$ in a neighbourhood of $0$.
It is easily verified that $C_m(X_n(f))$ defined in such a manner is indeed independent of $r>m$.

The first step is to give a formula for the cumulants $C_m(X_n(f))$ in terms of $f(J)$. We start with the following observation.

\begin{lemma}
Let $\vec k_i$ be a path satisfying \eqref{eq:paths}, and let $x_j$ for $j=1,\ldots,n$ be points randomly taken from \eqref{eq:MOPE} with $\vec k=\vec k_n$. Assume that $\{\mu_j(x)\}_{j=1}^m$ defined in \eqref{eq:orthomeasures} is a perfect system. Then for any $\lambda\in \mathbb C$ and polynomial $f$, we have
\begin{equation}\label{Eexpr}
\mathbb E\left[\exp_r\left(\lambda \sum_{j=1}^nf(x_j)\right)    \right]=\det (Q_n+ P_n\exp_r(\lambda f(J))P_n)+\mathcal O\left(\lambda^{r+1}\right),
\end{equation}
for $\lambda$ in a neighbourhood of $0$,
with  $J$ as defined in \eqref{eq:generalJacobi}.
\end{lemma}
\begin{remark}
	We will allow $J,\mu$ and $w$ to depend on $n$. To avoid cumbersome notation we will supress the $n$-dependence  and just write  $J,\mu$ and $w$ from now on.
\end{remark}
\begin{proof}
	This lemma can be found in \cite[Lemma 4.1]{Duits2} in a more general setup. For clarity and completeness we include a proof here.
	
	We start with a standard biorthogonalization procedure. Let $G$ be the $n\times n$ matrix given by
	\begin{equation} G_{i,j}=\int g_i (x)p_{\vec k_{j-1}} (x)d\mu(x), \qquad i,j=1,\dots,n.\end{equation} 
	Then, by Andreief's identity, it follows that
	\begin{multline} \label{Andreief}
	\frac{1}{n!}\int\dots \int \det\left(p_{\vec k_{j-1}}(x_i) \right)_{i,j=1}^n \det \left(g_j(x_i)\right)_{i,j=1}^n \prod_{i=1}^n d\mu(x_i)
	\\=\det \left(\int   p_{\vec k_{i-1}}(x)g_{j}(x)d\mu(x)\right)_{i,j=1}^n=\det G,
	\end{multline}
	and thus the normalizing constant for \eqref{eq:MOPE} equals  $Z_{\vec{k}}=n!\det G $. Since \eqref{eq:MOPE} is assumed to be a probability measure, this determinant does not vanish and thus $G$ is invertible. If we further define
	\begin{equation} \begin{pmatrix}
	\tilde  g_1(x)\\
\tilde g_2(x)\\
	\vdots \\
\tilde g_{n}(x)
	\end{pmatrix}
	=G^{-1}
	\begin{pmatrix} 
	g_1(x)\\
g_2(x)\\
	\vdots \\
	g_{n}(x)
	\end{pmatrix},
	\end{equation}
	then 
	\begin{align*}\int \tilde g_{i}(x)p_{\vec k_{j-1}}(x)d\mu(x)&=\sum_{l=1}^n(G^{-1})_{i,l}\int g_{l}(x) p_{\vec k_{j-1}}(x) d\mu(x)\\
	&=\sum_{l=1}^n(G^{-1})_{i,l} G_{l,j}= \delta_{i,j}.
	\end{align*}
	showing that $\{p_{\vec k_{j}}\}_{j=0}^{n-1}$ and $\{\tilde g_j\}_{j=1}^n$ are biorthogonal. From the biorthogonality we find that 
	\begin{equation}\label{eq:jacobimonomial}
	\int  x^\nu p_{\vec k_{i}}(x) \tilde g_{j}(x)d\mu(x)=\int \sum_{s=0}^{i+\nu}\left(J^\nu\right)_{i,s}p_{\vec k_{s}}(x) \tilde g_j(x)d\mu(x)=
	\left(J^{\nu}\right)_{i,j},
	\end{equation}
	for $\nu=1,2,3,\dots$ and $i,j=1,\ldots, n$. Also, by standard row and column operations, the measure \eqref{eq:MOPE} can be written as
	\begin{equation}\label{measure2}
	\frac{1}{n!} \det \big( p_{\vec k_{j-1}}(x_i)\big)_{i,j=1}^{n}\det\big(\tilde g_j (x_i)\big) _{i,j=1}^{n} \prod_{j=1}^{n} {\rm d} \mu(x_j),
	\end{equation}
	where the normalization constant is simply $n!$ by Andrei\'ef's identity and biorthogonality.

Now that we have finished the biorthogonalization procedure, let us return to the claim of the lemma. We start by noting that
	\begin{equation}
	\mathbb E\left[\exp_r\left(\lambda \sum_{j=1}^nf(x_j)\right)    \right]=
	\mathbb E\left[\prod_{j=1}^n \exp_r\left(\lambda f(x_j)\right)    \right]+\mathcal O\left(\lambda^{r+1}\right),
	\end{equation}
	for $\lambda$ in a neighbourhood of $0$. 
	Then it is easily verified, using Andrei\'ef's identity, that 
	$$
\mathbb E\left[\prod_{j=1}^n \exp_r\left(\lambda f(x_j)\right)    \right]=	\det \left(\int  \exp_r(\lambda f(x))p_{\vec k_{i-1}}(x) \tilde g_{j}(x) d\mu(x)\right)_{i,j=1}^{n}.
	$$
	and since $\exp_r(\lambda f(x))$ is a sum of monomials (in $x$), the lemma follows from \eqref{eq:jacobimonomial}.
\end{proof}

Together with Lemma \ref{lem:cumutraces} we thus find the following expression for the cumulants.
\begin{corollary} \label{cor:cumu}
For $m,n=1,2, \ldots$,
	\begin{equation}\label{eqCum}
	C_m(X_n(f))=C_m^{(n)}(f(J)),
	\end{equation}
	where
	$$C_m^{(n)}( f(J))=m!\sum_{j=1}^m \frac{(-1)^{j+1}}{j}\sum_{\substack{l_1+\dots+l_j=m\\ l_i\geq 1}}\frac{{\rm Tr} P_n f(J)^{l_1}P_n\dots  f(J)^{l_j} P_n}{l_1!\dots l_j!},$$
and $P_n$ is the projection onto the first $n$ dimensions of $\ell_2(\mathbb N)$.
	\end{corollary}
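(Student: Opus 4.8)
The plan is to combine the determinantal identity \eqref{Eexpr} of the preceding lemma with the trace expansion of Lemma~\ref{lem:cumutraces}, and then to match coefficients against the defining relation \eqref{defCum} for the cumulants. First I would record that $J$ is lower Hessenberg, so $J_{n,j}=0$ whenever $j\ge n+2$; hence $J\in\mathcal B$ with bandwidth $b=1$, and since $f$ is a polynomial, $f(J)$ is again a one-sided banded matrix in $\mathcal B$. This is precisely the hypothesis needed to apply Lemma~\ref{lem:cumutraces} with $B=f(J)$ and $t=\lambda$.

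The one bookkeeping identification to make is between the two determinants appearing in the lemmas. The operator $P_n\exp_r(\lambda f(J))P_n+Q_n$ acts as the finite $n\times n$ block $P_n\exp_r(\lambda f(J))P_n$ on the first $n$ coordinates and as the identity $Q_n$ on the orthogonal complement, so its determinant equals that of the finite block:
\[
\det\bigl(P_n\exp_r(\lambda f(J))P_n+Q_n\bigr)=\det P_n\exp_r(\lambda f(J))P_n.
\]
Combining this with \eqref{Eexpr} yields
\[
\mathbb E\left[\exp_r\left(\lambda\sum_{j=1}^n f(x_j)\right)\right]=\det\bigl(P_n\exp_r(\lambda f(J))P_n+Q_n\bigr).
\]

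Taking logarithms of both sides and invoking Lemma~\ref{lem:cumutraces} with $B=f(J)$, I would obtain
\[
\log\mathbb E\left[\exp_r(\lambda X_n(f))\right]=\sum_{m=1}^r\frac{\lambda^m}{m!}\,C_m^{(n)}(f(J))+\mathcal O(\lambda^{r+1}),
\]
as $\lambda\to0$. On the other hand, the definition \eqref{defCum} expands the identical left-hand side as $\sum_{m=1}^r C_m(X_n(f))\,\lambda^m/m!+\mathcal O(\lambda^{r+1})$. Comparing the coefficients of $\lambda^m/m!$ for each fixed $m\le r$ gives the claimed identity $C_m(X_n(f))=C_m^{(n)}(f(J))$; since both sides are independent of $r$ for $r>m$, the corollary follows.

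There is essentially no deep obstacle here: the statement is a direct consequence of the two lemmas. The only points requiring a moment of care are the determinant identification displayed above and the verification that $f(J)\in\mathcal B$, which guarantees that the truncation $\exp_r$ and the trace manipulations underlying Lemma~\ref{lem:cumutraces} are legitimate; both are immediate from the Hessenberg structure of $J$.
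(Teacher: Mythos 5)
Your proposal is correct and follows exactly the paper's (implicit) argument: the paper obtains this corollary by combining the determinantal identity \eqref{Eexpr} with Lemma~\ref{lem:cumutraces} and the definition \eqref{defCum}, precisely as you do. Your additional explicit checks --- that $f(J)\in\mathcal B$ by the Hessenberg structure of $J$, and that $\det\bigl(P_n\exp_r(\lambda f(J))P_n+Q_n\bigr)$ equals the determinant of the finite block --- are the right bookkeeping details and introduce no gap.
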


\subsection{Comparison with the right limit}

Our next step is to show that the results of Section \ref{sec:one-sided} imply the following lemma.

 \begin{lemma} \label{lem:fJtoTfa}
	Under the conditions of Theorem \ref{thm:varying} we have that
	\begin{equation}
	\lim_{n\to \infty} C_m^{(n)}\left(f(J)\right)-C_m^{(n)}\left(T_{f \circ c} \right)=0,
	\end{equation}
	for any polynomial $f$, with
	\begin{equation} \label{eq:defr}
	\left(f \circ c\right)(z)= f\left(z+\sum_{j=1}^k \frac{a_j}{z-b_j}\right),
	\end{equation}
	and $T_{f \circ c}$ is
    given by the matrix defined in~\eqref{eq:matrixTc} but with $\tau_c$  replaced by $\tau_{f\circ c}$.
\end{lemma}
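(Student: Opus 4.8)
The plan is to deduce the claim from the cumulant-comparison result Theorem~\ref{lemmacumu}, applied to $B^{(1)}=f(J)$ and $B^{(2)}=T_{f\circ c}$. I would verify its two hypotheses: that both matrices lie in $\mathcal B$, and that they share the same right limit, i.e.
$$\left(f(J)\right)_{n+s,n+r}-\left(T_{f\circ c}\right)_{n+s,n+r}\to 0,\qquad n\to\infty,$$
for each fixed $s,r$. Membership in $\mathcal B$ is immediate: $J$ is lower Hessenberg, hence one-sided banded with bandwidth $1$, so $f(J)\in\mathcal B$ with bandwidth $\deg f$; and since $\tau_{f\circ c}$ raises the degree of $\pi_{\vec k_l}$ by at most $\deg f$ (as $(f\circ c)(z)\sim z^{\deg f}$ at infinity), the matrix $T_{f\circ c}$ is one-sided banded with bandwidth $\deg f$ as well. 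Once the right-limit identity is established, Theorem~\ref{lemmacumu} yields $C_m^{(n)}(f(J))-C_m^{(n)}(T_{f\circ c})\to 0$ for every fixed $m\ge 2$, which is the assertion. I would prove the right-limit identity in two steps, interpolating through $f(T_c)$.

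The first step is to show that $f(J)$ and $f(T_c)$ have the same right limit. By Theorem~\ref{thm:rightlimitMOPnevai} the matrices $J=J^{(n)}$ and $T_c$ already satisfy~\eqref{eqnRightLim}. I would then use the stability of right limits under products: if $A^{(n)},C\in\mathcal B$ have uniformly bounded bandwidth, $C$ is fixed with $\sup_n|C_{n+s,n+r}|<\infty$ for each $s,r$, and $A^{(n)}_{n+s,n+r}-C_{n+s,n+r}\to 0$, then $(A^{(n)})^2$ and $C^2$ have the same right limit. Indeed, the one-sided band structure confines the summation index $t$ in $\sum_t A^{(n)}_{n+s,n+t}A^{(n)}_{n+t,n+r}$ to a fixed finite window independent of $n$, on which the factors are bounded and their pairwise differences vanish, so the difference of the two products tends to $0$; iterating covers every power and hence $f$. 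The boundedness of the diagonals of $C=T_c$ holds because, as seen in the proof of Theorem~\ref{thm:rightlimitMOPnevai}, the entries of $T_c$ are finite sums of products of the fixed numbers $a_\ell$ with differences $\tilde B_{n,\ell}=b_\ell-b_{j_{n-1}}$, which take only finitely many values.

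The second step is to show that $f(T_c)$ and $T_{f\circ c}$ have the same right limit, and I expect this to be the heart of the matter. Since taking the matrix of an operator in the fixed basis $\{\pi_{\vec k_n}\}$ is an algebra homomorphism, $f(T_c)$ is exactly the matrix of the operator $f(\tau_c)$, while $T_{f\circ c}$ is the matrix of $\tau_{f\circ c}$. The key observation is that the operator defect $f(\tau_c)-\tau_{f\circ c}$ has finite-dimensional image, contained in a space of polynomials of bounded degree. This comes from the multiplicativity defect of the Toeplitz operator: writing $\tau_r\tau_s-\tau_{rs}=-P M_r(I-P^*P)M_s P^*$, the factor $(I-P^*P)M_s P^*$ sends any polynomial to the principal part of $s\cdot p$, which lies in the finite-dimensional span of $\{(z-b_j)^{-i}\}$, and applying $P M_r$ lands in a fixed finite-dimensional space of polynomials. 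A telescoping induction $\tau_c^{p}-\tau_{c^p}=\tau_c(\tau_c^{p-1}-\tau_{c^{p-1}})+(\tau_c\tau_{c^{p-1}}-\tau_{c^{p}})$ upgrades this from $\tau_r\tau_s$ to $f(\tau_c)-\tau_{f\circ c}$. Because $\{\pi_{\vec k_n}\}$ is a graded basis with $\deg\pi_{\vec k_n}=n$, any operator whose image lies in the polynomials of degree $\le N$ is represented by a matrix supported in rows $0,\dots,N$; hence $f(T_c)-T_{f\circ c}$ has only finitely many nonzero rows, and in particular $(f(T_c)-T_{f\circ c})_{n+s,n+r}=0$ for all large $n$. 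Combining the two steps gives the desired right-limit identity.

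The main obstacle is this second step, and specifically the temptation to pass through the monomial basis via Lemma~\ref{S-1TS}: there the defect $f(\mathcal T_c)-\mathcal T_{f\circ c}$ is again supported in finitely many top rows, but conjugating by the (potentially unbounded) change-of-basis matrix $S$ could destroy the right-limit property. Working intrinsically with the operators $f(\tau_c)$ and $\tau_{f\circ c}$ in the $\pi$-basis avoids this entirely, because the property of having finitely many nonzero rows is shared by every graded basis. The remaining points---uniform bandedness, boundedness of the diagonals of $T_c$, and the finite-window product estimate---are routine.
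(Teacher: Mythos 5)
Your proposal is correct and follows essentially the same route as the paper: you interpolate through $f(T_c)$, using Theorem \ref{lemmacumu} together with Theorem \ref{thm:rightlimitMOPnevai} to compare $f(J)$ with $f(T_c)$, and then an operator-level argument (the paper's Lemma \ref{LemmaFin}) showing that $f(\tau_c)-\tau_{f\circ c}$ has image in polynomials of bounded degree, so that $f(T_c)$ and $T_{f\circ c}$ agree in all entries $(n+s,n+r)$ for large $n$. The only cosmetic differences are that your multiplicativity-defect identity with telescoping induction replaces the paper's direct induction on powers of $PM_cP^*$, and that with the paper's convention \eqref{eq:matrixTc} the defect matrix is supported in finitely many \emph{columns} rather than rows, which does not affect the conclusion.
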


Before we come to the proof of this lemma, we first mention an easier consequence of Theorem~\ref{lemmacumu}.

 \begin{corollary}\label{cor:fJTa}
 	Under the conditions of Theorem \ref{thm:varying} we have that
	 	\begin{equation}
 			\lim_{n\to \infty} C_m^{(n)}\left(f(J)\right)-C_m^{(n)}\left(f\left(T_c \right)\right)=0,
	 	\end{equation}
	 	for any polynomial $f$.
 \end{corollary}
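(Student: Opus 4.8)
The plan is to deduce Corollary~\ref{cor:fJTa} directly from Theorem~\ref{lemmacumu} by choosing the two one-sided banded matrices appropriately. The natural candidates are $B^{(1)}=f(J)$ and $B^{(2)}=f(T_c)$. Both lie in $\mathcal B$: indeed $J=J^{(n)}$ and $T_c$ are lower Hessenberg, hence one-sided banded (vanishing strictly above the first superdiagonal), and $\mathcal B$ is closed under polynomial functional calculus, so $f(J)$ and $f(T_c)$ belong to $\mathcal B$ for any polynomial $f$. Thus the hypotheses of Theorem~\ref{lemmacumu} make sense for this pair, and the conclusion~\eqref{limCmn} is precisely the assertion of the corollary once we verify the right-limit condition~\eqref{Rightlimassum}.

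The core of the argument is therefore to check that $f(J)$ and $f(T_c)$ have the same right-limit along the diagonal, i.e.
\begin{equation}
\bigl| \bigl(f(J)\bigr)_{n+i,n+j}-\bigl(f(T_c)\bigr)_{n+i,n+j}\bigr|\to 0,\qquad n\to\infty,
\end{equation}
for each fixed $i,j$. First I would record the fact, supplied by Theorem~\ref{thm:rightlimitMOPnevai}, that $J^{(n)}$ and $T_c$ already share their right-limit:
\begin{equation}
(J^{(n)})_{n+s,n+r}-(T_c)_{n+s,n+r}\to 0,\qquad \forall s,r\in\mathbb Z.
\end{equation}
The task is to promote this from $J$ and $T_c$ to their images under the fixed polynomial $f$. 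Writing $f(x)=\sum_{p} c_p x^p$, it suffices to handle a single power $x^p$ and then sum, so the heart of the matter is to show that the right-limit property is preserved under taking powers of one-sided banded matrices.

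The key step is thus a short lemma to the effect that if $A^{(n)},B^{(n)}\in\mathcal B$ (with a common bandwidth $b$) satisfy $A^{(n)}_{n+s,n+r}-B^{(n)}_{n+s,n+r}\to 0$ for all fixed $s,r$, then the same holds for $(A^{(n)})^p-(B^{(n)})^p$. I would prove this by expanding the $(n+i,n+j)$ entry of a product as a finite sum over intermediate indices and using the one-sided bandedness to localize: because $A^{(n)}$ and $B^{(n)}$ vanish above the $b$-th superdiagonal, every nonzero term in $(A^{(n)})^p_{n+i,n+j}$ involves only entries indexed within a bounded window $|\cdot-n|\le pb+\max(|i|,|j|)$ of the diagonal, so only finitely many entries (uniformly in $n$) contribute. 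A standard telescoping estimate $X^p-Y^p=\sum_{k} X^k(X-Y)Y^{p-1-k}$, combined with the uniform boundedness of the relevant localized entries (which follows since the right-limits exist) and the entrywise convergence $X-Y\to 0$ on that finite window, then forces the difference of the $(n+i,n+j)$ entries to zero. The main obstacle is precisely this uniform control: one must be careful that the entries of $A^{(n)},B^{(n)}$ appearing in the bounded window stay bounded in $n$, which is exactly what the existence of the right-limits~\eqref{eq:multipleNevaivarying} guarantees. Once this power-preservation lemma is in hand, linearity gives the right-limit for $f(J)$ versus $f(T_c)$, and Theorem~\ref{lemmacumu} applied to $B^{(1)}=f(J)$, $B^{(2)}=f(T_c)$ yields $C_m^{(n)}(f(J))-C_m^{(n)}(f(T_c))\to 0$, completing the proof.
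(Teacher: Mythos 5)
Your proposal is correct and follows essentially the same route as the paper: invoke Theorem~\ref{thm:rightlimitMOPnevai} for the right-limit of $J^{(n)}$ versus $T_c$, observe that applying the polynomial $f$ to these one-sided banded matrices preserves this entrywise right-limit convergence, and then conclude via Theorem~\ref{lemmacumu} with $B^{(1)}=f(J)$, $B^{(2)}=f(T_c)$. The only difference is that the paper asserts the polynomial-preservation step in a single line, whereas you spell out the localization, telescoping, and uniform-boundedness argument behind it --- a legitimate filling-in of detail, not a different proof.
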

\begin{proof}
	Since $J$ is banded from one-side and $f$ is a polynomial, Theorem~\ref{thm:rightlimitMOPnevai} implies
\begin{equation}\label{eqnRightLim2}
(f(J))_{n+\ell,n+m}-\left(f\left(T_c\right)\right)_{n+\ell,n+m}\to 0,\end{equation}
as $n\to \infty$. The statement now follows from Theorem \ref{lemmacumu}.
\end{proof}

\begin{lemma}\label{LemmaFin}
	Let $f$ be a polynomial.
	Then, for any polynomial $p$,
	\begin{equation}
	\tau_{f \circ c}p-f\left(\tau_c \right)p
	\end{equation}
	is a polynomial of degree less than or equal to $\deg f-1$.
\end{lemma}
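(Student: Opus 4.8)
The plan is to exploit linearity of the map $r\mapsto\tau_r$ to reduce to powers of $c$, and then to control the defect measuring the failure of $\tau_c$ to be multiplicative with itself. Writing $f(w)=\sum_{k=0}^{K}\lambda_k w^k$ with $K=\deg f$, we have $f\circ c=\sum_k \lambda_k c^k$, hence $\tau_{f\circ c}=\sum_k\lambda_k\tau_{c^k}$, while $f(\tau_c)=\sum_k\lambda_k\tau_c^k$. Thus $\tau_{f\circ c}-f(\tau_c)=\sum_k\lambda_k D_k$ where $D_k:=\tau_{c^k}-\tau_c^k$, and it suffices to show that for each $k$ the operator $D_k$ sends every polynomial $p$ to a polynomial of degree at most $k-1$ (uniformly in $\deg p$); summing over $k\le K$ then yields the bound $\deg f-1$.

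First I would record the general composition defect: for rational functions $r,s$ and a polynomial $p$, decomposing $sp=P(sp)+(sp)_{\mathrm{pp}}$ into its polynomial part and its strictly proper principal part, one gets $\tau_{rs}p=P(r\,sp)=\tau_r(\tau_s p)+P\big(r\,(sp)_{\mathrm{pp}}\big)$, so the defect is exactly $P\big(r\,(sp)_{\mathrm{pp}}\big)$. Applying this with $r=c$ and $s=c^{k-1}$ gives $\tau_{c^k}p=\tau_c\tau_{c^{k-1}}p+P\big(c\,(c^{k-1}p)_{\mathrm{pp}}\big)$, and the crucial step is to show that this single-step defect is a \emph{constant}. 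Here I would use the explicit splitting $c=z+c_-$ with $c_-(z)=\sum_{j}a_j/(z-b_j)$ vanishing at infinity. The principal part $(c^{k-1}p)_{\mathrm{pp}}$ is strictly proper with poles only at the $b_j$; therefore $c_-\cdot(c^{k-1}p)_{\mathrm{pp}}$ is again strictly proper and is annihilated by $P$, while for the remaining $z$-part one checks $P\big(z/(z-b_j)^\ell\big)=0$ for $\ell\ge 2$ and $=1$ for $\ell=1$. Hence only the simple-pole residues contribute, and $P\big(c\,(c^{k-1}p)_{\mathrm{pp}}\big)=\sum_j\Res_{z=b_j}\big(c^{k-1}p\big)$ is indeed a constant.

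With this in hand, set $D_0=D_1=0$ and note the recursion $D_k p=\tau_c D_{k-1}p+\gamma_k(p)$, where $\gamma_k(p)$ is the constant just computed. Since $\tau_c p=zp+P(c_- p)$ raises degree by at most one, an induction gives $\deg(D_k p)\le k-2$: one has $\deg D_2 p\le 0$, and $\deg D_k p\le 1+\deg D_{k-1}p\le k-2$. Summing, $\tau_{f\circ c}p-f(\tau_c)p=\sum_{k=2}^{K}\lambda_k D_k p$ has degree at most $\deg f-2\le\deg f-1$, as claimed (for $\deg f\le 1$ the two operators already coincide). The main obstacle is precisely the single-step defect computation: one must control the orders of the poles of $c^{k-1}$, which may be as large as $k-1$, and verify that $P$ kills every polar term except the simple-pole contribution. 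Once that identity is established, the remainder is a routine degree induction.
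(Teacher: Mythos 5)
Your proof is correct and follows essentially the same route as the paper: reduce to powers of $c$ by linearity, set up the recursion $D_k p = \tau_c D_{k-1}p + \text{(defect)}$, and observe that the single-step defect is a constant — this is precisely the paper's parenthetical key fact that $P M_c\left(g - P^*Pg\right)$ is constant for any rational $g$. The only difference is that you spell out this constancy explicitly (splitting $c = z + c_-$ and computing $P\bigl(z/(z-b_j)^\ell\bigr)$, identifying the constant as a sum of residues), which the paper merely asserts.
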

\begin{proof}
	We recall that $M_c$ is the multiplication operator on $\mathcal R$ with multiplier $c(z)=z+ \sum_{j=1}^m \frac{a_j}{z-b_j}$.
		By definition, we have
	\begin{equation}\tau_{f \circ c}-f\left(\tau_c \right)={ P}f(M_c)P^*-f(PM_c P^*) \end{equation}
	and since $f$ is a polynomial, it suffices to show that $$({ P}M_c^jP^*)p-({ P}M_cP^*)^j p,$$ is a polynomial of degree less than or equal to $j-1$, for any polynomial $p$. It is clear that this holds for $j=1$. We assume that it is true for $j-1$ ($j \ge 2$), and prove it for $j$.
We rewrite
	\begin{align*}
	({ P}M_c^jP^*-({ P}M_cP^*)^j)p & =
    PM_c \left(M_c^{j-1} P^* - P^* (PM_c P^*)^{j-1}\right) p
    \\
    & =
    { P}M_c\left(M^{j-1}_c-{P^* P}M_c^{j-1}\right)p-{P}M_cq_{j-2},
	\end{align*}
	where on the last step we use the induction hypothesis to replace $(PM_c P^*)^{j-1} p$ with ${ P}M_c^{j-1}P^* p + q_{j-2}$, where $q_{j-2}$ is some polynomial of degree less than or equal to $j-2$. The first term ${ P}M_c\left(M^{j-1}_c-{P^*P}M_c^{j-1}\right)p$ is a constant (indeed, for any rational function $g$ we have that $P M_c(g-P^* Pg)$ is a constant) and ${P}M_c q_{j-2}$ has degree less than or equal to $j-1$, and thus we have proven the lemma.
\end{proof}

	 \begin{proof}[{Proof of Lemma \ref{lem:fJtoTfa}}]
	 	By Lemma \ref{LemmaFin} we see that the matrices $T_{f \circ c} $ and $f(T_c)$ only differ possibly in the first $\deg f-1$ columns.
Hence, as $n\to \infty$,
	 	\begin{equation}(T_{f \circ c})_{n+\ell, n+m}-f(T_c)_{n+\ell,n+m}\to 0,\end{equation}
	 	for any fixed $\ell,m$. Thus, the statement follows from Theorem \ref{lemmacumu} and~\eqref{eqnRightLim2}.
	 \end{proof}

\subsection{Computing the cumulants for the right-limit}
We now compute the limiting behavior of $C_m^{(n)}\left(T_{f \circ c} \right)$ as $n \to \infty$.  For the rest of the section let us put $$r = f \circ c$$ for some given polynomial $f$.

We start with the following lemma.

\begin{lemma}\label{lem:decompr} 
Split $r=r_++r_-$ where $r_+$ is the polynomial part of $r$, i.e. $r_+=Pr$. Then
	\begin{enumerate}
			\item $T_r=T_{r_+}+T_{r_-}$.
			\item $T_{r_+}$ is upper triangular and banded (more precisely, $(T_{r_+})_{j,j+k}=0$ for $k<0$ or $k > \deg r_+$),
			\item $T_{r_-}$ is strictly lower triangular,
			\item $[T_{r_-},T_{r_+}]Q_s=0$ 
for $s\geq \deg r_+-1$.
	\end{enumerate}
\end{lemma}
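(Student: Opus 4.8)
The plan is to dispatch the four assertions in order of increasing difficulty, working throughout with the two operator identities $\tau_{r_+}p = r_+ p$ (valid because $r_+$ is a polynomial, so $P$ acts trivially on the polynomial $r_+ p$) and $\tau_{r_-}p = P(r_- p)$, both holding for every polynomial $p$. The first item is then immediate: multiplication is linear in its multiplier, so $M_r = M_{r_+}+M_{r_-}$ and hence $\tau_r = \tau_{r_+}+\tau_{r_-}$; since the assignment of a matrix to an operator in~\eqref{eq:matrixTc} is linear, $T_r = T_{r_+}+T_{r_-}$.

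For the second item I would exploit the divisibility built into the path. Because $\vec k_{n+1}=\vec k_n+\vec e_{j_n}$, we have $\pi_{\vec k_{n+1}}=(z-b_{j_n})\pi_{\vec k_n}$, and therefore $\pi_{\vec k_{j+i}}/\pi_{\vec k_j}$ is a monic polynomial $\rho_{j,i}$ of degree $i$ for every $i\ge 0$. Fixing $j$, the family $\{\rho_{j,i}\}_{i=0}^{\deg r_+}$ is graded by degree, hence a linear basis of the polynomials of degree $\le \deg r_+$, so $r_+ = \sum_{i=0}^{\deg r_+} c_{j,i}\,\rho_{j,i}$ and consequently $\tau_{r_+}\pi_{\vec k_j}=r_+\pi_{\vec k_j}=\sum_{i=0}^{\deg r_+} c_{j,i}\,\pi_{\vec k_{j+i}}$. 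This expansion uses only indices between $j$ and $j+\deg r_+$, which is exactly the claim that $T_{r_+}$ is upper triangular and banded with bandwidth $\deg r_+$. For the third item, writing $r_-=N/D$ with $\deg N<\deg D$, the polynomial part of $r_- p$ is the quotient of $Np$ by $D$, of degree $\deg N+\deg p-\deg D\le \deg p-1$. Hence $\tau_{r_-}\pi_{\vec k_j}$ has degree at most $j-1$ and lies in the span of $\pi_{\vec k_0},\dots,\pi_{\vec k_{j-1}}$, i.e. $T_{r_-}$ is strictly lower triangular.

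The crux is the fourth item, where I would compute the \emph{operator} commutator directly. Using the two identities above,
$$[\tau_{r_-},\tau_{r_+}]p=\tau_{r_-}(r_+p)-\tau_{r_+}\bigl(P(r_-p)\bigr)=P(r_+r_-p)-r_+P(r_-p).$$
Decomposing $r_-p=P(r_-p)+(r_-p)_-$ into its polynomial and strictly proper parts and using that $r_+P(r_-p)$ is already a polynomial, the two copies of $r_+P(r_-p)$ cancel and one is left with
$$[\tau_{r_-},\tau_{r_+}]p=P\bigl(r_+\,(r_-p)_-\bigr).$$
Since $(r_-p)_-$ is strictly proper and $r_+$ has degree $\deg r_+$, the product is $\bigO(z^{\deg r_+-1})$ at infinity, so its polynomial part has degree at most $\deg r_+-1$, \emph{independently of} $p$. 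Translating back to matrices — recalling that the correspondence of~\eqref{eq:matrixTc} sends operator products to matrix products in the opposite order, so that the matrix of $[\tau_{r_-},\tau_{r_+}]$ equals $\pm[T_{r_-},T_{r_+}]$ — this shows that every row of $[T_{r_-},T_{r_+}]$ is supported in the finitely many columns of index below $\deg r_+$. Multiplying on the right by $Q_s$, which annihilates exactly those columns for $s$ large enough, yields $[T_{r_-},T_{r_+}]Q_s=0$ for $s\ge \deg r_+-1$.

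The main obstacle is precisely this last item: the multipliers $r_+$ and $r_-$ commute, so the entire commutator is produced by the projections $P,P^*$, and these must be tracked with care through every composition. One must also correctly record that the operator-to-matrix map in~\eqref{eq:matrixTc} is an anti-homomorphism, so that an operator commutator corresponds to the matrix commutator up to sign. The structural input that makes all of this work is the divisibility $\pi_{\vec k_j}\mid \pi_{\vec k_{j+i}}$ furnished by the up-right path; this is what converts the abstract Toeplitz-operator identities into the concrete triangular and banded statements about $T_{r_\pm}$, and it is also what guarantees that $\{\pi_{\vec k_n}\}_{n\ge 0}$ is a genuine graded basis, so I would take care to invoke it cleanly.
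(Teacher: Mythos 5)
Your proof is correct, and for items 1--3 it is exactly the paper's argument: linearity, the divisibility $\pi_{\vec k_{j+i}}=\rho_{j,i}\,\pi_{\vec k_j}$ along the up-right path (so that $r_+\pi_{\vec k_j}$ expands over $\pi_{\vec k_j},\dots,\pi_{\vec k_{j+\deg r_+}}$), and the degree drop of $P(r_-p)$. Where you genuinely differ is item 4. The paper establishes it through the contour-integral identity
\begin{equation}
\left(\tau_{r_-}\tau_{r_+}-\tau_{r_+}\tau_{r_-}\right)p(z)=\frac{1}{2\pi i}\oint_\gamma \frac{r_+(w)-r_+(z)}{w-z}\,r_-(w)\,p(w)\,dw,
\end{equation}
and observes that the difference quotient is a polynomial of degree $\deg r_+-1$ in $z$, so the commutator maps every polynomial into the span of $\pi_{\vec k_0},\dots,\pi_{\vec k_{\deg r_+-1}}$. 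Your algebraic identity $[\tau_{r_-},\tau_{r_+}]p=P\bigl(r_+(r_-p)_-\bigr)$, with the degree bound read off from decay at infinity, is an equivalent derivation of the same fact --- it is what the paper's integral becomes after splitting $r_-p$ into its polynomial and strictly proper parts. Your version is slightly more elementary (it uses only that $g-Pg$ vanishes at infinity, never the integral representation of $P$), while the paper's kernel formula exhibits the finite-rank, Hankel-type structure of the commutator at a glance; either is fine. Your explicit remark that the correspondence \eqref{eq:matrixTc} reverses products, so that the operator commutator corresponds to $\pm[T_{r_-},T_{r_+}]$, is a point the paper leaves implicit and is worth keeping. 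Finally, note a harmless off-by-one that you share with the paper: since the range of the commutator is spanned by $\pi_{\vec k_0},\dots,\pi_{\vec k_{\deg r_+-1}}$, the matrix $[T_{r_-},T_{r_+}]$ can have a nonzero column of index $\deg r_+-1$, so with the zero-based indexing of \eqref{eq:matrixTc} one really needs $s\ge\deg r_+$ (for instance, with $f(x)=x$ and $r=c$ one has $[\tau_{r_-},\tau_{r_+}]1=\sum_j a_j\neq 0$ in general, so $Q_s$ with $s=\deg r_+-1=0$ does not annihilate the commutator); this is immaterial downstream, where only the existence of some finite $s$ is ever used.
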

\begin{proof}
	1. This is trivial.
	
	2. By construction of the basis it follows that $r_+(z) \pi_{\vec k_{n}}$ can be expressed in term of $\pi_{\vec k_m}$ with only $n \leq m \leq n+ \deg r_+$. Therefore, $T_{r_+}$ is upper  triangular and  banded.
	
	3. The polynomial part of $r_-p$ for any polynomial $p$ is a polynomial of strictly lower degree.

	4.  Note that 	$$
	\left(\tau_{r_-} \tau_{r_+}-\tau_{r_+} \tau_{r_-}\right)p(z)= \frac{1}{2 \pi i } \oint_\gamma \frac{r_+(w)-r_+(z)}{w-z} r_-(w) p(w) dw,
$$
	where $\gamma$ is a simple counter-clockwise oriented contour that goes around all poles of $r_- p$.  Now, $ \frac{r_+(w)-r_+(z)}{w-z} $ is a polynomial of degree $\deg r_+-1$ in $z$. After going to the  basis $\{ \pi_{\vec k_n}\}_n$ we see therefore that only the first $\deg r_+-1$ columns of $[T_{r_-},T_{r_+}]$  contain non-zero entries. This proves the statement with $s \geq \deg r_+-1$.  \end{proof}

By Theorem  \ref{thm:decompB} and  Lemma \ref{lem:decompr} we readily find the following corollary.

\begin{corollary}\label{cor:cumutozero}
	 Let $f$ be a polynomial. Then
	$$\lim_{n \to \infty} C_m^{(n)} (T_{r})=0, $$
	for $m\geq 3$.
	\end{corollary}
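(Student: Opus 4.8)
The plan is to verify that the matrix $T_r$ satisfies precisely the hypotheses of Theorem~\ref{thm:decompB}, after which the conclusion is immediate. The decomposition to use is the one supplied by Lemma~\ref{lem:decompr}: write $B = T_r = T_{r_-} + T_{r_+}$ and set $B_- = T_{r_-}$, $B_+ = T_{r_+}$. First I would check the three structural conditions of Theorem~\ref{thm:decompB} against the four conclusions of Lemma~\ref{lem:decompr}. Condition (1) of the theorem (strict lower triangularity of $B_-$) is exactly part~(3) of the lemma; condition (2) (upper triangularity with finite bandwidth) is part~(2), with bandwidth $b = \deg r_+$; and condition~(3) (the commutator annihilates $Q_s$ for some $s$) is part~(4), which gives this for $s \geq \deg r_+ - 1$. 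Note that $r_+ = P r = P(f\circ c)$ is a genuine polynomial, so $\deg r_+$ is finite and all the bandwidth and annihilation statements are meaningful.

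Once the hypotheses are matched, the statement follows directly: Theorem~\ref{thm:decompB} asserts that $C_m^{(n)}(B) \to 0$ as $n \to \infty$ for every $m \geq 3$, which upon substituting $B = T_r$ is exactly the claimed limit. Since Corollary~\ref{cor:cumutozero} only claims the conclusion for $m \geq 3$, there is no mismatch with the range in Theorem~\ref{thm:decompB}, and no separate treatment of $m = 2$ is required here.

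The proof is therefore essentially a bookkeeping step: I would state the decomposition, cite Lemma~\ref{lem:decompr} to confirm each of the three conditions (1)--(3) of Theorem~\ref{thm:decompB} holds, and then invoke Theorem~\ref{thm:decompB} to conclude. The only point demanding any care is ensuring that the \emph{same} value of $s$ works for condition~(3); since Lemma~\ref{lem:decompr}(4) produces a single explicit threshold $s \geq \deg r_+ - 1$ independent of $n$, this is unproblematic. I would write the proof as roughly a two-sentence verification. A representative rendering:

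\begin{proof}
Write $B=T_r=T_{r_-}+T_{r_+}$ with $B_-=T_{r_-}$ and $B_+=T_{r_+}$ as in Lemma~\ref{lem:decompr}. Parts~(2)--(4) of that lemma show that $B_-$ is strictly lower triangular, that $B_+$ is upper triangular and banded, and that $[B_-,B_+]Q_s=0$ for $s\geq \deg r_+-1$; these are exactly the three hypotheses of Theorem~\ref{thm:decompB}. Hence $C_m^{(n)}(T_r)\to 0$ as $n\to\infty$ for all $m\geq 3$.
\end{proof}

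The main obstacle, such as it is, lies not in this corollary but entirely upstream: the genuine work has already been done in establishing Lemma~\ref{lem:decompr}(4) (the contour-integral computation showing the commutator has only $\deg r_+ - 1$ nonzero columns) and in the Baker--Campbell--Hausdorff argument inside Theorem~\ref{thm:decompB}. At the level of this corollary there is no real difficulty, and I would resist any temptation to reprove those inputs; the corollary's role is purely to package them together.
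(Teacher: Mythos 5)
Your proposal is correct and coincides with the paper's own proof: the paper derives Corollary~\ref{cor:cumutozero} exactly by combining Lemma~\ref{lem:decompr} (which supplies the decomposition $T_r = T_{r_-}+T_{r_+}$ with the three required structural properties) with Theorem~\ref{thm:decompB}. Your verification of the hypotheses, including the explicit threshold $s \geq \deg r_+ - 1$, matches the paper's reasoning, so nothing further is needed.
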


It remains to compute the limiting behavior of $C_2^{(n)}(T_r)$. For this, we will change basis and work with $C_2^{(n)}(\mathcal{T}_r)$ instead. This change of basis is surprisingly delicate.
\begin{lemma} \label{lem:variance0} Let $f$ be a polynomial. Then
	$$C_2^{(n)}(T_{r})=C_2^{(n)}(\mathcal{T}_{r}),$$
	for $n$ large enough.
	\end{lemma}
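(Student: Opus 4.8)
The plan is to reduce the claimed identity to a single scalar identity about the \emph{corner blocks} of $\mathcal T_r$, exploiting the conjugation $T_r=S\mathcal T_r S^{-1}$ from Lemma~\ref{S-1TS}. First I would put the second cumulant in a form that only sees the matrix near the cut at level $n$. Directly from the definition of $C^{(n)}_m$ one has, for any $B\in\mathcal B$,
\[
C_2^{(n)}(B)=\Tr P_nB^2P_n-\Tr(P_nBP_n)^2=\Tr(P_nBQ_nBP_n)=-\tfrac12\Tr\big([P_n,B]^2\big),
\]
the traces being finite because the upper band of $B$ is finite, so $P_nBQ_n$ has finite support. Writing $B=T_r=S\mathcal T_rS^{-1}$ and using that $S,S^{-1}$ are lower triangular with unit diagonal (hence they map $\mathrm{Ran}\,Q_n$ into itself, i.e. $P_nSQ_n=P_nS^{-1}Q_n=0$), I would pull the conjugation through the commutator, $[P_n,T_r]=S[\tilde P_n,\mathcal T_r]S^{-1}$ with $\tilde P_n:=S^{-1}P_nS$. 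By cyclicity this gives $C_2^{(n)}(T_r)=-\tfrac12\Tr([\tilde P_n,\mathcal T_r]^2)$, so the lemma becomes the assertion that replacing the orthogonal projection $P_n$ by the skew projection $\tilde P_n$ leaves the functional unchanged.

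Next I would analyse $\tilde P_n$. Since $S^{-1}$ preserves $\mathrm{Ran}\,Q_n$, the idempotent $\tilde P_n$ has exactly the same kernel $\mathrm{Ran}\,Q_n$ as $P_n$; therefore $N:=\tilde P_n-P_n$ is supported in the strictly lower-left corner, $N=Q_nNP_n$, with block $N=-S_{22}^{-1}S_{21}=:-R$ in the $P_n/Q_n$ decomposition. The decisive simplification is that everything localises to a finite corner: because $r_+=Pr$ has degree $p=\deg f$, the Toeplitz matrix $\mathcal T_r$ has upper bandwidth $p$, so its corner block $A_{12}:=P_n\mathcal T_rQ_n$ has rank at most $p$ and is supported on indices $[n-p,n)\times[n,n+p)$. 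Expanding
\[
\Tr([\tilde P_n,\mathcal T_r]^2)=\Tr([P_n,\mathcal T_r]^2)+2\Tr([P_n,\mathcal T_r][N,\mathcal T_r])+\Tr([N,\mathcal T_r]^2)
\]
and computing the last two terms blockwise, the difference $C_2^{(n)}(T_r)-C_2^{(n)}(\mathcal T_r)$ collapses to the single scalar identity
\[
\Tr(A_{11}A_{12}R)-\Tr(A_{12}A_{22}R)-\Tr(A_{12}RA_{12}R)=0,
\]
where $A_{11},A_{12},A_{22}$ are the $P_n/Q_n$ blocks of $\mathcal T_r$. All three traces see only the $p\times p$ corner of $R$, which is computable from the recurrence $\pi_{\vec k_{j+1}}=(z-b_{j_j})\pi_{\vec k_j}$ satisfied by the basis (here $j_j$ is the direction of the $j$th step; this recurrence also conjugates the shift to a bidiagonal matrix, which is what makes the corner of $R$ explicit). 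I would then verify the identity by the resulting finite computation; equivalently, I would check directly that $C_2^{(n)}(T_r)$ evaluates to $\sum_{\ell\ge1}\ell\,r_\ell r_{-\ell}$, which is exactly the value of $C_2^{(n)}(\mathcal T_r)=\sum_{\ell=1}^{p}\ell\,r_\ell r_{-\ell}$, a quantity that is manifestly independent of $n$ for $n\ge p$.

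The main obstacle is precisely the point the text flags as ``surprisingly delicate'': $C_2^{(n)}$ is \emph{not} invariant under the similarity $B\mapsto SBS^{-1}$, because the projections $P_n$ break similarity invariance. One therefore cannot argue that $T_r$ and $\mathcal T_r$, being two matrix representations of the same operator $\tau_r$, automatically share the same variance functional; the entire content is to control the corner correction $N=\tilde P_n-P_n$ and show its contribution cancels. This cancellation is a genuine consequence of the translation invariance (Toeplitz structure) of $\mathcal T_r$: it is what forces $C_2^{(n)}(\mathcal T_r)$ to be exactly $n$-independent and the corner terms coming from $T_r$ to telescope to zero for large $n$, whereas for a general non-Toeplitz matrix of the same bandwidth the analogous difference would not vanish. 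Keeping careful track of the finite support of $P_n\mathcal T_rQ_n$ and of the explicit corner entries of $R$ — including their signs, which is where an easy error creeps in — is the delicate part of the argument.
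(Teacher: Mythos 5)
Your reduction is correct as far as it goes, but it stops exactly where the lemma begins. Setting $N=\tilde P_n-P_n=-R$ and expanding $-\tfrac12\Tr\big([\tilde P_n,\mathcal T_r]^2\big)$ blockwise does yield (granting the cyclicity steps for these non-finitely-supported matrices, which themselves need justification but can be pushed through using the triangular structure of $S$) precisely
\[
C_2^{(n)}(T_r)-C_2^{(n)}(\mathcal T_r)=\Tr(A_{11}A_{12}R)-\Tr(A_{12}A_{22}R)-\Tr(A_{12}RA_{12}R),
\]
so the lemma is \emph{equivalent} to the vanishing of your scalar identity. But you never prove that it vanishes: ``I would then verify the identity by the resulting finite computation'' is the entire content of the lemma, and your fallback (compute $C_2^{(n)}(T_r)$ directly and compare with $\sum_\ell \ell r_\ell r_{-\ell}$) is a restatement of the goal, since the only side that admits an easy computation is $\mathcal T_r$. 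Moreover, the computation is substantially harder than you suggest, because the corner entries of $R$ do not stabilize as $n\to\infty$: already in the simplest case $m=1$, $\pi_j(z)=(z-b)^j$, $r=z+1/z$, one finds $R(n,n-1)=-nb$, $R(n,n-2)=\binom{n}{2}b^2$, $R(n+1,n-1)=-\tfrac{n(n+1)}{2}b^2$, and the identity $R(n,n-2)-R(n+1,n-1)-R(n,n-1)^2=0$ holds only through an exact cancellation of terms of size $n^2$. For general $m$ the corner of $R$ depends on the path $\{j_i\}$ and on all the $b_\ell$'s, so the ``finite computation'' is really a family of polynomial identities in $n$, the path and the parameters, and nothing in your argument explains why it should close. (Also, the traces see corners of size roughly $2p$, not $p\times p$, since $A_{11}$ and $A_{22}$ spread indices by another $p$; a minor point.)

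What is missing is a structural mechanism forcing the cancellation, and this is what the paper supplies. The paper first splits $r=r_++r_-$ with $r_+=Pr$, uses the triangular structure to write $C_2^{(n)}(T_r)=\Tr P_nT_{r_+}Q_nT_{r_-}P_n$, conjugates each factor separately via Lemma~\ref{S-1TS}, and then, through insertions of $I=P_n+Q_n$ and cyclicity applied only to finite-rank products, shows that the whole correction collapses to the single term $\Tr P_n[\mathcal T_{r_+},\mathcal T_{r_-}]\,Q_nS^{-1}P_nSP_n$ --- i.e.\ \emph{linear} in your corner matrix $N=Q_nS^{-1}P_nSP_n$, the quadratic term having been absorbed along the way. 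This term then vanishes for $n$ large because $[\mathcal T_{r_+},\mathcal T_{r_-}]Q_n=0$ for $n\ge \deg r_+ -1$ (part 4 of Lemma~\ref{lem:decompr}: the commutator of the two Toeplitz pieces of a rational symbol has only finitely many non-zero columns, by the contour-integral representation). That finite-rank commutator fact --- not Toeplitz ``translation invariance'' in the abstract, which is the mechanism you gesture at --- is the engine of the cancellation, and it is absent from your proposal; without it, or an explicit inductive computation of the corner of $R$ valid for every $m$, path and parameter choice, your proof is not complete.
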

\begin{proof} We will use the 
decomposition $r=r_++r_-$ with $r_+=Pr$.
	We wish to rely on the connection between $T_r$ and $\mathcal T_r$ from Lemma \ref{S-1TS}.	
First observe that

$$  C_2^{(n)}(T_r)=- \Tr P_n T_r P_n T_r P_n+\Tr P_n (T_r^2) P_n= \Tr P_n T_r Q_n T_r P_n.$$
Because of the triangular structure,
\begin{equation}
P_nT_{r_-}Q_n=0, \qquad Q_n T_{r_+}P_n=0,
\end{equation}
and thus we can write
$$ C_2^{(n)}(T_r)= \Tr P_n T_{r_+} Q_n T_{r_-} P_n.$$
Now, recalling Lemma \ref{S-1TS}, we change basis from $\{\pi_{\vec k_n}\}_n$ to the canonical basis $\{z^n\}_n$. This change of basis shows that  $T_{r_\pm}$ and $\mathcal T_{r_{\pm}}$ are related by conjugation with  a lower triangular matrix $S$,
$$T_{r_{\pm}}=S \mathcal T_{r_{\pm}} S^{-1}.$$
Thus, we can write
\begin{equation} \label{eq:Variancestep1}
 C_2^{(n)}(T_r)= \Tr P_n S \mathcal T_{r_+} S^{-1} Q_n S \mathcal T_{r_-}S^{-1} P_n.
\end{equation}
It remains to argue that we can drop the $S^{\pm 1}$ everywhere on the right-hand side. This is is in fact not obvious since $P_n$ and $S^{\pm 1}$ do not commute.

By the lower-triangular structure of $S$, we have
$$ P_nS^{\pm 1} = P_n S^{\pm1} P_n \quad \text{and} \quad  S^{\pm 1}Q_n= Q_n S^{\pm 1} Q_n.$$
This implies that
\begin{equation} \label{eq:PSPSinP}
	P_n S P_n S^{-1}P_n=P_n \quad  \text {and} \quad Q_n S Q_n S^{-1} Q_n= Q_n.
\end{equation}
This allows us to rewrite \eqref{eq:Variancestep1} to
$$  C_2^{(n)}(T_r)= \Tr P_n SP_n \mathcal T_{r_+} Q_nS^{-1} Q_n S  \mathcal T_{r_-}  S^{-1} P_n.$$
 Now insert $I=P_n+Q_n$ after the $\mathcal T_{r_-}$ at the right-hand side to obtain
\begin{multline}
C_2^{(n)}(T_r)= \Tr P_n SP_n \mathcal T_{r_+} Q_nS^{-1} Q_n S  \mathcal T_{r_-} P_n S^{-1} P_n\\
+\Tr P_n SP_n \mathcal T_{r_+} Q_nS^{-1} Q_n S  \mathcal T_{r_-} Q_n S^{-1} P_n.
\end{multline}
Since $\Tr A B= \Tr B A$ if $A,B$ are of finite rank and by \eqref{eq:PSPSinP}, we can simplify the first term at the right-hand side to
\begin{multline}
C_2^{(n)}(T_r)=  \Tr P_n \mathcal T_{r_+} Q_nS^{-1} Q_n S  \mathcal T_{r_-} P_n \\
+\Tr P_n SP_n \mathcal T_{r_+} Q_nS^{-1} Q_n S  \mathcal T_{r_-} Q_n S^{-1} P_n.
\end{multline}
By inserting $I=P_n+Q_n$ before $\mathcal T_{r_-}$ in the first term at the right-hand side we find
\begin{multline}
C_2^{(n)}(T_r)=  \Tr P_n \mathcal T_{r_+} Q_nS^{-1} Q_n S  Q_n\mathcal T_{r_-} P_n \\
+\Tr P_n \mathcal T_{r_+} Q_nS^{-1} Q_n S  P_n\mathcal T_{r_-} P_n \\
+\Tr P_n SP_n \mathcal T_{r_+} Q_nS^{-1} Q_n S  \mathcal T_{r_-} Q_n S^{-1} P_n.
\end{multline}
In the first term at the right-hand side we use \eqref{eq:PSPSinP}. For the third term, we note that since $\mathcal T_{r_-}$ is lower triangular  we have $ \mathcal T_{r_-}Q_n= Q_n \mathcal T_{r_-}Q_n$ and combining this with \eqref{eq:PSPSinP} gives
\begin{multline}
C_2^{(n)}(T_r)=  \Tr P_n \mathcal T_{r_+}   Q_n\mathcal T_{r_-} P_n \\
+\Tr P_n \mathcal T_{r_+} Q_nS^{-1} Q_n S  P_n\mathcal T_{r_-} P_n \\
+\Tr P_n SP_n \mathcal T_{r_+} Q_n \mathcal T_{r_-} Q_n S^{-1} P_n.
\end{multline}
Using the cyclicity of the trace in the second and third term we find
\begin{multline}
C_2^{(n)}(T_r)=  \Tr P_n \mathcal T_{r_+}   Q_n\mathcal T_{r_-} P_n \\
+\Tr P_n\mathcal T_{r_-}P_n \mathcal T_{r_+} Q_nS^{-1} Q_n S  P_n \\
+\Tr P_n \mathcal T_{r_+} Q_n \mathcal T_{r_-} Q_n S^{-1} P_nSP_n.
\end{multline}
By the triangular structure we have $P_n \mathcal T_{r_-}P_n=P_n\mathcal T_{r_-}$ and $Q_n\mathcal T_{r_-}Q_n=\mathcal T_{r_-}Q_n$, so we can drop the $Q_n$ and $P_n$ between the matrices $\mathcal T_{r_\pm}$ and get
\begin{multline}
C_2^{(n)}(T_r)=  \Tr P_n \mathcal T_{r_+}   Q_n\mathcal T_{r_-} P_n \\
+\Tr P_n\mathcal T_{r_-} \mathcal T_{r_+} Q_nS^{-1} Q_n S  P_n \\
+\Tr P_n \mathcal T_{r_+}  \mathcal T_{r_-} Q_n S^{-1} P_nSP_n.
\end{multline}
Since $Q_nP_n =O$ we have $Q_n S^{-1} Q_n S P_n=-Q_nS^{-1}  P_n S P_n$ and thus
$$
C_2^{(n)}(T_r)=  \Tr P_n \mathcal T_{r_+}   Q_n\mathcal T_{r_-} P_n
+\Tr P_n [\mathcal T_{r_+},  \mathcal T_{r_-}] Q_n S^{-1} P_nSP_n.
$$
Now, by the same argumentation as in Lemma \ref{lem:decompr}, we have $  [\mathcal T_{r_+},\mathcal T_{r_-}]  Q_n=O$ for $n$ large enough, and thus,
\begin{equation}\label{eq:variance}
C^{(n)}_2(T_r)= \Tr P_n\mathcal T_{r_+}Q_n  \mathcal T_{r_-} P_n= C_2^{(n)}(\mathcal T_r)
\end{equation}
which proves the statement.
\end{proof}
The benefit of working with $C_2^{(n)}( \mathcal T_r)$ over $C_2^{(n)}(T_r)$ is that it is much easier to compute.
\begin{lemma}\label{lem:variance} Let $f$ be a polynomial. 
Then
	$$\lim_{n\to \infty}C_2^{(n)}(\mathcal T_r)= \sum_{j=1}^\infty j r_j r_{-j}.$$
\end{lemma}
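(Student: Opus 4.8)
The plan is to reduce the lemma to an explicit entrywise trace computation. The proof of Lemma~\ref{lem:variance0} already isolated exactly the object we need: writing $r=r_++r_-$ with $r_+=Pr$, equation~\eqref{eq:variance} gives
\begin{equation}
C_2^{(n)}(\mathcal T_r)=\Tr P_n\mathcal T_{r_+}Q_n\mathcal T_{r_-}P_n .
\end{equation}
Equivalently, starting from the general identity $C_2^{(n)}(B)=\Tr P_nBQ_nBP_n$ applied to $B=\mathcal T_r$ and using $Q_n=I-P_n$ together with the triangular vanishing $P_n\mathcal T_{r_-}Q_n=0$ and $Q_n\mathcal T_{r_+}P_n=0$, the same surviving term appears. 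So the whole content of the lemma is the evaluation of this finite-rank trace, after which one takes $n\to\infty$.

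First I would expand the trace in the canonical basis. Recalling $(\mathcal T_r)_{ik}=r_{k-i}$, and that $P_n$ retains the indices $0,\dots,n-1$ while $Q_n$ retains $k\ge n$, this gives
\begin{equation}
C_2^{(n)}(\mathcal T_r)=\sum_{i=0}^{n-1}\sum_{k\ge n}(\mathcal T_r)_{ik}(\mathcal T_r)_{ki}=\sum_{i=0}^{n-1}\sum_{k\ge n}r_{k-i}\,r_{i-k}.
\end{equation}
Then I would change the summation variable to $\ell=k-i\ge 1$, regrouping by the value of $\ell$. For fixed $\ell$, an admissible index $i$ must satisfy $0\le i\le n-1$ and $i=k-\ell\ge n-\ell$, i.e.\ $\max(0,n-\ell)\le i\le n-1$, which leaves exactly $\min(\ell,n)$ choices. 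Hence
\begin{equation}
C_2^{(n)}(\mathcal T_r)=\sum_{\ell\ge 1}\min(\ell,n)\,r_\ell\,r_{-\ell}.
\end{equation}

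It remains to pass to the limit, and this is the only step that uses structure rather than pure bookkeeping. Since $r=f\circ c$ with $c(z)=z+\sum_j a_j/(z-b_j)$ behaving like $z$ at infinity, the Laurent expansion of $r$ in the annulus $\{|z|>\max_j|b_j|\}$ has the form $r(z)=\sum_{\ell\le\deg f}r_\ell z^\ell$; in particular $r_\ell=0$ for $\ell>\deg f$. This truncation is precisely the reflection of the fact that $r_+=Pr$ is banded of bandwidth $\deg f$, even though $r_-$ is not, and it guarantees that the series above is actually a finite sum, so there are no convergence subtleties. Consequently, as soon as $n\ge\deg f$ the factor $\min(\ell,n)$ equals $\ell$ for every $\ell$ with $r_\ell\ne 0$, so $C_2^{(n)}(\mathcal T_r)=\sum_{\ell\ge 1}\ell\,r_\ell r_{-\ell}$ for all large $n$, and the stated limit follows. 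The main thing to get right, as opposed to routine index-chasing, is this finiteness coming from the one-sided bandedness of $r_+$; I expect no genuine analytic obstacle beyond correctly tracking the multiplicity $\min(\ell,n)$.
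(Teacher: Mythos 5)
Your proof is correct and follows essentially the same route as the paper: both start from \eqref{eq:variance}, expand $\Tr P_n\mathcal T_{r_+}Q_n\mathcal T_{r_-}P_n$ entrywise to obtain $\sum_{\ell\ge 1}\min(\ell,n)\,r_\ell r_{-\ell}$, and let $n\to\infty$. Your only addition is the explicit justification of the limit via $r_\ell=0$ for $\ell>\deg f$, a point the paper leaves implicit.
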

\begin{proof}
	This follows directly by  rewriting \eqref{eq:variance}
	$$\Tr P_n\mathcal T_{r_+}Q_n  \mathcal T_{r_-} P_n= \sum_{k=1}^n \sum_{\ell=k}^\infty r_{\ell} r_{-\ell}= \sum_{j=1}^{\infty} \min(j,n) r_jr_{-j},$$
 and then taking the limit	 as $n \to \infty$.
\end{proof}

	\subsection{Proof of Theorem  \ref{thm:varying}}
	\begin{proof}
		To prove that $X_n(f)-\mathbb E X_n(f)$  converge to a Gaussian in distribution, it is sufficient (1) to show that $C_m(X_n(f)) \to 0$ as $n \to \infty$ for $m \geq 3$ and (2) to compute $C_2(X_n(f))$.  Corollary \ref{cor:cumu} shows that these cumulants are given by $C_m^{(n)}(f(J))$. By Lemma \ref{lem:fJtoTfa} it suffices to consider the limiting behavior of $C_m^{(n)}(T_{f \circ c}))$ as $n\to \infty$. Corollary \ref{cor:cumutozero} tells us that the higher cumulants indeed tend to zero as $n \to \infty$. The second cumulant is computed using Lemmas \ref{lem:variance0} and \ref{lem:variance}.
 \end{proof}

	\section{Applications} \label{sec:examples}
		In this section we will illustrate the main results with some applications. We will prove CLT's for the MOPE's related to the multiple Hermite, Laguerre, Charlier, Krawtchouk, and Meixner polynomials and also discuss them in the context of random matrix theory and integrable probability.
		\subsection{GUE with external source and Multiple Hermite polynomials}
		Let us give more details here on the example from the Introduction. So consider \eqref{eq:externalsource} where $H$ is a diagonal matrix with $m$ distinct values $h_1, \ldots, h_m$ on the diagonal with multiplicities $k_1, \ldots, k_m$. Then the eigenvalues of a matrix $M$ chosen randomly from \eqref{eq:externalsource} form a MOPE with weights $$w_\ell(x)= e^{- n\big(\tfrac12 x^2-h_\ell x\big)}, \qquad \ell=1,\ldots,m.$$ The recurrence coefficients of the multiple Hermite polynomials of type II, denoted by $p_{\vec k}$ for $\vec k=( k_1,\ldots,k_m)$ are then given by \eqref{eq:multiplehermiterec}. We readily verify that the conditions of Theorem \ref{thm:varying} are satisfied which gives us the following result.
\begin{corollary}
    Let $\{\vec k_n\}_{n=0}^\infty$ be a path satisfying~\eqref{eq:paths}. Let $\{x_j\}_{j=1}^n$ be the eigenvalues of a random matrix from GUE with external source as defined in Section~\ref{ss:MOPE}, where the  eigenvalues $h_1, \dots, h_m$ of $H$ have multiplicities  $\vec k_n$ ($h_1,\dots,h_m$ are fixed and independent of $n$ while the sum of their multiplicities varies and is equal to $n$).  Then for any polynomial $f$ with real coefficients, we have
	$$\sum_{j=1}^n f(x_j)- \mathbb E\left[ \sum_{j=1}^n f(x_j) \right] \to N\left(0,\sum_{\ell=1}^\infty \ell f_\ell f_{-\ell}\right),$$
	in distribution as $n\to \infty$, where
	$$f_\ell= \frac{1}{2 \pi i} \oint_\gamma f\left(z+ \sum_{j=1}^m\frac{\nu_j}{z-h_j}  \right) \frac{dz}{z^{\ell +1}}$$
	and $\gamma$ is a contour around the poles $h_j$  with  counter-clockwise orientation, where $\nu_j$ is the limit in \eqref{eq:paths}.
\end{corollary}
\begin{proof}
We have nearest neighbour recurrence relations~\eqref{eq:multiplehermiterec} which has the form of~\eqref{eq:nearest_neighbor} with $a^{(n)}_{\vec k,j} = \frac{k_j}{n}$ and $b^{(n)}_{\vec k,j} = h_j$. We readily verify condition \eqref{eq:multipleNevaivarying}. Namely, for any path $\vec k_n$ satisfying ~\eqref{eq:paths} and any fixed $s\in \mathbb Z$, we have  $a^{(n)}_{\vec k_{n+s},j} \to \nu_j$ and $b^{(n)}_{\vec k_{n+s},j} \to h_j$ as $n  \to \infty$. Then Theorem~\ref{thm:varying} proves our statement.
\end{proof}

To the best of our knowledge, this is the first CLT for this model.  Although one may argue that the point process is determinantal and that there are explicit double integral formulas for the kernel \cite{BK}, which means that proving a CLT using steepest descent techniques on these integrals should be possible, we emphasize that the many technical details in that approach increase with $m$. Our approach makes a very direct verification  possible, without all the cumbersome technical details.
		
		We end this example with a comment on the fact that the results of \cite{BD} are not sufficient for proving the above CLT. As mentioned before, the main assumption for the CLT in \cite{BD} was that the right limit of the  recurrence matrix $J$ in \eqref{eq:generalJacobi} is constant along the  diagonals. We claim that this is only possible for the multiple Hermite polynomials in case $m=1$.  This is most easily understood by an example: Consider the external source model  with $m=2$,  the values $h_1\neq h_2$ on the diagonal  and multiplicites $(n/2,n/2)$ with $n$ even. Then take the family of polynomials $\{p_{\vec k_j}\}_{j=0}^n$ where $\vec k_j=( \lfloor (j+1)/2\rfloor,\lfloor j/2 \rfloor)$ and $\lfloor x\rfloor$ stands for the largest integer less than or equal $x$.  From the recurrences \eqref{eq:multiplehermiterec} and \eqref{eq:consistency}, we then see that
		 $$x p_{\vec k_j}(x)= \begin{cases}
		p_{\vec k_{j+1}}(x)+  h_1 p_{\vec k_j}(x)+ \frac{ k_{1,j}+k_{2,j}}{n}p_{\vec k_{j-1} }(x)-\frac{ k_{1,j}(h_2-h_1)}{n} p_{\vec k_{j-2} }(x), & j \textrm{ even}\\
			p_{\vec k_{j+1}}(x)+  h_2 p_{\vec k_j}(x)+\frac{ k_{1,j}+k_{2,j}}{n}p_{\vec k_{j-1} }(x)+ \frac{ k_{2,j}(h_2-h_1)}{n} p_{\vec k_{j-2} }(x), & j \textrm{ odd}.
		 \end{cases}$$
		Therefore there is a 2-periodic structure along the diagonals in the recurrence matrix $J$, and every right limit  will have this 2-periodicity.  In fact, the right limit is  a particular block Toeplitz matrix. The proof of the CLT in \cite{BD} only apply to cases where the right limit is a scalar Toeplitz matrix and the extension to block Toeplitz is in general false. For instance, in the multi-cut case for unitary ensembles the right-limit can be a block Toeplitz matrix, but we know that there is no CLT  in the multi-cut case.  See, for example, \cite{BD} for a discussion. The point is that the block Toeplitz matrix in this example with multiple Hermite polynomials is of a very special type.
		
		\subsection{Wishart ensembles and Multiple Laguerre polynomials}
In the next example, we consider the measure
	\begin{equation}
			(\det M)^{\alpha} e^{-n \Tr M \Sigma} \ d M,
	\end{equation}
	on the space of positive definite matrices, where $\alpha>0$ and $\Sigma$ is a diagonal matrix with strictly positive entries. We will study the  case where $\Sigma$ has  precisely $m$ different values $\sigma_1, \ldots, \sigma_m$ on the diagonal with multiplicities $k_1, \ldots, k_m$. It is known that the eigenvalues of $M$ form a multiple orthogonal polynomial ensemble, with  weights now given by
$$
	w_j(x)= x^\alpha e^{-n \sigma_j x},\qquad j=1,\ldots, m.	
$$
The multiple orthogonal polynomials are called multiple Laguerre polynomials \emph{of  the second kind} \cite{V3} and are denoted by $L^{\alpha,\vec \sigma}_{\vec k}$ where $\vec \sigma=(\sigma_1, \ldots, \sigma_m)$. The nearest neighbor recurrences now are
given by
$$
	x L^{\alpha, \vec \sigma}_{\vec k}(x)= L^{\alpha, \vec \sigma}_{\vec k+ \vec e_\ell}(x)+ b_{\vec k, \ell} L^{\alpha, \vec \sigma}_{\vec k}(x)+ \sum_{j=1}^m a_{\vec k,j}L^{\alpha, \vec \sigma}_{\vec k- \vec e_j}(x)
$$
and \cite[§ 3.6.2]{V3}
$$
a_{\vec k,j}= \frac{ k_j(|\vec k|+\alpha)}{n^2 \sigma_j^2}, \qquad   	b_{\vec k, j}= \frac{|\vec k|+ \alpha+1}{n \sigma_j}+ \sum_{r=1}^m \frac{k_r}{n\sigma_r}.
$$
We see directly that also here the conditions of Theorem \ref{thm:varying} are satisfied, so we obtain the CLT of Theorem~\ref{thm:varying} with $a_j = \frac{\nu_j}{\sigma_j^2}$ and $b_j = \frac{1}{\sigma_j} + \sum_{r=1}^n \frac{\nu_r}{\sigma_r}$. Note that we can even let $\alpha$ depend linearly on $n$ which will change the parameters in the CLT in an obvious way. To the best of our knowledge, the CLT for Wishart ensembles has not appeared in the literature before.

\subsection{Discrete multiple orthogonal polynomials}

The examples above are well-known in the literature. It is lesser known that  discrete multiple orthogonal polynomials also appear in integrable probability. We will discuss three families of examples related to multiple Charlier, multiple Krawtchouk, and multiple Meixner polynomials, based on Markov processes for non-colliding particles. It is interesting to note that these Markov processes are special cases of the more general Schur process and  the Multiple Orthogonal Polynomial Ensembles are particular specializations of the Schur measure. This connection is not needed to understand the examples, but we will include a detailed explanation of these claims in the last paragraph for completeness.  We refer  to \cite{BG2,JL} for excellent lecture notes containing more background on the general constructions of this paragraph, including a more detailed discussion about Schur processes and how they appear in integrable probability.

\subsubsection{Markov processes on Weyl chambers}

Let $P_t(x-\xi)$ be the transition kernel (for the probability to jump from $\xi$ to $x$ after time $t$) for a single particle Markov process (time may be discrete or continuous) on $\mathbb Z$. We will mainly be interested in the cases of the Poisson process
\begin{equation}\label{eq:Poisson}
	P_t(x)= \begin{cases}e^{- t } \frac{t^x}{x!}, &  x = 0,1,2,3, \ldots,\\
	0,  & x <0,
	\end{cases} \quad  t >0,
\end{equation}
the simple random walk with binomial transition function
\begin{equation}\label{eq:simpleRandomWalk}
	P_t(x)= \begin{cases}
p^x(1-p)^{t-x} \begin{pmatrix}  t \\ x \end{pmatrix}, & x = 0,1, \ldots, t,\\
	0, & x<0 \textrm{ or } x>t,
	\end{cases}   \quad  t=1,2, \ldots,
\end{equation}
and the random walk where each jump is geometrically distributed with parameter $a\in (0,1)$ (and the parameter is constant in time), having the negative binomial distribution as transition function
\begin{equation}\label{eq:geometricRandomWalk}
P_t(x)= \begin{cases}
(1-a)^t \begin{pmatrix}
t-1+x\\
t-1
\end{pmatrix}a^x, & x = 0,1, \ldots, \\
0, & x<0,
\end{cases}   \quad  t=1,2, \ldots.
\end{equation}
There is a standard  construction for defining non-colliding processes with $n$ particles, starting from these Markov processes, using Doob's $h$-transform \cite{konig}. The key to this construction is the fact that $k \mapsto \alpha^k$ is a harmonic function for the transiton kernel $P_t(x-\xi)$. That is,
$$\sum_x P_t(x-\xi) \alpha^x= c_{t,\alpha} \alpha^\xi,$$
for some constant $c_{t,\alpha}$. On the Weyl Chamber
$$
	W_n= \{x_1 < \ldots < x_n \mid x_j \in \mathbb Z\},
$$
we can then, using the Cauchy--Binet identity, define a Markov process by taking the transition function
\begin{equation} \label{jpdfexample3}
	\frac{1}{n!\prod_i c_{t,\alpha_i}} \det \left( P_t(x_i-\xi_j)\right)_{i,j=1}^n\frac{ \det\left( \alpha_j^{x_i}\right)_{i,j=1}^n }{\det\left( \alpha_j^{\xi_i}\right)_{i,j=1}^n}
\end{equation}
 For now the coefficients $\alpha_j$ are arbitrary, but all with distinct values, in $(0,1]$. Since the states space is $n$-dimensional and ordered, we can think of the full process as a collection of $n$ processes. Each of these processes is driven by $P_t$ but they can never leave the Weyl chamber. We say that the processes are \emph{non-colliding}.
\subsubsection{Choosing special parameters}
We will start Markov process with transition functions \eqref{jpdfexample3} from consecutive integers $\xi_j=j-1$ for $j=1, \ldots,n$. This special initial condition is important for the connection to multiple orthogonal polynomials. Indeed, with this choice the determinant $\det P_t(x_i-\xi_j)$ reduces to the following product  containing the Vandermonde determinant

  $$\det \left(P_t(x_i-j+1)\right)_{i,j=1}^n =  \det(x_i^{j-1})_{i,j=1}^n \prod_{i=1}^n \mu_t(x_j).$$
  The  weight function $\mu_t$ depends on the choice of the transition function  \eqref{eq:Poisson}, \eqref{eq:simpleRandomWalk}, or \eqref{eq:geometricRandomWalk}, and we will do this step case by case in the discussion below.

Apart from  fixing the initial positions we  also specify the values of $\alpha_j$'s. While \eqref{jpdfexample3} was defined with distinct $\alpha_j$'s, we would like to consider the case where some or all are allowed to be equal, and for this purpose we give the following lemma.

\begin{lemma} \label{lem:schurtoMOP} Consider 
\begin{equation}\label{funalpha}\frac{\det\left( \alpha_j^{x_i}\right)_{i,j=1}^n}{ \det\left( \alpha_j^{\xi_i}\right)_{i,j=1}^n}\end{equation}
as a function of $\left(\alpha_j\right)_{j=1}^n$, with  distinct $\alpha_j$'s different from zero. The function \eqref{funalpha} extends to a continuous function for $\left(\alpha_j\right)_{j=1}^n\in (\mathbb R\setminus \{0\})^n$. When several values of $\alpha_j$ are the same, denote $\{\alpha_j\}_{j=1}^n=\{\gamma_\ell\}_{\ell=1}^m$  with  $\gamma_\ell$'s, and let $k_\ell=\#\{j:\alpha_j=\gamma_\ell \textrm{ for }j=1,\dots,n\}$. Then
\begin{equation}\label{limfunalpha} \frac{\det\left( \alpha_j^{x_i}\right)_{i,j=1}^n}{ \det\left( \alpha_j^{\xi_i}\right)_{i,j=1}^n}=\frac{ \det \left( g_j(x_i)\right)_{i,j=1}^n}{ \det\left( g_j(\xi_i)\right)_{i,j=1}^n}, \end{equation}
where the $g_j$'s are given by \eqref{eq:MOPEgj} with $w_j(x)=\gamma_j^x$. 
\end{lemma}
\begin{proof}
	The proof that we will give here is based on a standard procedure using l'H\^opital's rule. The point here is that determinants in \eqref{funalpha} are both zero when $\alpha_j=\alpha_k$ for some $j \neq k$. We therefore have to consider limits  $\alpha_j \to \alpha_k$. 
	
	First note that the ratio in \eqref{funalpha} is a symmetric function of $\alpha_j$'s (in fact, by \eqref{eq:Schurpol} it is a ratio of Schur polynomials and therefore a well-defined continuous function of $\left(\alpha_j\right)_{j=1}^n\in (\mathbb R\setminus \{0\})^n$ for distinct $x_j$ and distinct $\xi_j$). If all $\alpha_j$'s are distinct we may therefore assume that they are ordered by growth $\alpha_1 < \ldots< \alpha_n$. Also assume that $\gamma_\ell$'s are ordered by growth $\gamma_1<\ldots<\gamma_m$. 
 	
 	Now set $\alpha_1=\gamma_1$ and consider the limit $\alpha_2 \to \alpha_1$. As function of $\alpha_2$ the numerator and denominator have a simple zero at $\alpha_1$. Thus, by l'H\^opital's rule we find
 	$$
 	\lim_{\alpha_2 \to \alpha_1} \frac{\det\left( \alpha_j^{x_i}\right)_{i,j=1}^n}{ \det\left( \alpha_j^{\xi_i}\right)_{i,j=1}^n}=\lim_{\alpha_2 \to \alpha_1}\frac{\frac{\partial}{\partial \alpha_2 }\det\left( \alpha_j^{x_i}\right)_{i,j=1}^n}{ \frac{\partial}{\partial \alpha_2 } \det\left( \alpha_j^{\xi_i}\right)_{i,j=1}^n}
 	=\lim_{\alpha_2 \to \alpha_1}\frac{\alpha_2\frac{\partial}{\partial \alpha_2 }\det\left( \alpha_j^{x_i}\right)_{i,j=1}^n}{\alpha_2 \frac{\partial}{\partial \alpha_2 } \det\left( \alpha_j^{\xi_i}\right)_{i,j=1}^n}
 	$$
 	Now the numerator and denominator can computed using the rule
 	$$
 \lim_{\alpha_2 \to \alpha_1}\left(\alpha_2\frac{\partial}{\partial \alpha_2 }\right)\det\left( \alpha_j^{x_i}\right)_{i,j=1}^n=\det
 	\begin{pmatrix}
 	\vdots  & \vdots&\vdots&\cdots \\
 	\alpha_1^{x_i}&	x_i\alpha_1^{x_i}& \alpha_3^{x_i}& \cdots \\
 	\vdots  & \vdots & \vdots&\cdots 
 	\end{pmatrix}
 	$$
 	When we next take $\alpha_3\to \alpha_1$ then it is important to note that  $\alpha _1$ is a double zero of the determinant as a function of $\alpha_3$. We thus have to apply l'H\^opital's rule with the second derivative:  
 	$$\lim_{\alpha_3 \to \alpha_1} \lim_{\alpha_2 \to \alpha_1} \frac{\det\left( \alpha_j^{x_i}\right)_{i,j=1}^n}{ \det\left( \alpha_j^{\xi_i}\right)_{i,j=1}^n}=	\lim_{\alpha_3 \to \alpha_1} \lim_{\alpha_2 \to \alpha_1} \frac{ \left(\alpha_3 \frac{\partial}{\partial \alpha_3 } \right)^2\alpha_2\frac{\partial}{\partial \alpha_2 }\det\left( \alpha_j^{x_i}\right)_{i,j=1}^n}{\left(\alpha_3 \frac{\partial}{\partial \alpha_3 } \right)^2\alpha_2 \frac{\partial}{\partial \alpha_2 } \det\left( \alpha_j^{\xi_i}\right)_{i,j=1}^n}.
 	$$
 	And now 
 	$$¨
 	\lim_{\alpha_3 \to \alpha_1} \lim_{\alpha_2 \to \alpha_1}
 	\left(\alpha_3 \frac{\partial}{\partial \alpha_3 } \right)^2\left(\alpha_2\frac{\partial}{\partial \alpha_2 }\right)\det\left( \alpha_j^{x_i}\right)_{i,j=1}^n=\det \begin{pmatrix}
 	\vdots  & \vdots&\vdots  & \vdots&\cdots \\
 	\alpha_1^{x_i}&	x_j\alpha_1^{x_i}& x_j^2 \alpha_1^{x_i}& \alpha_4^{x_i}&\cdots \\
 	\vdots  & \vdots &\vdots  & \vdots& \cdots 
 	\end{pmatrix}.
 	$$
 	By iterating this procedure the statement follows. For $\alpha_j$ for $j=1, \ldots, k_1$ we apply l'H\^{o}pital's rule with using higher derivatives $(\alpha_j \partial/\partial \alpha_j)^{j-1}$  and this changes the $j$-th column into  $x_i^{j-1}\alpha_1^{x_i}$. Then we start over by setting $\alpha_{k_1+1}=\gamma_2$ and continue applying l'H\^{o}pital's rule for $\alpha_{k_2+2} \to \alpha_{k_1+1}$ and so on. 
 	\end{proof}

By the lemma, we thus see that  in  all three cases of \eqref{eq:Poisson}, \eqref{eq:simpleRandomWalk}, and \eqref{eq:geometricRandomWalk} the positions $x_j$  at time $t$ have the joint probability function  proportional to
\begin{equation} \label{jpdfexample300}
\det(x_i^{j-1})_{i,j=1}^n  \det \left( g_j(x_i)\right)_{i,j=1}^n\prod_{i=1}^n \mu_t(x_j), 
\end{equation}
where the $g_j$'s are given by \eqref{eq:MOPEgj} with $w_j(x)=\gamma_j^x$ and corresponding multiplicity $k_j$,  and thus   form  multiple orthogonal polynomial ensembles.

Our purpose is now to show that the CLT in Theorem \ref{thm:varying} applies when both $t \to \infty$ and $n \to \infty$. In other words, Theorem  \ref{thm:varying} can be used to provide information on  the long time behavior for large collections of non-colliding processes.
\subsubsection{Multiple Charlier Ensemble}
Consider $P_t(u)$ as in \eqref{eq:Poisson} and let us start the process at consecutive integers $\xi_j=j-1$, for $j=1, \ldots,n$. The heart of the matter is the fact that
$$
	P_t(x+1-j)=(-1)^{j+1} t^{-j+1} e^{- t} \frac{t^x}{x!} (-x)_{j-1}, \qquad x \geq 0,
$$
where $(a)_j$ stands for the Pochhamer symbol  $(a)_j=a(a+1)(a+2) \cdots (a+j-1)$. Thus, up to the factor $t^x/x!$, we see that $P_t(x+1-j)$ is a polynomial of degree $j-1$ in $x$. Therefore (after using some standard rules for determinants), we see that
	$$
		\det\left( P_t(x_i-j+1)\right)_{i,j=1}^n = c_n(t) \prod_{i=1}^n \frac{t^{x_i}}{x_i!} \det\left(x_i^{j-1}\right)_{i,j=1}^n,
	$$
for some  $c_n(t)$ independent of the $x_i$'s.
	
Concluding, we find that the joint density \eqref{jpdfexample300} at time $t$  indeed defines a MOPE on the non-negative integers, with $\vec k= (k_1, \ldots, k_m)$, where $|\vec k|=n$, and
	$$
		w_j(x)= \gamma_j^x, \quad j=1, \ldots,m, \qquad  \mu_t(x)= \frac{t^x}{x!}.
	$$
The multiple orthogonal polynomials are called the Multiple Charlier polynomials, denoted by $C_{\vec k}(x)$.
Observe that in the case $m=1$, these are indeed the classical Charlier polynomials.	The nearest neighbor recurrence coefficients for $C_{\vec k}$ have been computed explicitly in \cite{VanAssche11}. By taking the parameter $a_j$ in \cite{VanAssche11}  to be $a_j=t \gamma_j$ we find
$$
	x C_{\vec k}(x)=  C_{\vec k+ \vec e_\ell}(x)+ b_{\vec k, \ell} C_{\vec k}(x)+ \sum_{j=1}^ma_{\vec k,j}C_{\vec k-\vec e_j}(x).
$$		
where
$$
	a_{\vec k,j}= k_j t \gamma_j,  \qquad   	b_{\vec k, \ell}=   t  \gamma_\ell + |\vec k|.
	$$
To get a reasonable limit theorem, we need to rescale the time parameter and space parameters. The explanation of this is that after long time the particles will spread out over a large interval and the CLT applies after an appropriate rescaling of time and space. It turns out the correct rescaling is $t= n \tau$ and $x= n\xi $. By setting $\tilde C_{\vec k}(\xi)=n^{-|\vec k|} C_{\vec k}(n \xi)$ we obtain
$$
	\xi \tilde  C_{\vec k}(\xi)= \tilde  C_{\vec k+ \vec e_\ell}(\xi)+ \tilde b_{\vec k, \ell} \tilde C_{\vec k}(\xi)+ \sum_{j=1}^m\tilde a_{\vec k,j}\tilde C_{\vec k-\vec e_j}(\xi).
$$		
where
$$
\tilde a_{\vec k,j}=k_j  \tau \gamma_j/n  \qquad   	\tilde b_{\vec k, \ell}=  \tau \gamma_\ell + \frac{ |\vec k|}{n}.
$$
Now one readily verifies that the conditions of Theorem \ref{thm:varying} are satisfied. We get the CLT theorem~\ref{thm:varying} with  $a_j = \nu_j  \tau \gamma_j$ and $b_j =  \tau \gamma_j + 1$.

The multiple Charlier ensemble  with $m=2$ has appeared before (although  not explicitly mentioned). Indeed, in \cite{Duits1}, the dynamics on interlacing particles systems of \cite{BF} with two speeds was studied.  By taking the marginal density at the horizontal level with $n$ particles, we obtain exactly the process described here. The different coefficients $\gamma_1$ and $\gamma_2$ represent two different speeds in the evolution of the particle system. The main result of \cite{Duits1} was the computation of the long time behavior of  global fluctuation for the two dimensional system, which turns out to be described by the Gaussian Free Field. Observe that the proof of the fluctuations of the present paper avoids many of the technical details that one needs to overcome in the approach of \cite{Duits1}. In fact, in an effort to keep the number of technical details to a minimum, \cite{Duits1} deals with only two possible speeds, whereas here we can allow $m$ different values without much extra effort.

\subsubsection{Multiple Krawtchouk Ensemble}

Let us now consider the case \eqref{eq:simpleRandomWalk}. Now note that
$$\frac{\chi_{i-1 \leq x\leq t+i-1}(x)}{(x-i+1)!(t-x+i-1)!}= (-1)^{n-1} \frac{(-x)_{i-1} (-t-n+1+x)_{n-i}}{x!(t+n-1-x)!},$$
for $x=0,\ldots, t+n-1$.
This implies that

$$P_t(x_i+1-j)= (p/(1-p))^{x_i} \frac{q_j(x_i)}{x_i! (t+n-1-x_i)!},$$
for $x_i=0,\ldots,t+n-1$, where $q_j$ is a polynomial of degree $n-1$. In fact, all $q_1,\ldots, q_n$ can be shown to be  linearly independent and thus, after some  standard rules for determinants, we find
$$\det\left( P_t(x_i+1-j)\right)_{i,j=1}^n = c_n(p,t) \prod_{i=1}^n \frac{(p/(1-p))^{x_i}}{x_i! (t+n-1-x_i)!} \det\left(x_i^{j-1}\right)_{i,j=1}^n,$$
where $c_n(p,t)$ is a constant independent of the $x_i$'s.

	Concluding, we find that the process \eqref{jpdfexample3} at time $t$ is indeed a MOPE on $\{0,1,\ldots,t+n-1\}$, with $\vec k = (k_1, \ldots, k_m)$ where $|\vec k|=n$, and
$$w_j(x)= \gamma_j^x, \quad j=1, \ldots,m, \qquad  \mu_t(x)= \frac{(p/(1-p))^{x}}{x!(t+n-1-x)!}.$$
These are the weights for the Multiple Krawtchouk Polynomials $K_{\vec k}^{\vec p,t+n-1}$ that were studied in \cite{ACV,HV,Ismail}. Here $\vec p=(p_1,\ldots, p_m)$ where $p_j \in (0,1)$ is the unique solution to $\gamma_j p/(1-p)=p_j/(1-p_j)$.  Naturally, when $m=1$, these reduce to the standard Krawtchouk polynomials. The nearest neighbor recurrence relation reads (see \cite[Section 3.4]{HV})
\begin{equation}\label{eq:krNNR}
x K_{\vec k}^{\vec p,t+n-1}(x)=  K_{\vec k+ \vec e_\ell} ^{\vec p,t+n-1}(x)+ b_{\vec k, \ell} K_{\vec k}^{\vec p,t+n-1}(x)+ \sum_{j=1}^ m a_{\vec k,j}K_{\vec k-\vec e_j}^{\vec p,t+n-1}(x).
\end{equation}	
where
$$ a_{\vec k,j}=p_j(1-p_j) k_j(t+n-|\vec k|),$$
and
$$b_{\vec k,j}=(t+n-1-|\vec k|) p_j+ \sum_{\ell=1}^m k_\ell  (1-p_\ell).$$ After a rescaling $t = \lfloor n \tau \rfloor $  (with $\lfloor q \rfloor$ denoting the integer part of $q$), $x=yn$, and $\tilde K_{\vec k}^{\vec p,\tau}(y)= n^{-|\vec k|}K_{\vec k}^{\vec p,[n \tau]+n-1}(x)$, ~\eqref{eq:krNNR} becomes
$$
 y \tilde K_{\vec k}^{\vec p,\tau}(y)=  \tilde K_{\vec k+ \vec e_\ell} ^{\vec p,\tau}(y)+ \frac{ b_{\vec k, \ell}}{n} \tilde K_{\vec k}^{\vec p,\tau}(y)+ \sum_{j=1}^ m \frac{a_{\vec k,j}}{n^2} \tilde K_{\vec k-\vec e_j}^{\vec p,\tau}(y).
$$
 Now it is easy to see that the conditions \eqref{eq:multipleNevaivarying} are satisfied and Theorem~\ref{thm:varying} applies with $a_j = p_j(1-p_j)\nu_j \tau$ and $b_j = \tau p_j + \sum_{r=1}^m \nu_r(1-p_r)$.

 \subsubsection{Multiple Meixner ensemble}\label{ss:Meixner}
 The last  example that we will consider is that of the multiple Meixner polynomials where consider the Markov chain with $P_t$ as in \eqref{eq:geometricRandomWalk}. We will only consider times $t\geq n$ and comment on $1 \leq t <n$ in the end of this paragraph.

 First note that, for $t \geq n$, we have
 $$
 	\begin{pmatrix}
 	t+x-i\\t-1
 	\end{pmatrix}= \frac{(t-n)!}{(t-1)!}\frac{(t-n+1)_x }{x!} (t-n+x+1)_{n-i} (x-i+2)_{i-1}
 $$
 Hence
 $$P_t(x+1-j)=\frac{(t-n+1)_x  a^x}{x!} q_j(x),$$
 where $q_j$ is a polynomial of degree $n-1$. And therefore (again after matrix manipulations) we find
 $$\det\left( P_t(x_i+1-j)\right)_{i,j=1}^n = c_n(a,t) \prod_{i=1}^n \frac{(t-n+1)_{x_i}  a^{x_i}}{x_i!} \det\left(x_i^{j-1}\right)_{i,j=1}^n,$$
 for some $c_n(a,t)$ independent of the $x_i$'s.

 Concluding, we find that the process \eqref{jpdfexample3} at time $t$ is indeed a MOPE on $\{0,1,\ldots\}$, with $\vec k= (k_1, \ldots, k_m)$, and
 $$w_j(x)=\gamma_j^x, \qquad j=1,\ldots, m, \qquad\mu_t(x)= \frac{(t-n+1)_x a^x}{x!}.$$
 These are precisely the weights for the multiple Meixner polynomials $M^{(1)}_{\vec k}(x)$ of the first kind (there are two families of multiple Meixner polynomials). They satisfy the recurrence relations (\cite[Section 3.3]{HV})
\begin{equation}\label{eq:krNNR}
x M^{(1)}_{\vec k}(x)=  M_{\vec k+ \vec e_\ell} ^{(1)}(x)+ b_{\vec k, \ell} M_{\vec k}^{(1)}(x)+ \sum_{j=1}^ m a_{\vec k,j}M_{\vec k-\vec e_j}^{(1)}(x).
\end{equation}	
where
$$ a_{\vec k,j}= (t-n+|\vec k|)\frac{k_j a \gamma_j}{(1-a \gamma_j)^2},$$
and
$$b_{\vec k,j}=(t-n+1+|\vec k|)\frac{a \gamma_j}{1-a \gamma_j}+\sum_{i=1}^m \frac{k_i}{1-a\gamma_i}.$$

To get a reasonable limit we set $t=n\tau$ and scale $x=n \xi$. After setting $\tilde M^{(1)}_{\vec k}(\xi)=n^{-|\vec k|}  M_{\vec k}^{(1)}(n \xi)$ we obtain the recurrence
 \begin{equation}\label{eq:krNNR}
 \xi \tilde M^{(1)}_{\vec k}(\xi)=  \tilde  M_{\vec k+ \vec e_\ell} ^{(1)}(\xi)+ \frac{ b_{\vec k, \ell}}{n} \tilde M_{\vec k}^{(1)}(\xi)+ \sum_{j=1}^ m \frac{a_{\vec k,j}}{n^2}\tilde M_{\vec k-\vec e_j}^{(1)}(\xi).
 \end{equation}	
   Now it is easy to see that the conditions \eqref{eq:multipleNevaivarying} are satisfied and Theorem~\ref{thm:varying} applies with $$a_j = \frac{\tau \nu_j a \gamma_j }{(1-a\gamma_j)^2 }\quad \text{ and } \quad  b_j = \frac{\tau a \gamma_j}{1-a\gamma_j} + \sum_{i=1}^m \frac{\nu_i}{1-a\gamma_i}.$$

We end this example by commenting on the condition that $t \geq n$. Note that this condition does not enter in the previous examples on multiple Charlier and multiple Krawtchouk polynomials. The reason is not merely technical but has an interpretation. Recall that our state space is the Weyl chamber and we have $n$ processes that are non-colliding. One can verify by evaluating the determinant in  \eqref{jpdfexample3} with \eqref{eq:geometricRandomWalk} explicitly, that if at time $t$ the $j$-th process (counted from below) has location $x$ and the $(j+1)$-th process has location $x+1$, then the $j$-th process  will be blocked by the $(j+1)$-th process, i.e., will not be able to jump at time $t+1$. Therefore it will remain at location $x$ at time $t+1$. Since the processes start at $n$ consecutive initial points at $t=0$, the lowest process will only be able to move starting from $t=n$. At time $t\leq n$ only the top $t$ processes 
 have been able to move. This phenomenon does not occur in the previous two examples  where each process is able to move at all times $t >0$.

 Finally, note that for $t=n$, we have $\mu(x)=a^x$ and the MOPE is exactly the last example from the Introduction.

\subsubsection{Multi-time fluctuations}
In the three examples, we have only considered the distribution of the positions at a fixed time. It is also possible to look at the joint distribution at several times.

Let
$0=t_0< t_1 <t_2< \ldots <t_N$ and set $\Delta_r=t_r-t_{r-1}$ for $r=1,\ldots, N$. Denote the positions of the processes at time $t_r$ by $x_j^r$ for $r=1,\ldots,N$ and order them according to $x_1^r<\ldots <x_n^r$. Then the probability of having the processes go through the points $\{x_j^r\}_{j,r=1}^{n,N}$ at times $t_1,\ldots, t_m$ is proportional to
\begin{equation} \label{jpdfexample4}
 \det \left( P_{\Delta_1}(x_i^1-\xi_j)\right)_{i,j=1}^n \left(\prod_{r=2}^{N} \det \left(P_{\Delta_{r}}(x_i^{r}-x_j^{r-1})\right)_{i,j=1}^n \right)\frac{ \det\left( \alpha_j^{x_i^{N}}\right)_{i,j=1}^n }{\det\left( \alpha_j^{\xi_i}\right)_{i,j=1}^n},
\end{equation}
giving a product of $N+1$ terms. This raises the interesting question whether we can describe the multi-time fluctuations using multiple orthogonal polynomials. In \cite{Duits2} this was done for regular orthogonal polynomials (that is, $m=1$). In that paper it was proved that the multi-time fluctuations are governed by the two-dimensional Gaussian Free Field with Dirichlet boundary conditions using properties of the orthogonal polynomials and the two-dimensional extension of the methods introduced in \cite{BD}. We expect a similar statement to be true in our setting, perhaps by a similar argument as in \cite{Duits2}. We will not address this very interesting question in this paper,  but do intend to return to this topic in future work.

\subsubsection{Schur measure and process} \label{sec:Schur}
For completeness we describe how the dynamics of above is a special example of the Schur process and the MOPE are special examples of the Schur measure. 

Let $a=(a_1,a_2,\ldots) \in \mathbb R_{\geq 0}^\infty$, $b=(b_1,b_2,\ldots) \in \mathbb R_{\geq 0}^\infty$  and $c\geq 0$. Assume that $0 \leq a_j <1$ and 
$$\sum_{j=1}^\infty (a_j +b_j) < \infty.$$
Then, for $|z|<1$, define 
\begin{equation}\label{eq:Hz}
	H(z;a,b,c)=e^{cz} \prod_{j=1}^\infty\frac{1+b_jz}{1-a_jz} 
	\end{equation} and set
$$ H(z;a,b,c)=\sum_{k=0}^\infty h_k(a,b,c) z^k
$$
We also set $h_k=0$ for $k\leq 0$. Let $\lambda=(\lambda_1,\lambda_2,\ldots)$ be a partition. That is, $\lambda_j\in \mathbb Z_{\geq 0}$ and $\lambda_i\geq \lambda_j$ whenever $i \leq j$. Denote the length of the partition by $\ell(\lambda)=\max \{k \mid \lambda_k >0\}$ (we will always assume $\ell(\lambda)< \infty$).  Then the Schur function is defined by 

\begin{equation} \label{eq:JacobiTrudi1}
s_\lambda(a,b,c)=\det\left( h_{\lambda_j-j+i}(a,b,c)\right)_{ i,j=1}^{\ell(\lambda)}
\end{equation}
This is the Jacobi-Trudi formula for Schur functions.  If $b=0,c=0$ and $a=(a_1,\ldots,a_n,0,0,\ldots)$ then it can be shown that this equals the Schur polynomial  defined in  \eqref{eq:Schurpol} for $\ell(\lambda)\leq n$  and $0$ for $\ell(\lambda)>n.$

For two partitions $\lambda$ and $\mu$ we say that $\lambda \geq  \mu$ if and only if $\lambda_j \geq \mu_j$. The skew Schur function is defined as  

\begin{equation} \label{eq:JacobiTrudi2}
s_{\lambda / \mu}(a,b,c)= \det \left(h_{\lambda_j-j-\mu_i+ i}(a,b,c)\right)_{i,j=1}^{\ell(\lambda)},
\end{equation}
for $\lambda \geq \mu$ and $s_{\lambda / \mu}(a,b,c)=0$ otherwise. Note that if $\mu$ is the trivial partition (i.e. $\mu_j=0$) then $s_{ \lambda/\mu}=s_\lambda$.  Also, $s_{\lambda/ \mu}(0,0,0)=1$ if $\lambda=\mu$ and $s_{\lambda/ \mu}(0,0,0)=0$ otherwise.

To be precise, our definitions \eqref{eq:JacobiTrudi1} and \eqref{eq:JacobiTrudi2} are specializations of (skew) Schur functions. The (skew) Schur functions are elements of the algebra  $\Lambda$ of symmetric functions in infinitely many variables (for a proper definition of this algebra see \cite{BG2}).
The definitions above are the result of applying different  specializations to that algebra (a specialization is an algebra homomorphism between $\Lambda$ and $\mathbb C$). In fact, these are known as the Schur positive specializations since these are precisely all specializations such that $s_\lambda \geq 0$ for any~$\lambda.$ 

The Schur measure, introduced by Okounkov in \cite{OK}, is the probability measure on partitions $\lambda$ given by 
\begin{equation} \label{eq:Schurmeasure}
\mathcal P(\lambda)=\frac{1}{Z} s_{\lambda}(\rho_+) s_{\lambda}(\rho_-),
\end{equation}
where $Z_n$  is a normalizing constant and $\rho_\pm$ are of the form   $\rho_\pm=(a_\pm,b_\pm,c_\pm)\in \mathbb R_{\geq 0}^\infty \times  \mathbb R_{\geq 0}^\infty \times \mathbb R_{\geq 0}$.  Note that \eqref{eq:preSchurmeasure} is a special case for an appriate choice of $\rho_+$ and $\rho_-$. The Schur process,  introduced by Okounkov and Reshitikhin in \cite{OR},  is the probability measure on a pair of sequences of partitions $\{\lambda^{(j)}\}_{j=1}^N$ and 
$\{\mu^{(j)}\}_{j=1}^{N-1}$ such that $\lambda^{(j)} \geq \mu^{(j)} $ and $\lambda^{(j+1)}\geq \mu^{(j)}$ for $j=1,\ldots, N-1$ proportional to 
\begin{equation} \label{eq:Schurprocess}
s_{\lambda^{(1)}}(\rho^{(1)}_+) \left(  
\prod_{j=2}^{N}	s_{ \lambda^{(j-1)}/\mu^{(j-1)}}(\rho^{(j-1)}_-) 	s_{ \lambda^{(j)}/\mu^{(j-1)}}(\rho^{(j)}_+) \right) s_{\lambda^{(N)}}(\rho^{(N)}_-)
\end{equation}
where $\rho_\pm^{(j)}=(a_\pm^{(j)},b_\pm^{(j)},c_\pm^{(j)})\in \mathbb R_{\geq 0}^\infty \times  \mathbb R_{\geq 0}^\infty \times \mathbb R_{\geq 0}$ for $j=1,\ldots,N$. It follows from the Cauchy-Binet identity that the marginal densities  for each $\lambda^{(j)}$ or $\mu^{(j)}$ in the Schur process are Schur measures. For future reference, we recall that if $\rho_-^{(j)}=(0,0,0)$  then $\mu^{(j)}=\lambda^{(j)}$, for $j=1,\ldots , N-1$ (with probability one).

Coming back to the Markov chain, we show first that the transition probabilities can be rewritten using Schur function where the parameters depend on the choice \eqref{eq:Poisson}, \eqref{eq:simpleRandomWalk} or \eqref{eq:geometricRandomWalk}.
\begin{lemma}\label{lem:connectiontoschur}
	Set 
	$\lambda_j=x_{n-j+1}-n+j$ and $\mu_j=y_{n-j+1}-n+j$ for $j=1,\ldots, n$. \\
	Then, with $P_t$ as in \eqref{eq:Poisson}, we have
	$$\det \left(P_t(x_i-y_j)\right)_{i,j=1}^n =d_t s_{\lambda/\mu }(0,0,t),$$
	and $d_t$ is some constant. \\
	With $P_t$ as in \eqref{eq:simpleRandomWalk}, we have
	$$\det \left(P_t(x_i-y_j)\right)_{i,j=1}^n =d_t s_{\lambda/\mu }(0,b^t,0)$$
	where $b^t=\left(\underset{t}{\underbrace{\frac{p}{1-p},\ldots,\frac{p}{1-p}}},0,0,\ldots\right)$  and $d_t$ is some constant.  \\
	With $P_t$ as in \eqref{eq:geometricRandomWalk}, we have
	$$\det \left(P_t(x_i-y_j)\right)_{i,j=1}^n =d_t s_{\lambda/\mu }(a^t,0,0)$$
	where $a^t=(\underset{t}{\underbrace{a,\ldots,a}},0,0,\ldots)$ and $d_t$ is some constant.  
\end{lemma}
\begin{proof}
	First note that in the new variables (after flipping the order of the rows of columns)
	$$\det \left(P_t(x_i-y_j)\right)_{i,j=1}^n= \det \left(P_t(\lambda_{i}-i-\mu_j+j)\right)_{i,j=1}^n$$
	By the Jacobi-Trudi formula \eqref{eq:JacobiTrudi2}, we  thus need to show that $$P_t(k)=c_{t,a,b,c}h_k(a,b,c),$$ where $a,b$ and $c$ are as indicated in the lemma for the three different situations and $c_{t,a,b,c}$ is a constant independent of $k$.  That means we need to verify that 
	$$\sum_{k=0}^\infty P_t(k)z^k=c_{t,a,b,c} H(z;a,b,c).$$
	In each of the three cases these are some elementary Taylor series leading to 
	$$\sum_{k=0}^\infty P_t(k)z^k= \begin{cases}
	e^{t(z-1)}, & \text{ for \eqref{eq:Poisson}} \\
	(1-p+p z)^t, & \text{ for  \eqref{eq:simpleRandomWalk}} \\
	\left(\frac{1-a}{1-az}\right)^t, & \text{ for \eqref{eq:geometricRandomWalk}}.\end{cases}
	$$
	Comparing this with \eqref{eq:Hz} gives the statement.
\end{proof}
Using this lemma we see that probability measure \eqref{jpdfexample4} can be written as a product of Schur functions (with properly chosen parameters). Now note that in the change of variables the initial condition $\xi_i=i-1$ is mapped to the empty partition. Thus, by Lemma \ref{lem:connectiontoschur} and \eqref{eq:Schurpol} we see that we can write the multi-time probability function  \eqref{jpdfexample4} as a probability density function on sequences of partitions $\lambda^{(1)}  \leq \lambda^{(2)} \leq \ldots \leq \lambda^{(N)}$, where $\lambda^{(r)}_j=x^r_{n-j+1}-n+j$,
proportional to 
$$s_{\lambda^{(1)}} (\rho_{\Delta_1})\left(\prod_{r=2}^N s_{\lambda^{(r)}/\lambda^{(r-1)}} (\rho_{\Delta_r})\right) s_{\lambda^{(N)}}(\alpha,0,0).$$
where $ \alpha=(\alpha_1,\alpha_2,\ldots,\alpha_N,0,\ldots)$ and $\rho_{\Delta_r}$ are the parameters indicated in Lemma \ref{lem:connectiontoschur}.  By comparing this to \eqref{eq:Schurprocess} we see that  this is a special case of the Schur process  where $\rho_+^{(j)}=\rho_{\Delta_j}$ for $j=1, \ldots, N$,  $\rho_-^{(j)}=(0,0,0)$ (and hence $\mu^{(j)}=\lambda^{(j)}$) for $j=1,\ldots, N-1$, and $\rho_-^{(N)}= (\alpha,0,0)$. For $N=1$  the marginal distribution of $\lambda^{(1)}$ is proportional to
$$s_{\lambda^{(1)}} (\rho_{\Delta_1}) s_{\lambda^{(1)}}(\alpha,0,0), $$  
and is a special case of Schur measure \cite{OK}. 
We recall that the three classical discrete MOPE's discussed above arise as the fixed time marginal distribution when  setting $\alpha_j \to q_k$ for $q_1,\ldots,q_m$ with corresponding multiplicities $k_1,\ldots, k_m$, which shows that these MOPE's are particular specializations of the Schur measure.


\begin{thebibliography}{99}

	
		
		\bibitem{A} A.I. Aptekarev, \emph{Multiple orthogonal polynomials}, J. Comput. Appl. Math. 99 (1998), no. 1–2, 423–447.
			\bibitem{ADY} A. I. Aptekarev, S. A. Denisov, M. L. Yatsselev,
		\emph{Self-adjoint Jacobi matrices on trees and multiple orthogonal polynomials}, to appear in Transactions of the AMS, arXiv:1806.10531v1.
		\bibitem{ADV} A.I. Aptekarev, M. Derevyagin, W. Van Assche, \emph{Discrete integrable systems generated by Hermite-Pad\'e}, Nonlinearity 29 (2016), 1487--1506.
		
	
		\bibitem{AptKoz} A. I. Aptekarev, R. Kozhan, \emph{Differential equations for the recurrence coefficients limits for multiple orthogonal polynomials from a Nevai class}, preprint.


			\bibitem{ACV}J. Arvesu, J. Coussement, and W. Van Assche, \emph{Some discrete multiple orthogonal polynomials}, J. Comput. Appl. Math. 153 (2003), no  1–2,  19--45.
		\bibitem{BBP} J. Baik, G. Ben Arous, and S. P\'ech\'e, \emph{Phase transition of the largest eigenvalue for non-null complex sample covariance matrices}, Ann. Probab.
	33, (2005) no. 5, 1643-1697.
		\bibitem{BK} P.M. Bleher, A.B.J. Kuijlaars, \textit{Integral representations
		for multiple Hermite and multiple Laguerre polynomials}, Ann. Inst. Fourier (Grenoble) 55 (2005), no. 6, 2001–2014.
		
		\bibitem{BK2} P.M. Bleher and A.B.J. Kuijlaars, \emph{Random matrices with external source and multiple orthogonal polynomials,} Internat. Math. Research Notices 2004 (2004), 109–129.
		
		 \bibitem{BF2} A. Bonfiglioli and R. Fulci, Topics in noncommutative algebra. The theorem of Campbell, Baker, Hausdorff and Dynkin. Lecture Notes in Mathematics, 2034. Springer, Heidelberg, 2012.
		
		\bibitem{Bbio}  A. Borodin, \emph{Biorthogonal Ensembles},  Nuclear Phys. B 536 (1998), no. 3, 704--732.
		\bibitem{BF} A. Borodin and P. Ferrari, \emph{Anisotropic growth of random surfaces in 2+1 dimensions}, Comm. Math. Phys. 325 (2014), no. 2, 603–684.
		
		\bibitem{BG2} A. Borodin and V. Gorin, \emph{Lectures on integrable probability}, arXiv:1212.3351
		
		\bibitem{BGG} A. Borodin, V. Gorin, A. Guionnet, \emph{Gaussian asymptotics of discrete $\beta$–ensembles},  Publications math\'ematiques de l’IH\'ES 125,  no. 1 (2017), 1--78.
		
		\bibitem{BD} J. Breuer and M. Duits,   \emph{Central Limit Theorems for biorthogonal ensembles and asymptotics of recurrence coefficients,} J. Amer. Math. Soc. 30 (2017) No. 1, 27--66.
		\bibitem{BDmeso}  J. Breuer and M. Duits, \emph{Universality of mesoscopic fluctuations in orthogonal polynomial ensembles}, Comm. Math. Phys. 342 (2016), no. 2, 491--531.
	
		
		\bibitem{BO} J. Breuer and D. Ofner, \emph{Mesoscopic universality for orthogonal polynomial ensembles on the unit circle}, in preparation.
		
		\bibitem{BH} E. Br\'ezin and S. Hikami, \emph{Correlations of nearby levels induced by a random potential,} Nucl. Phys., B 479 (1996), 697–706.
		
		\bibitem{BG} A. Bufetov and V. Gorin, \emph{Fluctuations of particle systems determined by Schur generating functions}, Adv. Math 338 (2018), 702-–781.
		
		\bibitem{DKMVZ} P. Deift, T. Kriecherbauer, K.T.-R. McLaughlin, S. Venakides, and X. Zhou, \emph{Uniform asymptotics for polynomials orthogonal with respect to varying exponential weights and applications to universality questions in random matrix theory}, Comm. Pure Appl. Math. 52 (1999), 1335--1425.
		
		\bibitem{Duits1} M. Duits, \emph{The Gaussian free field in an interacting particle system with two jump rates}, Comm. Pure and Appl. Math. 66 (2013), no. 4, 600--643.
		
		\bibitem{Duits2} M. Duits, \emph{On global fluctuations for non-colliding processes}, Ann. of Prob. 2018, Vol. 46, No. 3, 1279–1350.
		
		\bibitem{DK} M. Duits and R. Kozhan,  \emph{Relative Szeg\H{o} Asymptotics for Toeplitz determinants,} accepted for publication in International Mathematics Research Notices
		
	\bibitem{DKM} M. Duits, A.B.J. Kuijlaars and M. Mo, \emph{The Hermitian two matrix model with an even quartic potential}, Mem. Amer. Math. Soc. 217 (2012), no. 1022, 105 pp.
	
\bibitem{Dynkin}  E. Dynkin, \emph{Calculation of the coefficients in the Campbell--Hausdorff formula}  (in Russian), Doklady Akad. Nauk SSSR 57 (1947), pp. 323--326.

		\bibitem{HV} M. Haneczok and W. Van Assche, \emph{Interlacing properties of zeros of multiple orthogonal polynomials}, J. of Math. Analysis and Appl.,
		Volume 389, Issue 1, 1 May 2012, Pages 429--438
		
	\bibitem{Hardy} A. Hardy, \emph{Average characteristic polynomials of determinantal point processes}, Ann. Inst. H. Poincaré Probab. Statist.
	51,  no. 1 (2015), 283--303.
	
		\bibitem{Jdet} K. Johansson, \emph{Random matrices and determinantal processes}, Mathe- matical Statistical Physics, Elsevier B.V. Amsterdam (2006) 1--55.
		
		\bibitem{Jduke} K. Johansson, \emph{On fluctuations of eigenvalues of random Hermitian matrices}, Duke Math. J. 91 (1998), no. 1, 151--204.
		
\bibitem{JL} K. Johansson, \emph{Edge fluctuations of limit shapes.} In: Current Developments in Mathematics
2016, D. Jerison et al. (eds.), Int. Press, Somerville, MA, 47–110 (2018)
		
		\bibitem{jonsson} D. Jonsson, \emph{Some limit theorems for the eigenvalues of a sample covariance matrix}, J. Multivariate Anal. 12 (1982), no.1,  1--38.
		
		
		\bibitem{Kmop} A.B.J. Kuijlaars,  \emph{Multiple orthogonal polynomial ensembles}, in ‘Recent trends in orthogonal polynomials and approximation theory’, Contemp. Math. 507, Amer. Math. Soc., Providence, RI, 2010, pp. 155--176.
	
		\bibitem{KMcL} A.B.J. Kuijlaars and K. McLaughlin, \emph{A Riemann-Hilbert problem for biorthogonal polynomials}, J. Comput. Appl. Math. 178 (2005), 313–-320.
	
		\bibitem{KM} A.B.J. Kuijlaars and L. Molag, \emph{The local universality of Muttalib-Borodin biorthogonal ensembles with parameter $\theta=\frac12$}, Nonlinearity 32 (2019), no. 8.
		\bibitem{KZ} A.B.J. Kuijlaars and L. Zhang, \emph{Singular Values of Products of Ginibre Random Matrices, Multiple Orthogonal Polynomials and Hard Edge Scaling Limits}, Comm.  Math. Phys.,
332  (2014), no. 2,  759--781.
		
		\bibitem{konig} W. K\"onig, \emph{Orthogonal polynomial ensembles in probability theory}, Probab. Surveys 2 (2005), 385--447.
		
		\bibitem{L} G. Lambert, \emph{CLT for biorthogonal ensembles and related combinatorial identities}, Adv.  in Math. 329 (2018) 590--648.
		
		\bibitem{MacDonald}  I. G. Macdonald, Symmetric functions and Hall polynomials. Second edition. Oxford Mathematical Monographs. Oxford Science Publications. The Clarendon Press, Oxford University Press, New York, 1995. x+475 pp.
		
\bibitem{NV} F. Ndayiragije and W. Van Assche, \emph{Multiple Meixner polynomials and non-Hermitian oscillator Hamiltonians}, J.
Phys. A, 46(50):505201, 17, 2013.

			\bibitem{NS} E.M. Nikishin, V.N. Sorokin, Rational Approximations and Orthogonality, in: Translations of Mathematical
			Monographs, vol. 92, Amer. Math. Soc., Providence, RI, 1991.
			\bibitem{OK} A. Okounkov,
			\emph{Infinite wedge and random partitions},
			Selecta Math. 7 (2001), pp. 57--81.
			
		 \bibitem{OR}	A. Okounkov, N. Reshetikhin,\emph{ Correlation functions of Schur process with application to local geometry of a random 3-dimensional Young diagram.}Journal of American Mathematical Society, 16 (2003), 581-–603.
			\bibitem{VAGK} W. Van Assche, J. Geronimo and A.B.J. Kuijlaars, \emph{Riemann-Hilbert problems for multiple orthogonal polynomials}. In: Bustoz J., Ismail M.E.H., Suslov S.K. (eds) Special Functions 2000: Current Perspective and Future Directions. NATO Science Series (Series II: Mathematics, Physics and Chemistry), vol 30. Springer, Dordrecht
	\bibitem{VanAssche11}
	W. Van Assche,
	\emph {Nearest neighbor recurrence relations for multiple
		orthogonal polynomials}, J. of Approx. Theory 163 (2011), 1427--1448.
	
		
		\bibitem{V3} W. Van Assche, \emph{Orthogonal and multiple orthogonal polynomials, random matrices, and Painlevé equations}, arXiv:1904.07518
		
			\bibitem{Ismail}
		W. Van Assche, \newblock{Chapter 23}, in Classical and Quantuum Orthogonal Polynomials in One Variable (by M.E.H. Ismail),
		Volume 98 of Encyclopedia of Mathematics and its Applications. Cambridge University Press, 2005.

		

	\end{thebibliography}
\end{document}